\newtheorem{theorem}[subsection]{Theorem}
\newtheorem*{theorem*}{Theorem}
\newtheorem{lemma}[subsection]{Lemma}
\newtheorem{proposition}[subsection]{Proposition}
\newtheorem{corollary}[subsection]{Corollary}
\newtheorem*{conjecture*}{Conjecture}
\newtheorem*{question*}{Question}
\theoremstyle{remark}
\newtheorem{remark}[subsection]{Remark}
\newtheorem{example}[subsection]{Example}
\theoremstyle{definition}
\newtheorem{definition}[subsection]{Definition}
\newtheorem*{notation*}{Notation}
\newcommand{\opname}[1]{\operatorname{\mathsf{#1}}}
\renewcommand{\mod}{\opname{mod}\nolimits}
\newcommand{\add}{\opname{add}\nolimits}
\newcommand{\rad}{\opname{rad}\nolimits}
\newcommand{\ind}{\opname{ind}}
\newcommand{\Hom}{\opname{Hom}}
\newcommand{\End}{\opname{End}}
\newcommand{\Ext}{\opname{Ext}}
\newcommand{\ten}{\otimes}
\newcommand{\tilt}{\opname{tilt}}
\newcommand{\tors}{\opname{tors}}
\numberwithin{equation}{section}
\begin{document}

\title[]{Gluing support $\tau$-tilting modules via symmetric ladders of height 2}

\author{Yingying Zhang}

\thanks{MSC2020: 16G10, 16S50, 18A40, 18E10}
\thanks{Key words: support $\tau$-tilting module, torsion class, semibrick, ladder, triangular matrix algebra, maximal green sequence}
\address{Department of Mathematics, Huzhou University, Huzhou 313000, Zhejiang Province, P.R.China}

\email{yyzhang@zjhu.edu.cn}

\begin{abstract}
Gluing techniques with respect to a recollement have long been studied. Recently, ladders of recollements of abelian categories were introduced as important tools. We present explicit constructions of gluing of support $\tau$-tilting modules via symmetric ladders of height two. Moreover, we apply the result to triangular matrix algebras to give a more detailed version of the known Jasso's reduction and study maximal green sequences.
\end{abstract}

 \maketitle

\section{Introduction}\label{s:introduction}
\medskip

In representation theory of finite-dimensional algebras, the notion of tilting modules is a fundamental tool that aims at comparing the representation theory of an algebra with that of the endomorphism algebra of a tilting module over that algebra. Also tilting theory is a rich source of derived equivalences. As a generalization of tilting modules, Adachi-Iyama-Reiten introduced the notion of support $\tau$-tilting modules by using the Auslander-Reiten translation $\tau$ which completes tilting theory from the viewpoint of mutation [AIR].

Adachi-Iyama-Reiten established in [AIR] a bijection between functorially finite torsion classes and support $\tau$-tilting modules. This bijection will be central in our approach. Recently, Asai investigated semibricks from the point of view of $\tau$-tilting theory and showed that left finite semibricks also correspond bijectively to functorially finite torsion classes and support $\tau$-tilting modules [As]. Moreover, support $\tau$-tilting modules also have close relation to some other important notions, such as silting complexes, cluster-tilting objects, wide subcategories and maximal green sequences, see [As, AIR, BST, DIJ, DIRRT]. Support $\tau$-tilting modules play, thus, a very important role.

A recollement of abelian categories is a short exact sequence of abelian categories where both the inclusion and the quotient functors admit left and right adjoints. They first appeared in the construction of the category of perverse sheaves on a singular space by Beilinson, Bernstein and Deligne [BBD], arising from recollements of triangulated categories. Gluing techniques with respect to a recollement, due to Beilinson, Bernstein and Deligne have been intensively studied in [BBD] for t-structures and simple modules. Gluing co-t-structures was studied in [B]. Liu-Vit\'oria-Yang discussed gluing of silting objects via a recollement of bounded derived categories of finite-dimensional algebras over a field with respect to the gluing of co-t-structures [LVY], recently, in [SZ] with respect to the gluing of t-structures. Gluing torsion classes was studied by Liu-Vit\'oria-Yang in [LVY] and by Ma-Huang in [MH]. This leads to the natural question of how to glue support $\tau$-tilting modules and which support $\tau$-tilting modules are glued from smaller ones. Indeed, recent work by Zhang has given explicit constructions of gluing semibricks similar to gluing simple modules and then showed that, in the $\tau$-tilting finite case, we can construct support $\tau$-tilting modules by gluing semibricks via recollements [Zh]. However, the construction of support $\tau$-tilting modules there is less explicit. It turns out that an answer to the problem of gluing support $\tau$-tilting modules can be given more explicitly when the focus is on functorially finite torsion classes in symmetric ladders of height 2 rather than semibricks in recollements. Our first main result is as follows.
\begin{theorem}{\rm(Theorem 4.4)}
Let $A, B, C$ be finite-dimensional algebras over a field and $\mathcal {R}$ a symmetric ladder of $\mod A$ relative to $\mod B$ and $\mod C$ of height $2$ of the form
\[
\xymatrix@C=5pc{\mod B
\ar@<+1ex>[r]|{i_{0}} \ar@<-3ex>[r]|{i_{-2}} &\mod A \ar@<1ex>[l]|{i_{-1}}
\ar@<-3ex>[l]|{i_{1}}
 \ar@<+1ex>[r]|{j_{0}} \ar@<-3ex>[r]|{j_{-2}} & \mod C.\ar@<1ex>[l]|{j_{-1}} \ar@<-3ex>[l]|{j_{1}}}
\]
If $X$ and $Y$ are respectively support $\tau$-tilting modules in $\opname{mod}B$ and $\opname{mod}C$, with the corresponding functorially finite torsion classes  $\mathcal{T}_{X}$ and $\mathcal{T}_{Y}$, then the induced functorially finite torsion class $\mathcal{T}_{Z}=\{Z\in \mod A|j_{0}(Z)\in\mathcal {T}_{Y}, i_{-1}(Z)\in\mathcal {T}_{X}\}$ in $\mod A$ is associated with the support $\tau$-tilting module $Z=i_{0}(X)\oplus K_{Y}$, where $K_{Y}$ is defined by the minimal left $\mathcal{T}_{Z}$-approximation $f: j_{1}(Y)\longrightarrow K_{Y}$.
\end{theorem}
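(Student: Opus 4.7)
I would use the Adachi--Iyama--Reiten bijection between functorially finite torsion classes and support $\tau$-tilting modules. By the earlier results of the paper, $\mathcal{T}_{Z}$ is a functorially finite torsion class in $\opname{mod} A$, so there is a unique support $\tau$-tilting module $P(\mathcal{T}_{Z})$ with $\mathrm{Fac}(P(\mathcal{T}_{Z})) = \mathcal{T}_{Z}$, characterized as the additive generator of the Ext-projective indecomposables of $\mathcal{T}_{Z}$. The goal is to identify $P(\mathcal{T}_{Z})$ with $Z := i_{0} X \oplus K_{Y}$, which reduces to three checks: (i) $Z \in \mathcal{T}_{Z}$, (ii) $Z$ is Ext-projective in $\mathcal{T}_{Z}$, and (iii) $\mathrm{Fac}(Z) = \mathcal{T}_{Z}$. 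Point (i) is immediate from the recollement identities $j_{0} i_{0} = 0$ and $i_{-1} i_{0} \cong \mathrm{id}$, which give $j_{0}(i_{0} X) = 0 \in \mathcal{T}_{Y}$ and $i_{-1}(i_{0} X) \cong X \in \mathcal{T}_{X}$, while $K_{Y} \in \mathcal{T}_{Z}$ holds by the very definition of the left $\mathcal{T}_{Z}$-approximation.

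\textbf{Ext-projectivity (the central step).} For $K_{Y}$, consider any short exact sequence $0 \to T \to E \xrightarrow{p} K_{Y} \to 0$ with $T \in \mathcal{T}_{Z}$. Applying $\Hom_{A}(j_{1}Y, -)$ and using that $j_{1}$ is exact in a height-$2$ ladder, adjunction yields $\Ext^{1}_{A}(j_{1}Y, T) \cong \Ext^{1}_{C}(Y, j_{0} T) = 0$, the vanishing coming from Ext-projectivity of $Y$ in $\mathcal{T}_{Y}$ and $j_{0} T \in \mathcal{T}_{Y}$. Hence $f: j_{1}Y \to K_{Y}$ lifts to some $\tilde f: j_{1}Y \to E$ with $p\tilde f = f$; the universal property of $f$ as a left $\mathcal{T}_{Z}$-approximation then produces $s: K_{Y} \to E$ with $sf = \tilde f$. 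Now $(ps)f = f$, and the left minimality of $f$ forces $ps \in \End(K_{Y})$ to be an automorphism, so the sequence splits. An analogous argument handles $i_{0}X$: the height-$2$ symmetric ladder makes $i_{1}$ exact, so that $\Ext^{1}_{A}(i_{0}X, T) \cong \Ext^{1}_{B}(X, i_{1} T)$; the ladder symmetry further ensures $i_{1} T \in \mathcal{T}_{X}$ for $T \in \mathcal{T}_{Z}$, whereupon Ext-projectivity of $X$ in $\mathcal{T}_{X}$ gives the vanishing.

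\textbf{Generation, and main obstacle.} For $W \in \mathcal{T}_{Z}$, I would use the canonical sequence $j_{1}j_{0}W \xrightarrow{\epsilon_{W}} W \xrightarrow{\eta_{W}} i_{0}i_{-1}W \to 0$ arising from the adjunctions $j_{1} \dashv j_{0}$ and $i_{-1} \dashv i_{0}$. Surjections $Y^{n} \twoheadrightarrow j_{0}W$ and $X^{m} \twoheadrightarrow i_{-1}W$ (coming from $j_{0}W \in \mathrm{Fac}(Y)$ and $i_{-1}W \in \mathrm{Fac}(X)$) become, after applying the right-exact $j_{1}$ and the exact $i_{0}$, surjections $j_{1}Y^{n} \twoheadrightarrow j_{1}j_{0}W$ and $i_{0}X^{m} \twoheadrightarrow i_{0}i_{-1}W$. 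Composing the first with $\epsilon_{W}$ and factoring through $f$ by the approximation property yields $g: K_{Y}^{n} \to W$ with $\im g = \im \epsilon_{W}$; the Ext-projectivity of $i_{0}X$ (from the previous step) lifts the second along $\eta_{W}$ to a map $h: i_{0}X^{m} \to W$, and $(g, h): K_{Y}^{n} \oplus i_{0}X^{m} \twoheadrightarrow W$ is the desired surjection. The principal obstacle throughout is the Ext-projectivity of $i_{0}X$: it hinges on both the exactness of $i_{1}$ and the compatibility $i_{1}(\mathcal{T}_{Z}) \subseteq \mathcal{T}_{X}$, and these are precisely where the symmetric ladder of height $2$ hypothesis is used essentially.
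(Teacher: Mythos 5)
Your overall architecture is the same as the paper's (show $Z\in\mathcal{T}_Z$, show $Z$ is Ext-projective in $\mathcal{T}_Z$ using adjunction plus Ext-projectivity of $X$ and $Y$ downstairs, then show $\mathcal{T}_Z\subseteq\opname{Fac}Z$ via the canonical two-step filtration), and your splitting argument for $K_Y$ via left minimality of $f$ is a clean version of the paper's pushout computation. However, the write-up rests on three concretely false statements about the ladder functors. First, $j_1$ is \emph{not} exact: it is the $j_!$ of the upper recollement, has only a right adjoint, and is merely right exact (for the triangular matrix algebra, $j_1Y=(Y\otimes_CM,Y)$, not exact in $Y$ unless $M$ is flat). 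So the claimed isomorphism $\Ext^1_A(j_1Y,T)\cong\Ext^1_C(Y,j_0T)$ is unjustified. What you actually need, and what is true, is only an injection $\Ext^1_A(j_1Y,T)\hookrightarrow\Ext^1_C(Y,j_0T)$: apply the exact $j_0$ to an extension of $j_1Y$ by $T$ and pull back along the unit $Y\to j_0j_1Y$; a splitting downstairs transports back by adjunction. (The paper instead argues via the injective envelope of $T$; either repair works, but some repair is needed.) Second, for $i_0X$ you invoke the wrong adjoint: the right adjoint of $i_0$ is $i_{-1}$, not $i_1$, so the correct statement is $\Ext^1_A(i_0X,T)\cong\Ext^1_B(X,i_{-1}T)$, where $i_{-1}$ is exact precisely because the height-$2$ hypothesis gives it a right adjoint $i_{-2}$, and $i_{-1}T\in\mathcal{T}_X$ is part of the \emph{definition} of $\mathcal{T}_Z$ (no "ladder symmetry" argument is needed). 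As written, $i_1$ is only right exact and there is no reason for $i_1T\in\mathcal{T}_X$.

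Third, and most seriously, your canonical sequence $j_1j_0W\to W\to i_0i_{-1}W\to 0$ does not exist: the cokernel of the counit $j_1j_0W\to W$ is $i_0i_1W$, and since $i_{-1}$ is \emph{right} adjoint to $i_0$ there is no natural map $W\to i_0i_{-1}W$ at all. The sequence you should use is $0\to i_0i_{-1}W\to W\to j_{-1}j_0W\to 0$ (exact on the right because $i_{-1}$ is exact), and this flips the roles in your generation step: the subobject $i_0i_{-1}W$ is covered directly by $i_0X^{\oplus m}$ with no lifting required, whereas it is the cover of the \emph{quotient} $j_{-1}j_0W$ — obtained by factoring $j_1Y^{\oplus n}\twoheadrightarrow j_{-1}j_0W$ through the approximation $f$ — that must be lifted along $W\to j_{-1}j_0W$, using $\Ext^1_A(K_Y,i_0i_{-1}W)=0$ rather than Ext-projectivity of $i_0X$. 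So your diagnosis of "the principal obstacle" points at the wrong summand. Each of these defects is repairable, but the repairs are exactly the careful functor bookkeeping that constitutes the paper's proof, so as it stands the argument has genuine gaps.
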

We refer to the process from $X$ and $Y$ to $Z$ in Theorem 1.1 as \emph{gluing support $\tau$-tilting modules via symmetric ladders of height 2}. Actually by [GKP, Propositions 3.1 and 3.2] symmetric ladders of module categories of height 2 only can exist for triangular matrix algebras. Furthermore, the construction of support $\tau$-tilting  modules over triangular matrix algebras has been discussed in [GH].  We apply the above result to triangular matrix algebras and show that the construction in [GH] is a particular case of the following construction. Let $B$ and $C$ be finite-dimensional algebras over a field and $M$ a finitely generated $C$-$B$-bimodule. Let $A=\left(\begin{array}{cc} B & 0\\ {}_CM_B & C\end{array}\right)$ be the triangular matrix algebra. The following is our main theorem concerning triangular matrix algebras.
\begin{theorem}{\rm(Theorem 5.1)}
If $X$ and $Y$ are respectively support $\tau$-tilting modules in $\opname{mod}B$ and $\opname{mod}C$, then $(X, 0)\oplus (X_{Y}, Y)_{f}$ is a support $\tau$-tilting $A$-module, where $X_{Y}$ and $f$ are defined by the minimal left $\opname{Fac}X$-approximation $f: Y\otimes _{C}M\longrightarrow X_{Y}$.
\end{theorem}

Here we give a more detailed version of the known Jasso's reduction in the setting of triangular matrix algebras.
\begin{theorem}{\rm(Theorem 5.6)}
If $X$ is a $\tau$-tilting $B$-module, we have an order-preserving bijection between the set of isomorphism classes of basic support $\tau$-tilting $A$-modules which have $(X, 0)$ as a direct summand and the set of isomorphism classes of all basic support $\tau$-tilting $C$-modules as follows
$$\mathrm{red}: \opname{s\tau-tilt\,}_{(X, 0)}A
\longleftrightarrow \opname{s\tau-tilt\,}C
$$
given by $Z=(R, S)_{f} \mapsto S$ with inverse $Y \mapsto (X, 0)\oplus (X_{Y}, Y)_{f}$ where $X_{Y}$ and $f$ are defined by the minimal left $\opname{Fac}X$-approximation $f: Y\otimes _{C}M\longrightarrow X_{Y}$. In particular, $\opname{s\tau-tilt\,}C$ can be embedded as an interval in $\opname{s\tau-tilt\,}A$.
\end{theorem}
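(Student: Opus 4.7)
The plan is to derive this refined Jasso-type reduction from Theorem 5.1 combined with the Adachi--Iyama--Reiten bijection between support $\tau$-tilting modules and functorially finite torsion classes, together with a uniqueness argument for the $B$-component that is forced by $X$ being $\tau$-tilting (not merely support $\tau$-tilting).

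First, I would check that both maps are well-defined. For $\mathrm{red}^{-1}$: given $Y \in \opname{s\tau-tilt\,} C$, since $X$ is $\tau$-tilting and hence support $\tau$-tilting over $B$, Theorem 5.1 produces a support $\tau$-tilting $A$-module $(X, 0) \oplus (X_Y, Y)_f$, which manifestly has $(X, 0)$ as a direct summand. Basicness follows by taking $Y$ basic and observing that the $\tau$-tilting maximality of $X$ over $B$ prevents $(X_Y, Y)_f$ from sharing an indecomposable summand with $(X, 0)$. For $\mathrm{red}$: given a basic $Z = (R, S)_f \in \opname{s\tau-tilt\,}_{(X, 0)} A$, writing $R = X \oplus R'$, I would invoke Jasso's reduction at the $\tau$-rigid module $(X, 0)$; by the triangular structure of $A$ together with the $\tau$-tilting property of $X$, the $\tau$-perpendicular category of $(X, 0)$ in $\mod A$ is equivalent to $\mod C$, so $S$ is a support $\tau$-tilting $C$-module.

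Second, for the inverse property, $\mathrm{red} \circ \mathrm{red}^{-1} = \id$ is immediate from the construction. For $\mathrm{red}^{-1} \circ \mathrm{red}$, starting with $Z = (X \oplus R', S)_f$ basic support $\tau$-tilting, I would apply Theorem 5.1 to the pair $(X, S)$ to obtain $(X, 0) \oplus (X_S, S)_{f'} \in \opname{s\tau-tilt\,}_{(X, 0)} A$ with the same $C$-component $S$; Jasso's bijection then forces this module to coincide with $Z$, hence $R' = X_S$ and the structure maps agree. For the order-preserving and interval claims, I would then translate everything into functorially finite torsion classes via Adachi--Iyama--Reiten. By Theorem 4.4 the glued torsion class is $\mathcal{T}_Z = \{W \in \mod A \mid j_0 W \in \mathcal{T}_Y,\; i_{-1} W \in \mathcal{T}_X\}$; since the $B$-slot $\mathcal{T}_X = \opname{Fac} X$ is fixed, the inclusion $\mathcal{T}_{Z_1} \subseteq \mathcal{T}_{Z_2}$ reduces to $\mathcal{T}_{Y_1} \subseteq \mathcal{T}_{Y_2}$, proving order preservation. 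The interval statement follows because the image of $\mathrm{red}^{-1}$ is bounded below by the image of $0 \in \opname{s\tau-tilt\,} C$ (namely $(X, 0)$) and above by the image of $C$ itself.

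The main obstacle will be the uniqueness of the $B$-component $R'$ in the inverse step: one must exclude the possibility that two distinct basic support $\tau$-tilting $A$-modules in $\opname{s\tau-tilt\,}_{(X, 0)} A$ have the same $C$-component. This is precisely where the hypothesis that $X$ is $\tau$-tilting (rather than merely support $\tau$-tilting) is essential, since it guarantees a unique $\tau$-rigid completion compatible with each prescribed $C$-part via the minimal left $\opname{Fac} X$-approximation; without this maximality Theorem 5.1 alone would give only existence, not bijectivity.
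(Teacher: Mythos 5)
Your overall architecture is reasonable, but the route you choose --- through Jasso's reduction and the $\tau$-perpendicular category of $(X,0)$ --- has a genuine gap at exactly the point you yourself flag as the main obstacle. You assert that ``the $\tau$-perpendicular category of $(X,0)$ in $\mod A$ is equivalent to $\mod C$''; this is not automatic, it is the content of the paper's Theorem 5.9 (proved \emph{after} Theorem 5.6, via an explicit computation of $\End_A(U,C)_f$ modulo morphisms factoring through $\add (X,0)$, using Wakamatsu's lemma to see that $\cok f\in\add X$). Even granting that identification, your step ``Jasso's bijection then forces this module to coincide with $Z$'' is circular: Jasso's bijection only forces $Z=(X,0)\oplus(X_S,S)_{f'}$ once you know that this bijection, transported along $C'\cong C$, \emph{is} the map $(R,S)_f\mapsto S$ --- which is precisely the injectivity you are trying to prove. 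Knowing that two elements of $\opname{s\tau-tilt\,}_{(X,0)}A$ have the same $C$-component does not identify them under an abstract bijection with $\opname{s\tau-tilt\,}C'$. Likewise, your closing sentence about uniqueness of the $B$-component restates the desired conclusion rather than proving it.

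The paper avoids all of this by working at the level of torsion classes: Theorem 5.5 shows that $\{\mathcal{T}\in \opname{f-\tors} A\mid \opname{Fac}(X,0)\subseteq\mathcal{T}\subseteq {}^{\bot}(\tau(X,0))\}$ is in order-preserving bijection with $\opname{f-\tors} C$, the key point being that $\opname{Fac}X={}^{\bot}(\tau X)$ (because $X$ is $\tau$-tilting, not merely support $\tau$-tilting) pins the $B$-slot of every such $\mathcal{T}$ to be exactly $\opname{Fac}X$, so $\mathcal{T}=\{(R,S)_g\mid R\in\opname{Fac}X,\ S\in\mathcal{G}\}$ with $\mathcal{G}\in\opname{f-\tors} C$. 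Transporting this along [AIR, Proposition 2.9] and Theorem 2.2, and computing $P(F(\opname{Fac}Z))=S$ and $P(G(\opname{Fac}Y))=(X,0)\oplus(X_Y,Y)_f$ (the latter by Theorem 5.1), yields the bijection, its explicit form, and the order and interval statements simultaneously. This is also the correct way to discharge your ``main obstacle'': uniqueness of the $B$-component holds because $\opname{Fac}Z$ determines $Z$ and its $B$-slot is forced to equal $\opname{Fac}X$. Note that your order-preservation argument already implicitly assumes this interval description of $\opname{Fac}Z$, so it must be proved rather than presupposed.
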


Maximal green sequences were introduced by Keller in [Ke] to provide explicit formulas for the refined Donaldson-Thomas invariants of Kontsevich and Soibelman. Br\"ustle, Smith and Treffinger adapted the definition in module categories and provided a characterization of maximal green sequences in terms of $\tau$-tilting theory [BST]. Moreover, maximal green sequences of a finite dimensional algebra are in bijection with sequences of bricks which is called complete forward hom-orthogonal sequences [KD, Theorem 5.4]. Br\"ustle, Dupont and Perotin in [BDP] presented the ``no gap conjecture" for lengths of maximal green sequences and hence a number of algebraists started to study lengths of maximal green sequences for special algebras, see [BDP, GM, HI, KN]. Here we give the construction of maximal green sequences over triangular matrix algebras as follows.
\begin{theorem}{\rm(Theorem 5.10)}
If $\alpha: 0=\mathscr{X}_{0}\subsetneq \mathscr{X}_{1}\subsetneq \cdot \cdot \cdot \subsetneq \mathscr{X}_{r-1}\subsetneq \mathscr{X}_{r}=\mod B$ and $\beta: 0=\mathscr{Y}_{0}\subsetneq \mathscr{Y}_{1}\subsetneq \cdot \cdot \cdot \subsetneq \mathscr{Y}_{s-1}\subsetneq \mathscr{Y}_{s}=\mod C$ are respectively maximal green sequences in $\mod B$ and $\mod C$, then $\gamma: 0=\mathscr{Z}_{0}\subsetneq \mathscr{Z}_{1}\subsetneq \cdot \cdot \cdot \mathscr{Z}_{r}\subsetneq \mathscr{Z}_{r+1} \cdot \cdot \cdot\subsetneq \mathscr{Z}_{r+s-1}\subsetneq \mathscr{Z}_{r+s}=\mod A$ is a maximal green sequence in $\mod A$, where $\mathscr{Z}_{i}=\{(X,0)|X\in \mathscr{X}_{i}\}$ for $0\leq i\leq r$ and $\mathscr{Z}_{r+j}=\{(X,Y)_{f}|X\in \mod B, Y\in\mathscr{Y}_{j} \}$ for $1\leq j\leq s$. In particular, the lengths of $\alpha$ and $\beta$ sum up to that of $\gamma$.
\end{theorem}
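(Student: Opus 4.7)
The plan is to verify the three defining properties of a maximal green sequence for $\gamma$: each $\mathscr{Z}_k$ is a functorially finite torsion class in $\mod A$; the chain is strictly increasing; and each consecutive inclusion is a cover in the lattice $\opname{tors}_{ff}(\mod A)$. The first two will follow from the gluing construction of Theorem~5.1, while the third is the substantive point and reduces to Theorem~5.6 together with a symmetric analog for the lower block.

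First I would identify the support $\tau$-tilting module underlying each $\mathscr{Z}_k$. For $0\le i\le r$, let $X_i$ be the support $\tau$-tilting $B$-module corresponding to $\mathscr{X}_i$; taking $Y=0$ in Theorem~5.1 makes $Y\otimes_C M=0$, so the minimal left $\opname{Fac}X_i$-approximation is zero and the glued module is simply $(X_i,0)$, whose associated torsion class is $\mathscr{Z}_i$. For $1\le j\le s$, with $X=B$ (corresponding to $\mathscr{X}_r=\mod B$) and $Y_j$ corresponding to $\mathscr{Y}_j$, the identity $\opname{Fac}B=\mod B$ lets us take the approximation to be the identity map, yielding $(B,0)\oplus(Y_j\otimes_C M,Y_j)$ with torsion class $\mathscr{Z}_{r+j}$. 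Strict inclusions within each block are inherited from $\alpha$ and $\beta$, and $\mathscr{Z}_r\subsetneq\mathscr{Z}_{r+1}$ is witnessed by any nonzero object of $\mathscr{Y}_1$, which yields an $A$-module with nonzero $C$-component and hence outside $\mathscr{Z}_r=\{(X,0)\mid X\in\mod B\}$.

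The heart of the argument is the cover condition. For the upper block, Theorem~5.6 applied with the $\tau$-tilting $B$-module $X=B$ provides an order-preserving bijection $\opname{s\tau-tilt\,}_{(B,0)}A\leftrightarrow\opname{s\tau-tilt\,}C$ identifying the interval $[\mathscr{Z}_r,\mod A]$ with the whole lattice $\opname{tors}_{ff}(\mod C)$; the image of the maximal chain $\beta$ is then a maximal chain in this interval, so each step $\mathscr{Z}_{r+j}\subsetneq\mathscr{Z}_{r+j+1}$ is a cover. For the lower block I need the analogous identification $\opname{tors}_{ff}(\mod B)\leftrightarrow[0,\mathscr{Z}_r]$ via $\mathcal{T}\mapsto\{(X,0)\mid X\in\mathcal{T}\}$: any functorially finite torsion class contained in $\mathscr{Z}_r$ consists of pairs with zero $C$-component, hence is determined by its $B$-part, and functorial finiteness is transported along $i_0$ by the ladder machinery of Theorem~4.4. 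Concatenating both maximal chains yields the desired $\gamma$ of length $r+s=\mathrm{length}(\alpha)+\mathrm{length}(\beta)$.

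The main obstacle I anticipate is justifying that the lower-block identification preserves cover relations, since Theorem~5.6 is formulated by fixing a $\tau$-tilting $B$-summand rather than by fixing the $C$-part to be zero. I expect to obtain it by a symmetric version of the argument for Theorem~5.6: mutations of $X\in\opname{s\tau-tilt\,}B$ lift to mutations of $(X,0)$ in $\opname{s\tau-tilt\,}A$ whose results still have zero $C$-component, because the approximation datum in the gluing vanishes whenever $Y=0$. Once this compatibility is spelled out, the three requirements combine to show $\gamma$ is a maximal green sequence and the length additivity is immediate.
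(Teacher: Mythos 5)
Your proposal is correct and follows essentially the same route as the paper: the upper block is handled by the reduction theorem with $X=B$ (the paper invokes the torsion-class form, Theorem 5.5, where you invoke Theorem 5.6; these are interchangeable via Theorem 2.2), and the lower block by the observation that torsion classes contained in $\mathscr{Z}_r$ are determined by their $B$-parts. Regarding the obstacle you flag: the mutation-compatibility detour is unnecessary and is not quite the right tool anyway, since the covering relation of Definition 2.6 quantifies over \emph{arbitrary} intermediate torsion classes rather than over mutation neighbours in $\opname{s\tau-tilt\,}A$; the direct argument you already sketch suffices, namely that any torsion class $\mathscr{T}$ with $\mathscr{Z}_i\subseteq\mathscr{T}\subseteq\mathscr{Z}_{i+1}\subseteq\mathscr{Z}_r$ has all $C$-components zero, its $B$-part $\mathcal{F}=\{X\mid (X,0)\in\mathscr{T}\}$ is a torsion class in $\mod B$ because $X\mapsto(X,0)$ is exact and fully faithful with image closed under quotients and extensions, and $\mathscr{X}_i\subseteq\mathcal{F}\subseteq\mathscr{X}_{i+1}$ then forces $\mathcal{F}\in\{\mathscr{X}_i,\mathscr{X}_{i+1}\}$.
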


The rest of this paper is organized as follows.

In Section 2, we recall some preliminary results on support $\tau$-tilting modules, torsion classes, semibricks, maximal green sequences, recollements and ladders needed for the later sections.

In Section 3, we mainly investigate the relation of covariantly finite subcategories (resp. contravariantly finite subcategories) in the middle categories of recollements and in the edge. Then we apply the results to symmetric ladders of abelian categories of height 2 and show that gluing torsion classes can be restricted to functorially finiteness condition.

In Section 4, we show how to glue support $\tau$-tilting modules by gluing functorially finite torsion classes via symmetric ladders of height 2 and gluing semibricks can be restricted to left finiteness condition.

In Section 5, we apply the result to triangular matrix algebras to give the gluing and reduction of support $\tau$-tilting modules. Moreover, we study maximal green sequences.

\section{Preliminaries}
\medskip

In this section, we collect some basic materials that will be used later. This concerns the relationship among functorially finite torsion classes, left finite semibricks, maximal green sequences and support $\tau$-tilting modules. Further, we recall known results on recollements of abelian categories and the definition of symmetric ladders of abelian categories.

Let $k$ be a field and $A$ a finite-dimensional $k$-algebra. We denote by $\opname{mod}A$ (resp. $\opname{proj}A$) the category of finitely generated (resp. finitely generated projective) right $A$-modules and by $\tau$ the Auslander-Reiten translation of $A$. The composition of maps or functors $f:X\longrightarrow Y$ and $g:Y\longrightarrow Z$ is denoted by $gf$. For $X\in\mod A$, we denote by $\opname{add}X$ (resp. $\opname{Fac}X$) the category of all direct summands (resp. factor modules) of finite direct sums of copies of $X$ and $|X|$ denotes the number of non-isomorphic indecomposable direct summands of $X$.
\vspace{0.2cm}

{\bf 2.1. $\tau$-tilting theory}
\vspace{0.2cm}

 We first recall the definition of support $\tau$-tilting modules from [AIR].
\begin{definition}
Let ($X$,$P$) be a pair with $X \in \mod A$ and $P \in \opname{proj}A$.
\begin{itemize}
\item[(1)] We call $X$ in $\mod A$ $\tau$-\emph{rigid} if  ${\opname{Hom}}_{A}(X, \tau X)$=0. We call ($X$,$P$) a $\tau$-\emph{rigid pair} if $X$ is $\tau$-rigid and ${\opname{Hom}}_{A}(P, X)$=0.
\item[(2)] We call $X$ in $\mod A$ $\tau$-\emph{tilting} if $X$ is $\tau$-rigid and $ |X| = |A|$.
\item[(3)] We call $X$ in $\mod A$ \emph{support $\tau$-tilting} if there exists an idempotent $e$ of $A$ such that $X$ is a $\tau$-tilting ($A/\langle e\rangle$)-module. We call ($X$,$P$) a \emph{support $\tau$-tilting pair} if ($X$,$P$) is $\tau$-rigid and $|X|+|P|=|A|$.
\end{itemize}
\end{definition}
An $A$-module $X$ is \emph{basic} if in a direct sum decomposition into indecomposable modules, no indecomposable module appears more than once, see [AR, Page 112]. We say that ($X$,$P$) is \emph{basic} if $X$ and $P$ are basic. Moreover, $X$ determines $P$ uniquely up to equivalence. We denote by $\opname{s\tau-tilt\,}A$ (resp. $\opname{\tau-tilt\,}A$) the set of isomorphism classes of basic support $\tau$-tilting $A$-modules (resp. $\tau$-tilting $A$-modules).
\vspace{0.2cm}

Let $\mathcal {A}$ be an abelian category and $\mathcal{T}$ a full subcategory of $\mathcal {A}$.
Assume that $T\in \mathcal{T}$ and $D\in\mathcal {A}$. The morphism $f: D\to T$ is called a \emph{left $\mathcal{T}$-approximation} of $D$ if
$$\Hom_{\mathcal {A}}(T, T')\to \Hom_{\mathcal {A}}(D, T')\to 0$$
is exact for any $T'\in\mathcal{T}$.
%The morphism $f: D\to T$ is called {\it left minimal} if an endomorphism $g: T\to T$ is an automorphism whenever $f=gf$.
The subcategory $\mathcal{T}$ is called \emph{covariantly finite} in $\mathcal {A}$ if
every module in $\mathcal {A}$ has a left $\mathcal{T}$-approximation. The notions of \emph{right
$\mathcal{T}$-approximations} and \emph{contravariantly finite subcategories} of $\mathcal {A}$ can be defined dually. The
subcategory $\mathcal{T}$ is called \emph{functorially finite} in $\mathcal {A}$ if it is both covariantly finite and contravariantly
finite in $\mathcal {A}$ ([AR]).

\vspace{0.2cm}

{\bf 2.2. Functorially finite torsion class}

\vspace{0.2cm}

Let $\mathcal {A}$ be an abelian category. We say that a full subcategory $\mathcal{T}$ of $\mathcal {A}$ is a \emph{torsion class} if it is closed under images, infinite sums and extensions (see [D, Theorem 2.3]). Considering the module category $\mod A$, we denote by $\opname{f-\tors} A$ (resp. $\opname{sf-\tors} A$) the set of functorially finite (resp. sincere functorially finite) torsion classes in $\mod A$. We say that $X \in \mathcal{T}$ is Ext-projective if $\Ext$$^{1}_{A}(X,\mathcal{T})$=0. We denote by P($\mathcal{T}$) the direct sum of one copy of each of the indecomposable $\Ext$-$\opname{proj}$ objects in $\mathcal{T}$ up to isomorphism. We have P($\mathcal{T}$)$\in\mod A$ if $\mathcal{T}\in\opname{f-\tors} A$ ([AS, Corollary 4.4]). The following result gives a relationship between $\opname{s\tau-tilt\,}A$ and $\opname{f-\tors} A$.
\begin{theorem}{\rm(See [AIR, Theorem 2.7 and Corollary 2.8])}
There is a bijection
\begin{center}
$\opname{s\tau-tilt\,}A \longleftrightarrow \opname{f-\tors} A,$
\end{center}
given by $\opname{s\tau-tilt\,}A \ni T \mapsto \opname{Fac}T \in \opname{f-\tors} A$ and $\opname{f-\tors} A \ni \mathcal{T} \mapsto P(\mathcal{T}) \in\opname{s\tau-\tilt} A$, which restricts a bijection
\begin{center}
$\opname{\tau-tilt\,}A \longleftrightarrow \opname{sf-\tors} A.$
\end{center}
\end{theorem}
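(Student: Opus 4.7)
The plan is to construct the two maps explicitly, verify they are well-defined, and then show they are mutually inverse. The backbone of every step is the Auslander--Reiten duality $D\Ext^{1}_{A}(M,N)\cong\overline{\Hom}_{A}(N,\tau M)$ (with $\overline{\Hom}$ meaning Hom modulo morphisms factoring through injectives), which converts $\tau$-rigidity into Ext-vanishing and vice versa.

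First I would show that $T\mapsto\opname{Fac}T$ lands in $\opname{f-\tors} A$. Closure of $\opname{Fac}T$ under factor modules is immediate. Closure under extensions reduces to $\Ext^{1}_{A}(T,\opname{Fac}T)=0$: for $X\in\opname{Fac}T$ with a surjection $T^{n}\twoheadrightarrow X$, any map $X\to\tau T$ lifts to $T^{n}\to\tau T$, which vanishes by the $\tau$-rigidity assumption $\Hom_{A}(T,\tau T)=0$, and the AR formula then kills $\Ext^{1}_{A}(T,X)$. Functorial finiteness is inherited from $\add T$: a right $\opname{Fac}T$-approximation of any $M$ is the image of a right $\add T$-approximation $T^{n}\to M$, and covariant finiteness follows from the Auslander--Smal{\o} criterion for subcategories closed under extensions and quotients that are generated by $\add T$.

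For the converse map $\mathcal{T}\mapsto P(\mathcal{T})$, the cited [AS, Corollary~4.4] already guarantees $P(\mathcal{T})\in\mod A$. The $\tau$-rigidity of $P(\mathcal{T})$ is the core input: for Ext-projective $X,Y\in\mathcal{T}$ one has $\Ext^{1}_{A}(Y,X)=0$, so AR duality gives $\overline{\Hom}_{A}(X,\tau Y)=0$, and an indecomposability/minimality argument rules out nontrivial factorizations through injectives, yielding $\Hom_{A}(P(\mathcal{T}),\tau P(\mathcal{T}))=0$. A dimension count then produces $|P(\mathcal{T})|+|Ae|=|A|$ where $Ae$ is the sum of indecomposable projectives annihilating $\mathcal{T}$, giving the required support $\tau$-tilting pair structure. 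To close the bijection, for $T\in\opname{s\tau-tilt\,}A$ each summand of $T$ is Ext-projective in $\opname{Fac}T$ by the previous paragraph, and the matching counts $|P(\opname{Fac}T)|+|Ae|=|A|=|T|+|Ae|$ force $P(\opname{Fac}T)=T$; conversely, for $\mathcal{T}\in\opname{f-\tors} A$ every object admits a surjective right $\add P(\mathcal{T})$-approximation, whence $\opname{Fac}P(\mathcal{T})=\mathcal{T}$. The refinement to the $\tau$-tilting/sincere case is then formal: $T$ is $\tau$-tilting iff $|T|=|A|$ iff $Ae=0$ iff no projective is killed by $\opname{Fac}T$ iff $\opname{Fac}T$ is sincere.

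The main obstacle I anticipate is the identity $\opname{Fac}P(\mathcal{T})=\mathcal{T}$ for arbitrary functorially finite $\mathcal{T}$: one must show that Ext-projective covers generate the whole torsion class, which rests on a delicate interplay between the contravariant and covariant approximations provided by functorial finiteness. This is precisely where the Auslander--Smal{\o} theory of Ext-projective objects is indispensable, and together with the dimension count $|P(\mathcal{T})|+|Ae|=|A|$ it forms the technical heart of the AIR theorem.
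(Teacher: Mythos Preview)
The paper does not prove this statement; it is quoted verbatim from Adachi--Iyama--Reiten [AIR, Theorem~2.7 and Corollary~2.8] as background in the preliminaries, so there is no in-paper proof to compare against.

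Your outline is essentially the argument of [AIR] and is broadly sound. A couple of small sharpenings would make it watertight. First, in the step showing $\Ext^{1}_{A}(T,\opname{Fac}T)=0$, the word ``lifts'' is misleading: you are precomposing a map $X\to\tau T$ with the surjection $T^{n}\twoheadrightarrow X$, and surjectivity (not lifting) is what forces the original map to vanish. Second, for the $\tau$-rigidity of $P(\mathcal{T})$ it is cleaner to bypass the $\overline{\Hom}$ versus $\Hom$ issue entirely by using the equivalence $\Hom_{A}(M,\tau N)=0\Leftrightarrow\Ext^{1}_{A}(N,\opname{Fac}M)=0$ (this is [AIR, Proposition~2.4] or the Auslander--Smal{\o} lemma): since $P(\mathcal{T})$ is Ext-projective in $\mathcal{T}$ and $\opname{Fac}P(\mathcal{T})\subseteq\mathcal{T}$, one gets $\Hom_{A}(P(\mathcal{T}),\tau P(\mathcal{T}))=0$ immediately, avoiding the delicate factoring-through-injectives discussion. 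Your identification of $\opname{Fac}P(\mathcal{T})=\mathcal{T}$ as the crux is accurate; this is precisely where the Auslander--Smal{\o} theory of functorially finite torsion classes is invoked in [AIR].
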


\vspace{0.2cm}

{\bf 2.3. Semibricks}

\vspace{0.2cm}
Recall the definition of semibricks in [As, Definition 2.1].
\begin{definition}
\begin{enumerate}
\item A module $S\in \opname{mod}A$ is called a \emph{brick} if  $\opname{End}_{A}(S)$ is a division $k$-algebra (i.e., the non-trivial endomorphisms are invertible). We write $\opname{brick}A$ for the set of isoclasses of bricks in $\opname{mod}A$.
\item A subset $\mathcal{S}\subset \opname{brick}A$ is called a \emph{semibrick} if $\opname{Hom}_{A}(S_{1}, S_{2})=0$ for any $S_{1}\neq S_{2}\in\mathcal{S}$. We write $\opname{sbrick}A$ for the set of semibricks in $\opname{mod}A$.
\end{enumerate}
\end{definition}
For example, a simple module is a brick and a set of isoclasses of simple modules is a semibrick by Schur's Lemma. Moreover, preprojective modules and preinjective modules over a finite-dimensional algebra are also bricks in the module categories (see [ASS, \uppercase\expandafter{\romannumeral8}.2.7. Lemma]).

Let $\mathcal {C}\subset \mod A$ be a full subcategory. We use the following notations.\\
$\bullet \opname{add}\mathcal {C}$ is the additive closure of $\mathcal {C}$,\\
$\bullet \opname{Fac}\mathcal {C}$ consists of the factor modules of objects in $\opname{add}\mathcal {C}$,\\
$\bullet \opname{Filt}\mathcal {C}$ consists of the objects $M$ such that there exists a sequence $0=M_{0}\subset M_{1}\subset \cdot \cdot \cdot \subset M_{n}=M$ with $M_{i}/M_{i-1}\in \opname{add}\mathcal {C}$ for all $1\leq i\leq n$.

We write $\opname{T}(\mathcal {C})$ for the smallest torsion class containing $\mathcal {C}$. It is well known that $\opname{T}(\mathcal {C})=\opname{Filt}(\opname{Fac}\mathcal {C})$ (see [MS, Lemma 3.1]). Recall the definition of left finiteness of semibricks in [As, Definition 2.2].
\begin{definition}
Let $\mathcal {S}\in \opname{sbrick}A$. The semibrick $\mathcal {S}$ is said to be \emph{left finite} if $\opname{T}(\mathcal {S})$ is functorially finite. We write $\opname{f_{L}-sbrick}A$ for the set of left finite semibricks in $\mod A$.
\end{definition}
The following result gives bijections among $\opname{f_{L}-sbrick}A$,  $\opname{f-\tors} A$ and $\opname{s\tau-tilt\,}A$.
\begin{proposition}{\rm{([As, Proposition 2.9])}}
Let $M\in \opname{s\tau-tilt\,}A$ and $B:=\opname{End}_{A}(M)$. We have the following commutative diagram of bijections:
$$
\begin{xy}
(-40,  0) *+{\opname{s\tau-tilt\,}A} = "0",
(  0,  0) *+{\opname{f-\tors} A} = "1",
( 40,  0) *+{\opname{f_{L}-sbrick}A} = "3",
(-40, -8) = "4",
( 40, -8) = "5",
\ar@<1.0mm>^{\opname{Fac}} "0"; "1"
\ar_{\opname{T}} "3"; "1"
\ar@{-} "0"; "4"
\ar@{-}_{M \mapsto \ind(M/{\rad_B M})} "4"; "5"
\ar "5"; "3"
\ar@<1.0mm>^{P} "1"; "0"
\end{xy}.
$$
\end{proposition}

\vspace{0.2cm}

{\bf 2.4. Maximal green sequences}

\vspace{0.2cm}

Recall the definition of maximal green sequences from [BST, Definition 4.8].
\begin{definition}
A \emph{maximal green sequence} in $\mod A$ is a finite sequence of torsion classes $0=\mathscr{T}_{0}\subsetneq \mathscr{T}_{1}\subsetneq \cdot \cdot \cdot \subsetneq \mathscr{T}_{r-1}\subsetneq \mathscr{T}_{r}=\mod A$ such that for all $0\leq i\leq r-1$, the inclusions $\mathscr{T}_{i}\subsetneq \mathscr{T}_{i+1}$ are covering relations (i.e., the existence of a torsion class $\mathscr{T}$ satisfying $\mathscr{T}_{i}\subseteq \mathscr{T}\subseteq \mathscr{T}_{i+1}$ implies $\mathscr{T}=\mathscr{T}_{i}$ or $\mathscr{T}=\mathscr{T}_{i+1}$).
\end{definition}
The following result gives a relation between maximal green sequence and support $\tau$-tilting modules.
\begin{proposition}{\rm{([BST, Proposition 4.9])}}
Let $0=\mathscr{T}_{0}\subsetneq \mathscr{T}_{1}\subsetneq \cdot \cdot \cdot \subsetneq \mathscr{T}_{r-1}\subsetneq \mathscr{T}_{r}=\mod A$ be a maximal green sequence in $\mod A$. Then there exists a set of $\{M_{i}\}_{i=0}^{r}$ of support $\tau$-tilting modules such that $\opname{Fac}M_{i}=\mathscr{T}_{i}$ for all $0\leq i\leq r$.
\end{proposition}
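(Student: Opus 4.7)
The plan is to reduce the statement to showing that each torsion class $\mathscr{T}_{i}$ appearing in the maximal green sequence is itself functorially finite. Once this is known, Theorem 2.3 directly supplies a basic support $\tau$-tilting module $M_{i} := P(\mathscr{T}_{i})$ with $\opname{Fac} M_{i} = \mathscr{T}_{i}$, which is exactly what is required. The strategy is induction on $i$, where at each step I construct $M_{i+1}$ from $M_{i}$ by an appropriate left mutation.

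The base case is immediate: $\mathscr{T}_{0} = 0 = \opname{Fac}(0)$ with $M_{0} = 0$, and similarly $\mathscr{T}_{r} = \mod A = \opname{Fac}(A)$ with $M_{r} = A$. For the inductive step, assume $\mathscr{T}_{i} = \opname{Fac} M_{i}$ is functorially finite. I would invoke the AIR mutation theory: each left-mutable indecomposable summand $N$ of $M_{i}$ gives a left mutation $\mu_{N}^{-}(M_{i})$, again a support $\tau$-tilting module, whose functorially finite torsion class $\opname{Fac}\mu_{N}^{-}(M_{i})$ is a cover of $\mathscr{T}_{i}$ not merely in the sublattice of functorially finite torsion classes but in the entire lattice $\tors A$, by [AIR, Theorem 2.33]. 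Picking any indecomposable $X \in \mathscr{T}_{i+1} \setminus \mathscr{T}_{i}$, the smallest torsion class $\opname{T}(\mathscr{T}_{i} \cup \{X\})$ satisfies $\mathscr{T}_{i} \subsetneq \opname{T}(\mathscr{T}_{i} \cup \{X\}) \subseteq \mathscr{T}_{i+1}$, so by the covering property it equals $\mathscr{T}_{i+1}$. The brick labelling of covers in $\tors A$ then matches $\mathscr{T}_{i+1}$ with a unique left-mutable summand $N$ of $M_{i}$, and one sets $M_{i+1} := \mu_{N}^{-}(M_{i})$, which is functorially finite and satisfies $\opname{Fac} M_{i+1} = \mathscr{T}_{i+1}$.

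The main obstacle is the step asserting that every cover of a functorially finite torsion class $\mathscr{T}_{i}$ in the full torsion class lattice $\tors A$ is realised as $\opname{Fac}$ of a left mutation of $M_{i}$. One must rule out the possibility that $\mathscr{T}_{i+1}$ is a non-functorially-finite torsion class sitting immediately above a functorially finite $\mathscr{T}_{i}$; that is, one must propagate functorial finiteness along the covering relations of the maximal green sequence. This rests on AIR's explicit description of left mutation at an indecomposable summand combined with the brick-labelling of covers in $\tors A$; once that ingredient is in place, the induction runs without further complication, and as a bonus one obtains that consecutive $M_{i}$ are related by a single left mutation.
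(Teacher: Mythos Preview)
The paper does not give its own proof of this proposition; it is simply quoted from [BST, Proposition 4.9]. Later in Section 5 the paper invokes [DK, Theorem 5.3] for precisely the statement that every torsion class appearing in a maximal green sequence is functorially finite, which is the heart of your reduction.

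Your outline is the standard one and is essentially correct, but one citation should be sharpened. You invoke [AIR, Theorem 2.33] for the claim that $\opname{Fac}\mu_N(M_i)$ is a cover of $\mathscr{T}_i$ \emph{in the full lattice $\tors A$}, and conversely that every such cover arises from a mutation. What [AIR, Theorem 2.33] actually establishes is that mutations correspond to Hasse arrows in $\opname{s\tau\text{-}tilt}\,A$, i.e.\ to covers in $\opname{f\text{-}tors}\,A$. The passage from covers in $\opname{f\text{-}tors}\,A$ to covers in $\tors A$---equivalently, the statement that any torsion class covering a functorially finite one is again functorially finite---is not in [AIR]; it is [DIJ, Theorem 3.1] (or can be extracted from the brick-labelling machinery in [DIRRT] that you allude to). Once that ingredient is supplied, your induction runs exactly as you describe, and indeed yields the extra information that consecutive $M_i$ differ by a single mutation.

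A minor point: the bijection you want is Theorem 2.2 in the paper's numbering, not Theorem 2.3.
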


\vspace{0.2cm}

{\bf 2.5. Recollements}

\vspace{0.2cm}

For the convenience, we recall the definition of recollements of abelian categories, see for instance [BBD, FP].
\begin{definition} Let $\mathcal {A}, \mathcal {B}, \mathcal {C} $ be abelian categories. Then a recollement of $\mathcal {A}$ relative to $\mathcal {B}$ and $\mathcal {C}$, is diagrammatically expressed by
$$\xymatrix@!C=2pc{
\mathcal {B}\; \ar@{>->}[rr]|{i_{*}} && \mathcal {A} \ar@<-4.0mm>@{->>}[ll]_{i^{*}} \ar@{->>}[rr]|{j^{*}} \ar@{->>}@<4.0mm>[ll]^{i^{!}}&& \mathcal {C} \ar@{>->}@<-4.0mm>[ll]_{j_{!}} \ar@{>->}@<4.0mm>[ll]^{j_{*}}
}$$
which satisfies the following three conditions:
\begin{enumerate}
\item ($i^{*}, i_{*}$), ($i_{*}, i^{!}$), ($j_{!}, j^{*}$) and ($j^{*}, j_{*}$) are adjoint pairs;
\item $i_{*}, j_{!}$ and $j_{*}$ are fully faithful functors;
\item ${\rm Im}i_{*}={\rm Ker}j^{*}$.
\end{enumerate}
\end{definition}
\begin{remark}
(1) From Definition 2.8(1), it follows that $i_{*}$ and $j^{*}$ are both right adjoint functors and left adjoint functors, therefore they are exact functors of abelian categories.

(2) By the definition of recollements, we have $i^{*}i_{*}\cong id, i^{!}i_{*}\cong id, j^{*}j_{!}\cong id$ and $j^{*}j_{*}\cong id$. Also $i^{*}j_{!}=0, i^{!}j_{*}=0$.

(3) Throughout this paper, we denote by $R(\mathcal {B}, \mathcal {A}, \mathcal {C})$ (resp. $R(B, A, C)$) a recollement of $\mathcal {A}$ (resp. $\opname{mod}A$) relative to $\mathcal {B}$ (resp. $\opname{mod}B$) and $\mathcal {C}$ (resp. $\opname{mod}C$) as above.
\end{remark}
The definition of ladders of recollements of abelian categories is given in [GKP, Definition 1.1]. Unlike ladders of triangulated categories [AKLY], ladders of abelian categories might be asymmetry for dealing with more general cases. We only use symmetric ladders in this paper.
A \emph{symmetric ladder} $\mathcal {R}$ is a finite or infinite
diagram of abelian categories and functors
\vspace{-10pt}\[{\setlength{\unitlength}{0.7pt}
\begin{picture}(200,170)
\put(0,70){$\xymatrix@!=3pc{\mathcal {B}
\ar@<+2.5ex>[r]|{i_{n+1}}\ar@<-2.5ex>[r]|{i_{n-1}} &\mathcal{A}
\ar[l]|{j_n} \ar@<-5.0ex>[l]|{j_{n+2}} \ar@<+5.0ex>[l]|{j_{n-2}}
\ar@<+2.5ex>[r]|{j_{n+1}} \ar@<-2.5ex>[r]|{j_{n-1}} &
\mathcal{C}\ar[l]|{i_n} \ar@<-5.0ex>[l]|{i_{n+2}}
\ar@<+5.0ex>[l]|{i_{n-2}}}$} %\put(10,20){$\vdots$}
\put(52.5,10){$\vdots$} %\put(95,20){$\vdots$}
\put(137.5,10){$\vdots$} %\put(177,20){$\vdots$}
%\put(10,140){$\vdots$}
\put(52.5,130){$\vdots$} %\put(95,140){$\vdots$}
\put(137.5,130){$\vdots$} %\put(177,140){$\vdots$}
\end{picture}}
\]
such that any three consecutive rows form a recollement (see [BGS, FZ, GKP]). The \emph{height} of a symmetric ladder is the number of recollements
contained in it (counted with multiplicities). A recollement
is considered to be a ladder of height 1. Actually, Feng and Zhang's classification states that the height is an element of $\{0, 1, 2, \infty\}$ ([FZ, Theorem 1.1]). In this paper, we mainly concern symmetric ladders of abelian categories of height 2 which contain two recollements called upper and lower recollements.

\section{Gluing functorially finite torsion classes}
\medskip

In this section, we mainly investigate covariantly finite subcategories (resp. contravariantly finite subcategories) in recollements of abelian categories. The version in recollements of triangulated categories was already discussed in [T]. Then we apply the result to symmetric ladders of abelian categories of height 2. The following proposition is very useful in the sequel.
\begin{proposition}
Let $R(\mathcal {B}, \mathcal {A}, \mathcal {C})$ be a recollement of abelian categories. Assume that $\mathcal {T}_{\mathcal {B}}$ and $\mathcal {T}_{\mathcal {C}}$ are subcategories of $\mathcal {B}$ and $\mathcal {C}$, $\mathcal {T}_{\mathcal {A}}=\{T_{A}\in \mathcal {A}|j^{*}(T_{A})\in\mathcal {T}_{\mathcal {C}}, i^{!}(T_{A})\in\mathcal {T}_{\mathcal {B}}\}$ and $\mathcal {M}_{\mathcal {A}}=\{M_{A}\in \mathcal {A}|j^{*}(M_{A})\in\mathcal {T}_{\mathcal {C}}, i^{*}(M_{A})\in\mathcal {T}_{\mathcal {B}}\}$ are subcategories of $\mathcal {A}$. Then we have
\begin{enumerate}
\item If $\mathcal {A}$ has enough projective objects and $i^{!}$ is exact, then $\mathcal {T}_{\mathcal {B}}$ and $\mathcal {T}_{\mathcal {C}}$ are covariantly finite if and only if $\mathcal {T}_{\mathcal {A}}$ is covariantly finite;
\item If $\mathcal {A}$ has enough injective objects and $i^{*}$ is exact, then $\mathcal {T}_{\mathcal {B}}$ and $\mathcal {T}_{\mathcal {C}}$ are contravariantly finite if and only if $\mathcal {M}_{\mathcal {A}}$ is contravariantly finite;
\item If $\mathcal {A}$ has enough projective and injective objects, $i^{!}$ and $i^{*}$ are exact, then $\mathcal {T}_{\mathcal {B}}$ and $\mathcal {T}_{\mathcal {C}}$ are functorially finite if and only if $\mathcal {T}_{\mathcal {A}}(=\mathcal {M}_{\mathcal {A}})$ is functorially finite.
\end{enumerate}
\end{proposition}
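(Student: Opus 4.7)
My plan is to establish (1) directly, deduce (2) by a dual argument, and combine them for (3).

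For the easy direction $(\Leftarrow)$ of (1), assuming $\mathcal{T}_\mathcal{A}$ is covariantly finite, I would obtain a left $\mathcal{T}_\mathcal{B}$-approximation of $B \in \mathcal{B}$ by taking a left $\mathcal{T}_\mathcal{A}$-approximation $i_*B \to T_A$ of $i_*B$ and applying $i^!$; using $i^!i_* \cong \id$, the resulting map $B \to i^!T_A$ is the desired approximation, because any test map $B \to T_B' \in \mathcal{T}_\mathcal{B}$ lifts to $i_*B \to i_*T_B'$ (with $i_*T_B' \in \mathcal{T}_\mathcal{A}$ via $j^*i_* = 0$), then factors through $T_A$, and descends via $i^!$. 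For $\mathcal{T}_\mathcal{C}$ the parallel argument uses $j_!C$, the adjunction isomorphism $\Hom_\mathcal{C}(C, T_C') \cong \Hom_\mathcal{A}(j_!C, j_*T_C')$ from $(j^*, j_*)$, and $j_*T_C' \in \mathcal{T}_\mathcal{A}$.

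The harder direction $(\Rightarrow)$ is the main content. Given $A \in \mathcal{A}$, I plan a \emph{two-step pushout construction}. Take a left $\mathcal{T}_\mathcal{C}$-approximation $\phi: j^*A \to T_C$ and form the pushout $A_1$ of the counit $j_!j^*A \to A$ of $(j_!, j^*)$ along $j_!\phi$; exactness of $j^*$ together with $j^*j_! \cong \id$ yields $j^*A_1 \cong T_C$. Then take a left $\mathcal{T}_\mathcal{B}$-approximation $\psi': i^!A_1 \to T_B$ and form the pushout $T_A$ of the counit $i_*i^!A_1 \to A_1$ along $i_*\psi'$. The exactness of $i^!$ is crucial here: being already left exact as a right adjoint, the added right-exactness makes $i^!$ preserve pushouts, giving $i^!T_A \cong T_B$; meanwhile $j^*T_A \cong T_C$ using $j^*i_* = 0$. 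Hence $T_A \in \mathcal{T}_\mathcal{A}$. For the universal property, given $f: A \to T$ with $T \in \mathcal{T}_\mathcal{A}$, factor $j^*f = h_C \phi$ and take the $(j_!, j^*)$-adjoint $\widetilde{h_C}: j_!T_C \to T$; the required compatibility $\widetilde{h_C} \circ j_!\phi = f \circ (j_!j^*A \to A)$ follows from naturality of the counit plus the triangle identity, so the first pushout property yields $g_1: A_1 \to T$ extending $f$. Then factor $i^!g_1 = h_B \psi'$, take the $(i_*, i^!)$-adjoint $\widetilde{h_B}: i_*T_B \to T$, and analogously the second pushout property produces $g: T_A \to T$ restricting to $f$ along $A \to A_1 \to T_A$.

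For (2), I would run the dual argument: use units of $(i^*, i_*)$ and $(j^*, j_*)$ in place of counits, pullbacks in place of pushouts, and the sequence $j_!j^*M \to M \to i_*i^*M \to 0$ (exact when $i^*$ is exact) to build a right $\mathcal{M}_\mathcal{A}$-approximation. For (3) I further need $\mathcal{T}_\mathcal{A} = \mathcal{M}_\mathcal{A}$ when both $i^*$ and $i^!$ are exact; I plan to deduce this from the two canonical sequences $0 \to i_*i^!T \to T \to j_*j^*T$ and $j_!j^*T \to T \to i_*i^*T \to 0$ together with closure of the torsion class $\mathcal{T}_\mathcal{B}$ under quotients and extensions, forcing $i^!T \in \mathcal{T}_\mathcal{B}$ iff $i^*T \in \mathcal{T}_\mathcal{B}$. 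The main obstacle will be the verification in $(\Rightarrow)$ of (1): correctly computing the invariants $j^*T_A$ and $i^!T_A$ of the two-step pushout (which relies decisively on the exactness of $i^!$ together with the identities $j^*j_! \cong \id$, $j^*i_* = 0$, $i^!i_* \cong \id$) and then chaining the two pushout universal properties, verifying each compatibility square through naturality of the counits and the triangle identities so that the final extension agrees with $f$.
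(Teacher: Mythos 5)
Your argument for (1) is correct, but it takes a genuinely different route from the paper in the hard direction. The paper first invokes [T, Lemma 2]/[Z, Theorem 2.1] to get left approximations into $j_*(\mathcal{T}_{\mathcal{C}})$ and $i_*(\mathcal{T}_{\mathcal{B}})$, then uses a projective cover $P\to j_*(T_C)$ to build a short exact sequence $0\to K\to X\oplus P\to j_*(T_C)\to 0$ and takes a single pushout along a left $i_*(\mathcal{T}_{\mathcal{B}})$-approximation of $K$; exactness of $i^{!}$ enters through the exact sequence $0\to i_*i^{!}\to \mathrm{id}\to j_*j^{*}\to 0$. Your two-step pushout along the counits $j_!j^{*}A\to A$ and $i_*i^{!}A_1\to A_1$ achieves the same thing more directly: the computations $j^{*}A_1\cong T_C$, $j^{*}T_A\cong T_C$, $i^{!}T_A\cong T_B$ are exactly as you describe (exactness of $j^{*}$, $j^{*}j_!\cong\mathrm{id}$, $j^{*}i_*=0$, $i^{!}i_*\cong\mathrm{id}$, and right-exactness of $i^{!}$ to push $i^{!}$ through the second cokernel), and the chaining of the two pushout universal properties via naturality of the counits is sound. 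A notable byproduct is that your construction never uses the ``enough projectives'' hypothesis in the forward direction, nor the external lemmas on covariant finiteness of $i_*(\mathcal{T}_{\mathcal{B}})$ and $j_*(\mathcal{T}_{\mathcal{C}})$; so your argument is, if anything, slightly more general. The easy direction and the dual statement (2) agree in substance with the paper.

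The one genuine weak point is your justification of $\mathcal{T}_{\mathcal{A}}=\mathcal{M}_{\mathcal{A}}$ in (3). The proposition assumes only that $\mathcal{T}_{\mathcal{B}}$ and $\mathcal{T}_{\mathcal{C}}$ are subcategories, so you cannot appeal to closure of $\mathcal{T}_{\mathcal{B}}$ under quotients and extensions. Moreover, even granting torsion-class structure, the two canonical sequences do not obviously give the equivalence: applying the exact functor $i^{*}$ to $0\to i_*i^{!}T\to T\to j_*j^{*}T\to 0$ yields $0\to i^{!}T\to i^{*}T\to i^{*}j_*j^{*}T\to 0$, so $i^{!}T$ is a \emph{subobject} of $i^{*}T$, and a torsion class is not closed under subobjects; the implication $i^{*}T\in\mathcal{T}_{\mathcal{B}}\Rightarrow i^{!}T\in\mathcal{T}_{\mathcal{B}}$ does not follow from your sketch. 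What is actually needed (and what the paper outsources to [MH, Lemma 2(3)] or [FZ, Proposition 3.4]) is that the exactness of both $i^{*}$ and $i^{!}$ identifies the two functors on the nose (equivalently forces $i^{*}j_*j^{*}=0$ in the sequence above), whence $\mathcal{T}_{\mathcal{A}}=\mathcal{M}_{\mathcal{A}}$ for arbitrary subcategories. You should either prove that identification of functors or cite it; as written, this step of (3) does not go through.
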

\begin{proof}
We only prove (1) since the proofs of (1) and (2) are dual to each other. By [MH, Lemma 2(3)] or [FZ, Proposition 3.4], if $i^{!}$ and $i^{*}$ are exact, then $i^{*}\cong i^{!}$ and $j_{!}\cong j_{*}$, and $\mathcal {A}\cong \mathcal {B}\oplus \mathcal {C}.$ Thus we can easily get (3) from (1) and (2).

We first establish the``if" part. Assume $\mathcal {T}_{\mathcal {A}}=\{T_{A}\in \mathcal {A}|j^{*}(T_{A})\in\mathcal {T}_{\mathcal {C}}, i^{!}(T_{A})\in\mathcal {T}_{\mathcal {B}}\}$ is covariantly finite in $\mathcal {A}$. For any $X\in \mathcal {B}$, its image $i_{*}(X)\in \mathcal {A}$. Since $\mathcal {T}_{\mathcal {A}}$ is covariantly finite, there is a left $\mathcal {T}_{\mathcal {A}}$-approximation $a: i_{*}(X)\longrightarrow T_{A}$. It follows that $i^{!}(T_{A})\in\mathcal {T}_{\mathcal {B}}$ since $T_{A}\in \mathcal {T}_{\mathcal {A}}.$ We claim that $i^{!}a: X\longrightarrow i^{!}(T_{A})$ is a left $\mathcal {T}_{\mathcal {B}}$-approximation of $X$.

Let $M_{B}\in \mathcal {T}_{\mathcal {B}}$ and $b: X\longrightarrow M_{B}$. It is easy to check that $i_{*}(M_{B})\in \mathcal {T}_{\mathcal {A}}$ and $i_{*}b: i_{*}(X)\longrightarrow i_{*}(M_{B})$. Since $a$ is a left $\mathcal {T}_{\mathcal {A}}$-approximation, there exists $c$ such that $i_{*}b=ca$. Thus we have $b\cong i^{!}i_{*}b=(i^{!}c)(i^{!}a)$. This shows that $\mathcal {T}_{\mathcal {B}}$ is covariantly finite. The proof that $\mathcal {T}_{\mathcal {C}}$ is covariantly finite when $\mathcal {T}_{\mathcal {A}}$ is covariantly finite is similar.

Assume now that $\mathcal {T}_{\mathcal {B}}$ and $\mathcal {T}_{\mathcal {C}}$ are covariantly finite. By [T, Lemma 2] or [Z, Theorem 2.1], it follows that $j_{*}(\mathcal {T}_{\mathcal {C}})$ and $i_{*}(\mathcal {T}_{\mathcal {B}})$ are covariantly finite in $\mathcal {A}$.

For any $X\in \mathcal {A}$, let $f: X\longrightarrow j_{*}(T_{C})$ be the left $j_{*}(\mathcal {T}_{\mathcal {C}})$-approximation of $X$, where $T_{C}\in \mathcal {T}_{\mathcal {C}}$. Since $\mathcal {A}$ has enough projective objects, we consider the epimorphism $\pi :P\longrightarrow j_{*}(T_{C})$, where $P$ is the projective cover of $j_{*}(T_{C})$. Consider the short exact sequence $0\longrightarrow K\longrightarrow X\oplus P\longrightarrow j_{*}(T_{C})\longrightarrow 0$. Let $h: K\longrightarrow i_{*}(T_{B})$ be the left $i_{*}(\mathcal {T}_{\mathcal {B}})$-approximation of $K$. Consider the pushout diagram:
$$\xymatrix{
& 0\ar[r] & K\ar[r]^{g}\ar[d]_{h} & X\oplus P\ar[r]^{(f,\pi)}\ar[d]^{(l, l')} & j_{*}(T_{C})\ar[r]\ar@{=}[d] & 0&\\
& 0\ar[r] &i_{*}(T_{B})\ar[r]^{k} & T\ar[r] & j_{*}(T_{C})\ar[r] & 0.&}$$
Applying $i^{!}$ to the second row of the above diagram, by Remark 2.9(2) we have $i^{!}(T)\cong T_{B}\in \mathcal {T}_{\mathcal {B}}$. Applying $j^{*}$ to the second row of the above diagram, by Remark 2.9(2) we have $j^{*}(T)\cong T_{C}\in \mathcal {T}_{\mathcal {C}}$. Thus $T\in \mathcal {T}_{\mathcal {A}}$.

We claim that the map $l: X\longrightarrow T$ is the left $\mathcal {T}_{\mathcal {A}}$-approximation of $X$.

Let $M\in \mathcal {T}_{\mathcal {A}}$ and $m: X\longrightarrow M$. Since $i^{!}$ is exact, by [FZ, Lemma 3.1(4)] we have an exact sequence $0\rightarrow i_{*}i^{!}(M)\buildrel {\sigma_{M}} \over\rightarrow M\buildrel {\varepsilon_{M}} \over\rightarrow j_{*}j^{*}(M)\rightarrow 0$ and a map $\varepsilon_{M}\cdot m: X\longrightarrow j_{*}j^{*}(M)$. Since $j^{*}(M)\in \mathcal {T}_{\mathcal {C}}$ and $f: X\longrightarrow j_{*}(T_{C})$ is the left $j_{*}(\mathcal {T}_{\mathcal {C}})$-approximation of $X$, there exists $p: j_{*}(T_{C})\longrightarrow j_{*}j^{*}(M)$ such that $\varepsilon_{M}\cdot m=pf$.

Since $P$ is projective and $\varepsilon_{M}$ is an epimorphism, there exists $m': P\longrightarrow M$ such that $p\pi=\varepsilon_{M}\cdot m'$. Hence we have the following commutative diagram:
$$\xymatrix{
& 0\ar[r] & K\ar[r]^{g}\ar@{.>}[d]_{q} & X\oplus P\ar[r]^{(f,\pi)}\ar[d]^{(m,m')} & j_{*}(T_{C})\ar[r]\ar[d]^{p} & 0&\\
& 0\ar[r] &i_{*}i^{!}(M)\ar[r]^{\sigma_{M}} & M\ar[r]^{\varepsilon_{M}} & j_{*}j^{*}(M)\ar[r] & 0.&}$$

Note that $i^{!}(M)\in \mathcal {T}_{\mathcal {B}}$ and $h: K\longrightarrow i_{*}(T_{B})$ is the left $i_{*}(\mathcal {T}_{\mathcal {B}})$-approximation of $K$, thus there exists $r: i_{*}(T_{B})\longrightarrow i_{*}i^{!}(M)$ such that $q=rh$. Then it follows from the left square of the above diagram that $(m, m', -\sigma_{M}\cdot r)\left(
 \begin{array}{ccc}
 g\\
 h\\
 \end{array}
\right)=(m, m')g-\sigma_{M}\cdot rh=\sigma_{M}\cdot q-\sigma_{M}\cdot rh=0$. From the pushout diagram we have an exact sequence $$0\longrightarrow K\buildrel {\left(
 \begin{array}{ccc}
 g\\
 h\\
 \end{array}
\right)} \over\longrightarrow X\oplus P\oplus i_{*}(T_{B})\buildrel {(l, l', k)} \over\longrightarrow T\longrightarrow 0,$$ thus there exists $s: T\longrightarrow M$ such that $(m, m', -\sigma_{M}\cdot r)=s(l, l', k)$. Therefore $m=sl$. We have finished to prove that $l: X\longrightarrow T$ is the left $\mathcal {T}_{\mathcal {A}}$-approximation of $X$. It follows that $\mathcal {T}_{\mathcal {A}}$ is covariantly finite in $\mathcal {A}$.
\end{proof}

We apply the above result to a symmetric ladder of abelian categories of height 2 and get the following
\begin{corollary}
Let $\mathcal {R}$ be a symmetric ladder of height $2$
\[
\xymatrix@C=5pc{\mathcal {B}
\ar@<+1ex>[r]|{i_{0}} \ar@<-3ex>[r]|{i_{-2}} &\mathcal {A} \ar@<1ex>[l]|{i_{-1}}
\ar@<-3ex>[l]|{i_{1}}
 \ar@<+1ex>[r]|{j_{0}} \ar@<-3ex>[r]|{j_{-2}} & \mathcal {C},\ar@<1ex>[l]|{j_{-1}} \ar@<-3ex>[l]|{j_{1}}}
\]
and $\mathcal {T}_{\mathcal {B}}\subseteq \mathcal {B}$, $\mathcal {T}_{\mathcal {C}}\subseteq \mathcal {C}$, $\mathcal {T}_{\mathcal {A}}=\{T_{A}\in \mathcal {A}|j_{0}(T_{A})\in\mathcal {T}_{\mathcal {C}}, i_{-1}(T_{A})\in\mathcal {T}_{\mathcal {B}}\}\subseteq \mathcal {A}$. We have
\begin{enumerate}
\item If $\mathcal {A}$ has enough projective objects, then $\mathcal {T}_{\mathcal {B}}$ and $\mathcal {T}_{\mathcal {C}}$ are covariantly finite if and only if $\mathcal {T}_{\mathcal {A}}$ is covariantly finite;
\item If $\mathcal {A}$ has enough injective objects, then $\mathcal {T}_{\mathcal {B}}$ and $\mathcal {T}_{\mathcal {C}}$ are contravariantly finite if and only if $\mathcal {T}_{\mathcal {A}}$ is contravariantly finite;
\item If $\mathcal {A}$ has enough projective and injective objects, then $\mathcal {T}_{\mathcal {B}}$ and $\mathcal {T}_{\mathcal {C}}$ are functorially finite if and only if $\mathcal {T}_{\mathcal {A}}$ is functorially finite.
\end{enumerate}
\end{corollary}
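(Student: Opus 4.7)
The plan is to derive Corollary 3.2 from Proposition 3.1 by applying the latter to the two recollements that make up the symmetric ladder of height~$2$, and by showing that the exactness hypotheses required by Proposition 3.1 are automatic in this setting.

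First, I identify the two recollements inside the ladder. The upper recollement is $R(\mathcal{B},\mathcal{A},\mathcal{C})$ with data $(i^{*},i_{*},i^{!},j_{!},j^{*},j_{*})=(i_{-1},i_{0},i_{1},j_{-1},j_{0},j_{1})$. Reading the next three consecutive rows of the ladder, the lower recollement is $R(\mathcal{C},\mathcal{A},\mathcal{B})$ (with the roles of $\mathcal{B}$ and $\mathcal{C}$ interchanged) with data $(\bar{i}^{*},\bar{i}_{*},\bar{i}^{!},\bar{j}_{!},\bar{j}^{*},\bar{j}_{*})=(j_{-2},j_{-1},j_{0},i_{-2},i_{-1},i_{0})$. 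The essential extra feature of height~$2$ is that $i_{-1}$ and $j_{0}$ each sit inside an adjoint triple ($i_{-2}\dashv i_{-1}\dashv i_{0}$ and $j_{-1}\dashv j_{0}\dashv j_{1}$), so each is simultaneously a left and a right adjoint, and hence exact. Equivalently, by Remark~2.9(1), $i_{-1}=\bar{j}^{*}$ and $j_{0}=j^{*}$ are the quotient functors of the lower and upper recollements respectively, and such quotient functors are always exact.

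Second, I match the corollary's subcategory $\mathcal{T}_{\mathcal{A}}=\{T\in\mathcal{A}|j_{0}(T)\in\mathcal{T}_{\mathcal{C}}, i_{-1}(T)\in\mathcal{T}_{\mathcal{B}}\}$ with the glued subcategories appearing in Proposition~3.1. For the upper recollement, $\mathcal{T}_{\mathcal{A}}$ is exactly the set $\mathcal{M}_{\mathcal{A}}=\{M|j^{*}(M)\in\mathcal{T}_{\mathcal{C}}, i^{*}(M)\in\mathcal{T}_{\mathcal{B}}\}$ of Proposition~3.1(2), because $j^{*}=j_{0}$ and $i^{*}=i_{-1}$. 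For the lower recollement, after swapping the two outer categories in the statement of Proposition~3.1, $\mathcal{T}_{\mathcal{A}}$ is the set $\{T|\bar{j}^{*}(T)\in\mathcal{T}_{\mathcal{B}}, \bar{i}^{!}(T)\in\mathcal{T}_{\mathcal{C}}\}$ of Proposition~3.1(1), because $\bar{j}^{*}=i_{-1}$ and $\bar{i}^{!}=j_{0}$.

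Finally, each of the three statements follows from one invocation of Proposition~3.1. For (1), I apply Proposition~3.1(1) to the lower recollement: its exactness hypothesis ``$i^{!}$ exact'' becomes the exactness of $\bar{i}^{!}=j_{0}$, which holds by the first step, while ``enough projectives'' is part of the assumption. For (2), I apply Proposition~3.1(2) to the upper recollement: the hypothesis ``$i^{*}$ exact'' becomes the exactness of $i_{-1}$, again supplied by the first step. Part (3) follows by combining (1) and (2). The only real book-keeping is the swap of $\mathcal{B}$ and $\mathcal{C}$ when Proposition~3.1 is applied to the lower recollement, and this indexing is the main place where a careless reading could go wrong; once it is made explicit the result is immediate.
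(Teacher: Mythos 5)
Your overall strategy is the same as the paper's: read off the two recollements contained in the ladder, observe that the middle functors $i_{-1}$ and $j_{0}$ are exact because each sits inside an adjoint triple, and invoke Proposition 3.1. However, your identification of the recollement data is systematically reversed, and as a consequence you apply each part of Proposition 3.1 to the wrong recollement. In the convention used here (confirmed by the concrete ladder $(\mathcal{L})$, where $i_{0}=?\otimes_{B}e_{1}A$ is \emph{left} adjoint to $i_{-1}=?\otimes_{A}Ae_{1}$, and by the proof of Theorem 4.4, which uses the adjoint pair $(j_{1},j_{0})$), the adjoint strings run $i_{1}\dashv i_{0}\dashv i_{-1}\dashv i_{-2}$ and $j_{1}\dashv j_{0}\dashv j_{-1}\dashv j_{-2}$. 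Hence the upper recollement $R(\mathcal{B},\mathcal{A},\mathcal{C})$ has $(i^{*},i_{*},i^{!};j_{!},j^{*},j_{*})=(i_{1},i_{0},i_{-1};j_{1},j_{0},j_{-1})$ --- you have $i^{*}$ and $i^{!}$ (and $j_{!}$, $j_{*}$) interchanged --- and the lower recollement $R(\mathcal{C},\mathcal{A},\mathcal{B})$ has $(i^{*},i_{*},i^{!};j_{!},j^{*},j_{*})=(j_{0},j_{-1},j_{-2};i_{0},i_{-1},i_{-2})$, again the reverse of what you wrote.

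This matters because Proposition 3.1(1) applied to the lower recollement, correctly read, concerns the subcategory $\{T\mid i_{-1}(T)\in\mathcal{T}_{\mathcal{B}},\ j_{-2}(T)\in\mathcal{T}_{\mathcal{C}}\}$ (not the $\mathcal{T}_{\mathcal{A}}$ of the corollary) and requires $i^{!}=j_{-2}$ to be exact, which the ladder does not provide: $j_{-2}$ is the bottom rung, has no right adjoint, and is only left exact in general. Likewise Proposition 3.1(2) applied to the upper recollement concerns $\{M\mid i_{1}(M)\in\mathcal{T}_{\mathcal{B}},\ j_{0}(M)\in\mathcal{T}_{\mathcal{C}}\}$ and needs $i^{*}=i_{1}$ exact, which again is not guaranteed. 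The repair is simply to swap your two invocations: part (1) follows from Proposition 3.1(1) applied to the \emph{upper} recollement, where the glued class is exactly $\mathcal{T}_{\mathcal{A}}$ and $i^{!}=i_{-1}$ is exact because of the extra right adjoint $i_{-2}$; part (2) follows from Proposition 3.1(2) applied to the \emph{lower} recollement, where $\mathcal{M}_{\mathcal{A}}=\mathcal{T}_{\mathcal{A}}$ after the $\mathcal{B}$/$\mathcal{C}$ swap and $i^{*}=j_{0}$ is exact because of the extra left adjoint $j_{1}$. With that correction your argument coincides with the paper's.
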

\begin{proof}
If we apply Proposition 3.1(1) to the upper recollement, we get (1). Dually,
if we apply Proposition 3.1(2) to the lower recollement, we get (2). Combine (1) and (2), we get (3).
\end{proof}
Note that the construction of $\mathcal {T}_{\mathcal {A}}$ in Corollary 3.2 has close relation to the construction of gluing torsion classes in [MH]. We show that gluing torsion classes can be restricted to functorially finiteness condition.
\begin{theorem}
Let $\mathcal {R}$ be a symmetric ladder of height $2$
\[
\xymatrix@C=5pc{\mathcal {B}
\ar@<+1ex>[r]|{i_{0}} \ar@<-3ex>[r]|{i_{-2}} &\mathcal {A} \ar@<1ex>[l]|{i_{-1}}
\ar@<-3ex>[l]|{i_{1}}
 \ar@<+1ex>[r]|{j_{0}} \ar@<-3ex>[r]|{j_{-2}} & \mathcal {C}.\ar@<1ex>[l]|{j_{-1}} \ar@<-3ex>[l]|{j_{1}}}
\]
Suppose $\mathcal {T}_{\mathcal {B}}$ and $\mathcal {T}_{\mathcal {C}}$ are functorially finite torsion classes in $\mathcal {B}$ and $\mathcal {C}$. If $\mathcal {A}$ has enough projective objects, then $\mathcal {T}_{\mathcal {A}}=\{T_{A}\in \mathcal {A}|j_{0}(T_{A})\in\mathcal {T}_{\mathcal {C}}, i_{-1}(T_{A})\in\mathcal {T}_{\mathcal {B}}\}$ is a functorially finite torsion class in $\mathcal {A}$.
\end{theorem}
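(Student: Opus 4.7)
My plan splits the proof into three steps corresponding to the three properties required of $\mathcal{T}_{\mathcal{A}}$: closure under factor objects and extensions, covariant finiteness, and contravariant finiteness.

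The torsion-class property is almost immediate. In a height-$2$ symmetric ladder the quotient $j_0$ is exact and, because it plays the role of $i^{!}$ in one recollement while simultaneously being a fully faithful inclusion $i_{*}$ of the other, the functor $i_{-1}$ is also exact — it is both left and right exact. Together with the closure of $\mathcal{T}_{\mathcal{B}}$ and $\mathcal{T}_{\mathcal{C}}$ under factors and extensions this transports to the corresponding closure of $\mathcal{T}_{\mathcal{A}}$ by a direct diagram chase on any short exact sequence or epimorphism in $\mathcal{A}$. Covariant finiteness is then an instance of Corollary~3.2(1): $\mathcal{A}$ has enough projectives and both component torsion classes are a fortiori covariantly finite.

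The main obstacle is contravariant finiteness, because Corollary~3.2(2) would demand enough injectives in $\mathcal{A}$, which is not an assumption here. My plan is to dualize the pushout argument from the proof of Proposition~3.1(1), using the extra adjoints of the height-$2$ ladder to take the place of the projective-cover step. Concretely, for $X \in \mathcal{A}$, I would first choose a right $\mathcal{T}_{\mathcal{C}}$-approximation $\alpha\colon T_C \to j_0 X$, apply the right adjoint $j_{*}$ of $j_0$, and form the pullback $P$ of $j_{*}\alpha\colon j_{*} T_C \to j_{*} j_0 X$ against the unit $\eta_X\colon X \to j_{*} j_0 X$; exactness of $j_0$ and $i_{-1}$ together with the identities $j^{*} j_{*}\cong \mathrm{id}$ and $i^{!} j_{*} = 0$ force $j_0 P \cong T_C$ and $i_{-1}P \cong i_{-1}X$. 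Next I would pick a right $\mathcal{T}_{\mathcal{B}}$-approximation $\beta\colon T_B \to i_{-1}X$, transport it via the adjunction $i_0 \dashv i_{-1}$ into a morphism $i_0 T_B \to P$, and form a further pullback that replaces the $i_0 i_{-1}X$-part of $P$ by $i_0 T_B$. The resulting object $T_A$ then satisfies $j_0 T_A \cong T_C$ and $i_{-1}T_A \cong T_B$, hence lies in $\mathcal{T}_{\mathcal{A}}$, and the composition $T_A \to P \to X$ is the candidate right $\mathcal{T}_{\mathcal{A}}$-approximation.

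Verifying the universal property of $T_A \to X$ proceeds exactly as in the closing diagram chase of Proposition~3.1(1): any $\psi\colon M \to X$ with $M \in \mathcal{T}_{\mathcal{A}}$ decomposes through $j_0\psi$ and $i_{-1}\psi$, each of which factors through $\alpha$ or $\beta$ respectively by the right-approximation property, and these lifts assemble through the two pullbacks into a morphism $M \to T_A$ through which $\psi$ factors. Beyond what Corollary~3.2(1) supplies, the essential inputs are the full functorial finiteness of $\mathcal{T}_{\mathcal{B}}$ and $\mathcal{T}_{\mathcal{C}}$ — used exactly to produce the right approximations $\alpha$ and $\beta$ — and the exactness of both $j_0$ and $i_{-1}$ furnished by the ladder, which together allow the pullback construction to replace the injective-cover argument that Corollary~3.2(2) would require.
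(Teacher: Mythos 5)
Your first two steps coincide with the paper's: $\mathcal{T}_{\mathcal{A}}$ is a torsion class because $j_{0}$ and $i_{-1}$ are exact (this is [LVY, Proposition 6.5] or [MH, Theorem 1]), and covariant finiteness is exactly Corollary 3.2(1). The gap is in your treatment of contravariant finiteness. The ``further pullback that replaces the $i_{0}i_{-1}X$-part of $P$ by $i_{0}T_{B}$'' is not a valid construction: from the extension $0\to i_{0}i_{-1}X\to P\to j_{-1}T_{C}\to 0$ and the morphism $i_{0}\beta\colon i_{0}T_{B}\to i_{0}i_{-1}X$ pointing \emph{into} the subobject, neither a pullback (which alters the quotient term) nor a pushout (which requires a map \emph{out of} the subobject) yields an extension of $j_{-1}T_{C}$ by $i_{0}T_{B}$ mapping to $P$; that would amount to lifting the class of $P$ along $\Ext^{1}_{\mathcal{A}}(j_{-1}T_{C}, i_{0}T_{B})\to \Ext^{1}_{\mathcal{A}}(j_{-1}T_{C}, i_{0}i_{-1}X)$, which fails in general. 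This is precisely the point where the dual of the pushout argument of Proposition 3.1(1) needs an injective object in place of the projective cover, and the ``extra adjoints of the ladder'' do not supply a substitute. The target of the construction is also off: a right $\mathcal{T}_{\mathcal{A}}$-approximation of $X$ is realized by a subobject of $X$ (its torsion subobject $tX$), and the components $j_{0}(tX)$, $i_{-1}(tX)$ need not agree with the right approximations of $j_{0}X$ and $i_{-1}X$; for instance for $A=k(1\leftarrow 2)$, $\mathcal{T}_{\mathcal{B}}=0$, $\mathcal{T}_{\mathcal{C}}=\mod C$ and $X=P_{2}$ one has $tX=0$ although the right $\mathcal{T}_{\mathcal{C}}$-approximation of $j_{0}X$ is an identity map.

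The repair is much simpler and is what the paper does: once $\mathcal{T}_{\mathcal{A}}$ is known to be a torsion class, contravariant finiteness is automatic by Smal$\o$ [S1] --- the inclusion $tX\hookrightarrow X$ of the torsion subobject is a right $\mathcal{T}_{\mathcal{A}}$-approximation, since the image of any map $M\to X$ with $M\in\mathcal{T}_{\mathcal{A}}$ lies in $\mathcal{T}_{\mathcal{A}}$ (closure under quotients) and hence in $tX$. No right approximations in $\mathcal{B}$ or $\mathcal{C}$, and no enough-injectives hypothesis, are needed for this half of functorial finiteness.
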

\begin{proof}
Observe the form of $\mathcal {T}_{\mathcal {A}}$ in the lower recollement, we have $\mathcal {T}_{\mathcal {A}}$ is a torsion class by [MH, Theorem 1]. By [S1] it is well known that $\mathcal {T}_{\mathcal {A}}$ is contravariantly finite since $\mathcal {T}_{\mathcal {A}}$ is a torsion class. Then by Corollary 3.2(1), it is easy to get the result.
\end{proof}
Let $k$ be a field, $A, B$ and $C$ are finite-dimensional $k$ algebras. Considering ladders of module categories, we have
\begin{corollary}
Let $\mathcal {R}$ be a symmetric ladder of module categories of height $2$
\[
\xymatrix@C=5pc{\mod B
\ar@<+1ex>[r]|{i_{0}} \ar@<-3ex>[r]|{i_{-2}} &\mod A \ar@<1ex>[l]|{i_{-1}}
\ar@<-3ex>[l]|{i_{1}}
 \ar@<+1ex>[r]|{j_{0}} \ar@<-3ex>[r]|{j_{-2}} & \mod C.\ar@<1ex>[l]|{j_{-1}} \ar@<-3ex>[l]|{j_{1}}}
\]
Suppose $\mathcal {T}_{B}$ and $\mathcal {T}_{C}$ are functorially finite torsion classes in $\mod B$ and $\mod C$. Then $\mathcal {T}_{A}=\{T\in \mod A|j_{0}(T)\in\mathcal {T}_{C}, i_{-1}(T)\in\mathcal {T}_{B}\}$ is a functorially finite torsion class in $\mod A$.
\end{corollary}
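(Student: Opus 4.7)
The plan is to recognize Corollary 3.5 as a direct specialization of Theorem 3.4 to module categories over finite-dimensional algebras. Since Theorem 3.4 has just been established in the excerpt, the task reduces to verifying that the single remaining hypothesis --- existence of enough projective objects in the middle category --- is automatic in the module-theoretic setting.

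First, I would observe that for any finite-dimensional $k$-algebra $A$, the abelian category $\mod A$ of finitely generated right $A$-modules has enough projective objects: every finitely generated $A$-module is a quotient of a finitely generated free module $A^n$, and in fact (since $A$ is semiperfect) projective covers exist in $\mod A$. This is exactly the structural hypothesis required to invoke Theorem 3.4.

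Next, the hypotheses on the torsion data are built into the statement: $\mathcal{T}_B \subseteq \mod B$ and $\mathcal{T}_C \subseteq \mod C$ are assumed to be functorially finite torsion classes. The symmetric ladder $\mathcal{R}$ of height $2$ identifies the lower recollement whose adjoints $j_0$ and $i_{-1}$ appear in the definition of the glued subcategory $\mathcal{T}_A = \{T \in \mod A \mid j_0 T \in \mathcal{T}_C,\; i_{-1} T \in \mathcal{T}_B\}$. Applying Theorem 3.4 to this ladder with the given data immediately yields that $\mathcal{T}_A$ is a functorially finite torsion class in $\mod A$, which is the desired conclusion.

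Because Corollary 3.5 is a clean specialization, I do not expect any substantive obstacle. The only mild point worth stating carefully is that the glued class is formed from the lower recollement of the ladder (the one controlling the ``$j_0/i_{-1}$'' directions), matching the hypothesis of Theorem 3.4; this is a notational check rather than a mathematical difficulty. All the nontrivial content --- namely that covariant finiteness can be glued when the middle category has enough projectives, and that contravariant finiteness is automatic for torsion classes via [S1] --- has already been packaged into Theorem 3.4.
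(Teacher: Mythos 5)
Your proposal is correct and matches the paper's approach exactly: the paper states this corollary as an immediate consequence of the preceding gluing theorem, and the only content is the observation you make, namely that $\mod A$ has enough projectives for a finite-dimensional algebra $A$, so the theorem's hypothesis is automatic. Nothing further is needed.
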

Let $B$ and $C$ be finite-dimensional $k$ algebras and $M$ a finitely generated $C$-$B$-bimodule. Consider the matrix algebra $A=\left(\begin{array}{cc} B & 0\\ {}_CM_B & C\end{array}\right)$, see [ARS, Pages 73-75]. A right $A$-module is identified with a triple $(X, Y)_{f}$, or simply $(X, Y)$ if $f$ is clear, where $X$ is a right $B$-module, $Y$ is a right $C$-module, and $f: Y\otimes _{C}M\longrightarrow X$ is a right $B$-map. The morphisms from $(X, Y)_{f}$ to $(X', Y')_{f'}$ are pairs of $(\alpha, \beta)$ such that the following diagram
$$\xymatrix{
& Y\otimes _{C}M\ar[rr]^{f}\ar[d]_{\beta \otimes id_M} && X\ar[d]^{\alpha}&\\
& Y'\otimes _{C}M\ar[rr]^{f'} && X'&}$$
commutes, where $\alpha\in \Hom_{B}(X, X')$ and $\beta\in \Hom_{C}(Y, Y')$. Moreover, a sequence $$0\longrightarrow (X_{1}, Y_{1})\buildrel{(\alpha_{1}, \beta_{1})}\over\longrightarrow(X_{2}, Y_{2})\buildrel{(\alpha_{2}, \beta_{2})}\over\longrightarrow (X_{3}, Y_{3})\longrightarrow0$$ in $\mod A$ is exact if and only if $0\longrightarrow X_{1}\buildrel{\alpha_{1}}\over\longrightarrow X_{2}\buildrel{\alpha_{2}}\over\longrightarrow X_{3}\longrightarrow0$ is exact in $\mod B$ and $0\longrightarrow Y_{1}\buildrel{\beta_{1}}\over\longrightarrow Y_{2}\buildrel{\beta_{2}}\over\longrightarrow Y_{3}\longrightarrow0$ is exact in $\mod C$.

Put $e_1=\left(\begin{array}{cc} 1 & 0\\ 0 & 0\end{array}\right)$ and $e_2=\left(\begin{array}{cc} 0 & 0\\ 0 & 1\end{array}\right)$. They produce a symmetric ladder of height $2$ (see [AKLY, Example 3.4], [FZ, Section 4.5], [H, Example 1], [PV, Example 2.10])
\begin{align*}
\xymatrix@C=5pc{\mod B
\ar@<+1ex>[r]|{inc} \ar@<-3ex>[r] &\mod A \ar@<1ex>[l]|{?\ten_A Ae_1}
\ar@<-3ex>[l]
 \ar@<+1ex>[r]|{?\ten_A Ae_2} \ar@<-3ex>[r] & \mod
C.\ar@<1ex>[l]|{inc} \ar@<-3ex>[l]|{?\ten_C e_2A}}
\tag{$\mathcal {L}$}\end{align*}
\vspace{0.2cm}

As an application of Corollary 3.2, we deduce Smal$\o$'s result.
\begin{corollary}{\rm{([S2, Theorem 2.1])}}
Let $B, C, _CM_B $, and $A$ be as above and let $\mathcal {T}_{B}\subset \mod B$, $\mathcal {T}_{C}\subset \mod C$. Take $\mathcal {T}_{A}$ the full subcategory of $\mod A$ consisting of the modules of the form $(X, Y)_{f}$ where $X\in \mathcal {T}_{B}$ and $Y\in \mathcal {T}_{C}$.
\begin{enumerate}
\item $\mathcal {T}_{B}$ and $\mathcal {T}_{C}$ are covariantly finite if and only if $\mathcal {T}_{A}$ is covariantly finite;
\item $\mathcal {T}_{B}$ and $\mathcal {T}_{C}$ are contravariantly finite if and only if $\mathcal {T}_{A}$ is contravariantly finite;
\item $\mathcal {T}_{B}$ and $\mathcal {T}_{C}$ are functorially finite if and only if $\mathcal {T}_{A}$ is functorially finite.
\end{enumerate}
In particular, if $\mathcal {T}_{B}$ and $\mathcal {T}_{C}$ are functorially finite torsion classes, then $\mathcal {T}_{A}$ is a functorially finite torsion class.
\end{corollary}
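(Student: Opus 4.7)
The plan is to derive this as a direct application of Corollary~3.2 to the symmetric ladder $(\mathcal{L})$ of height $2$ attached to the triangular matrix algebra $A$. First I would spell out what the ladder functors do on objects of $\mod A$. Identifying a module with a triple $(X,Y)_{f}$, a routine computation with the idempotents $e_{1},e_{2}$ shows that $i_{-1}={?}\otimes_{A}Ae_{1}$ sends $(X,Y)_{f}$ to its first component $X$, and $j_{0}={?}\otimes_{A}Ae_{2}$ sends $(X,Y)_{f}$ to its second component $Y$. Consequently, Smalø's subcategory, namely the collection of all $(X,Y)_{f}$ with $X\in\mathcal{T}_{B}$ and $Y\in\mathcal{T}_{C}$, coincides on the nose with the glued subcategory
$$\mathcal{T}_{A}=\{T\in\mod A \mid j_{0}T \in \mathcal{T}_{C},\; i_{-1}T \in \mathcal{T}_{B}\}$$
appearing in Corollary~3.2.

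Once this identification is in place, parts (1), (2), (3) follow immediately from the corresponding parts of Corollary~3.2. The module category $\mod A$ of a finite-dimensional $k$-algebra has enough projective and enough injective objects, so all hypotheses of Corollary~3.2 are automatically satisfied, and the three ``if and only if'' statements pass through with no further work. This is the cleanest route because it sidesteps re-proving the approximation existence by hand in the triangular-matrix setting; that technical content has already been packaged into Proposition~3.1 by the pushout construction of left $\mathcal{T}_{\mathcal{A}}$-approximations, and Corollary~3.2 specialises it to symmetric ladders of height $2$.

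For the ``in particular'' clause, I would invoke Corollary~3.4 (which rests on Theorem~3.3): gluing two functorially finite torsion classes via the lower recollement of $(\mathcal{L})$ produces a functorially finite torsion class in $\mod A$. The closure of $\mathcal{T}_{A}$ under quotients and extensions is the general recollement fact cited in the paper from [LVY, Proposition 6.5] or [MH, Theorem 1], which applies because $i_{-1}$ and $j_{0}$ are exact — they sit inside adjoint triples in a height-$2$ ladder — and functorial finiteness is inherited from part (3) of the present statement. I do not anticipate any serious obstacle: the only genuinely new bookkeeping is the identification of $?\otimes_{A}Ae_{i}$ with the $i$-th coordinate projection, and everything else is a specialisation of already-established results.
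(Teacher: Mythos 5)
Your proposal is correct and follows essentially the same route as the paper: identify $i_{-1}={?}\otimes_{A}Ae_{1}$ and $j_{0}={?}\otimes_{A}Ae_{2}$ with the coordinate projections $(X,Y)_{f}\mapsto X$ and $(X,Y)_{f}\mapsto Y$, so that Smal\o's subcategory coincides with the glued one, and then quote Corollary~3.2 for (1)--(3) and Corollary~3.4 for the torsion-class statement. This matches the paper's proof of Corollary~3.5 verbatim in substance.
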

\begin{proof}
By Corollary 3.2 and Corollary 3.4, we only have to prove that $$\mathcal {T}_{A}=\{T\in \mod A|T\ten_A Ae_2\in \mathcal {T}_{\mathcal {C}}, T\ten_A Ae_1\in \mathcal {T}_{\mathcal {B}}\}.$$ Assume that $T=(X, Y)_{f}$. Then $T\ten_A Ae_2\cong Te_{2}\cong Y$ and $T\ten_A Ae_1\cong Te_{1}\cong X$. The result is immediate.
\end{proof}

\section{Gluing support $\tau$-tilting modules}
\medskip
Throughout this section, let $A, B$ and $C$ be finite-dimensional algebras over a field and $\mathcal {R}$ a symmetric ladder of $\mod A$ relative to $\mod B$ and $\mod C$ of height $2$ as follows
\vspace{0.2cm}
\[
\xymatrix@C=5pc{\mod B
\ar@<+1ex>[r]|{i_{0}} \ar@<-3ex>[r]|{i_{-2}} &\mod A \ar@<1ex>[l]|{i_{-1}}
\ar@<-3ex>[l]|{i_{1}}
 \ar@<+1ex>[r]|{j_{0}} \ar@<-3ex>[r]|{j_{-2}} & \mod C.\ar@<1ex>[l]|{j_{-1}} \ar@<-3ex>[l]|{j_{1}}}
\]
\vspace{0.2cm}

By [GKP, Propositions 3.1 and 3.2] symmetric ladders of module categories of height 2 only can exist for triangular matrix algebras. Therefore $A$ is exactly the matrix algebra $\left(\begin{array}{cc} B & 0\\ {}_CM_B & C\end{array}\right)$ and,  $i_{0}$ and $j_{-1}$ are inclusion functors (see ($\mathcal {L}$) in Section 3). In this section, we mainly use a symmetric ladder as a tool to construct support $\tau$-tilting modules. More precise results about support $\tau$-tilting modules over triangular matrix algebras will be discussed in the next section.

In [Zh, Theorem 0.1], we gave the gluing of semibricks with respect to a recollement. We use $``\sqcup"$ to denote disjoint union. Here we mainly concern gluing semibricks via the upper recollement of a symmetric ladder of height 2, since gluing semibricks via the upper recollement can be restricted to left finiteness condition.
\begin{proposition}
Let $\mathcal {R}$ be a symmetric ladder of $\mod A$ relative to $\mod B$ and $\mod C$ of height $2$. If $\mathcal {S}_{X}\in \opname{sbrick}B$ and $\mathcal {S}_{Y}\in \opname{sbrick}C$, then we have $\mathcal {S}_{Z}=i_{0}(\mathcal {S}_{X})\sqcup j_{-1}(\mathcal {S}_{Y})\in \opname{sbrick}A$. More precisely, $\mathcal {S}_{Z}=\{(S, 0)|S\in \mathcal {S}_{X}\}\sqcup \{(0, S')|S'\in \mathcal {S}_{Y}\}\cong \mathcal {S}_{X}\sqcup \mathcal {S}_{Y}$.
\end{proposition}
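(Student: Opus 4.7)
The plan is to verify the two defining conditions of a semibrick for $\mathcal{S}_Z$: each element is a brick, and $\Hom_A$ vanishes on every ordered pair of distinct elements. In the upper recollement of the ladder, $i_0$ is the fully faithful embedding $i_*$, and $j_{-1}$ is a fully faithful functor from $\mod C$ into $\mod A$ (playing, in this ladder setting, the role of the intermediate extension functor used in [Zh, Theorem 3.3]). Full faithfulness immediately yields the brick property, $\End_A(i_0 X) \cong \End_B(X)$ and $\End_A(j_{-1} Y) \cong \End_C(Y)$ being division rings, together with the within-side vanishings $\Hom_A(i_0 X_1, i_0 X_2) \cong \Hom_B(X_1, X_2) = 0$ and $\Hom_A(j_{-1} Y_1, j_{-1} Y_2) \cong \Hom_C(Y_1, Y_2) = 0$ for distinct elements of $\mathcal{S}_X$ and $\mathcal{S}_Y$ respectively.

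The two cross Homs require separate arguments. For $\Hom_A(i_0 X, j_{-1} Y)$, I would apply the adjunction $j_0 \dashv j_{-1}$ of the upper recollement, rewriting this as $\Hom_C(j_0 i_0 X, Y)$, and then invoke the standard recollement identity $j_0 i_0 = 0$ from Remark 2.9 to conclude it equals $\Hom_C(0, Y) = 0$. The opposite direction $\Hom_A(j_{-1} Y, i_0 X)$ is the delicate step, as no upper-recollement adjunction places $j_{-1} Y$ on the target side of a Hom into an obviously zero object. Here I would invoke the height-2 ladder hypothesis: in this setting $j_{-1}$ fits into a canonical epimorphism $j_1 Y \twoheadrightarrow j_{-1} Y$, where $j_1 = ?\ten_C e_2 A = j_!$ in the triangular matrix picture that realizes such ladders. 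Precomposition with this epimorphism embeds $\Hom_A(j_{-1} Y, i_0 X)$ into $\Hom_A(j_1 Y, i_0 X) \cong \Hom_C(Y, j_0 i_0 X) = 0$, the isomorphism coming from $j_1 \dashv j_0$ and the vanishing again from $j_0 i_0 = 0$.

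The main obstacle is precisely this second cross-vanishing, because it is not a formal consequence of a plain recollement. The structural input that makes it work is the epimorphism $j_1 \twoheadrightarrow j_{-1}$ — equivalently, the identification of $j_{-1}$ with the intermediate extension of [Zh] — and this is where the height-2 hypothesis of the ladder genuinely enters the argument. Once it is in hand, the four Hom vanishings combine with full faithfulness to establish $\mathcal{S}_Z \in \opname{sbrick} A$.
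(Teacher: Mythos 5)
Your proof is correct and rests on the same key point as the paper's: the height-2 hypothesis forces the canonical map $j_{1}Y\to j_{-1}Y$ to be an epimorphism, i.e.\ $j_{-1}$ coincides with the intermediate extension functor of the upper recollement, and this is precisely what kills $\Hom_{A}(j_{-1}Y, i_{0}X)$. The paper obtains the epimorphism abstractly (exactness of $i_{-1}$ gives $i_{1}j_{-1}=0$ by [FP, Proposition 8.8], so the cokernel $i_{0}i_{1}j_{-1}$ of $j_{1,-1}\to j_{-1}$ vanishes) and then cites [Zh, Theorem 3.3] wholesale, whereas you justify the epimorphism via the triangular-matrix realization and verify the brick conditions and the four Hom-vanishings by hand; both routes are valid.
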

\begin{proof}
Since $i_{-1}$ is exact, considering the upper recollement, we can get the result by [Zh, Corollary 2.1].
\end{proof}
Here we can describe that gluing functorially finite torsion classes via the lower recollement that is compatible with the bijection between functorially finite torsion classes and left finite semibricks. As a result, we show that gluing semibricks can be restricted to left finiteness condition in this case. Before doing this, the following observation is useful for proving Theorem 4.3.
\begin{lemma}
If the functor $i: \mod A\longrightarrow \mod B$ is exact and $\mathcal {T}\subset \mod A$, then we have $i(\opname{Filt}(\opname{Fac}\mathcal {T}))\subset \opname{Filt}(\opname{Fac}(i(\mathcal {T})))$.
\end{lemma}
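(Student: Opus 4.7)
The plan is to unwind the two operators $\Fac$ and $\Filt$ in turn and observe at each step that an exact functor behaves well with respect to them. The key point is that exactness of $i$ implies, in particular, that $i$ is additive (so preserves finite direct sums and direct summands, and therefore $\add$) and preserves epimorphisms and short exact sequences.

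First I would check the auxiliary inclusion $i(\Fac \mathcal{T}) \subset \Fac(i(\mathcal{T}))$. Given $N \in \Fac \mathcal{T}$, by definition there is some $T \in \add \mathcal{T}$ and an epimorphism $T \twoheadrightarrow N$ in $\mod A$. Applying $i$ and using that $i$ preserves epimorphisms and carries $\add \mathcal{T}$ into $\add(i(\mathcal{T}))$ yields an epimorphism $i(T) \twoheadrightarrow i(N)$ with $i(T) \in \add(i(\mathcal{T}))$, placing $i(N) \in \Fac(i(\mathcal{T}))$.

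Next I would check that $i(\Filt \mathcal{D}) \subset \Filt(i(\mathcal{D}))$ for any subcategory $\mathcal{D} \subset \mod A$. Given $M \in \Filt \mathcal{D}$ with a filtration $0 = M_0 \subset M_1 \subset \cdots \subset M_n = M$ having $M_k/M_{k-1} \in \add \mathcal{D}$, apply $i$ termwise. Each short exact sequence $0 \to M_{k-1} \to M_k \to M_k/M_{k-1} \to 0$ stays exact, so the $i(M_k)$ form an ascending filtration of $i(M)$ whose successive quotients are $i(M_k/M_{k-1}) \in i(\add \mathcal{D}) \subset \add(i(\mathcal{D}))$; hence $i(M) \in \Filt(i(\mathcal{D}))$.

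Finally I would combine these with the obvious monotonicity of $\Filt$ (if $\mathcal{D}_1 \subset \mathcal{D}_2$ then $\Filt \mathcal{D}_1 \subset \Filt \mathcal{D}_2$) to obtain the chain
\[
i(\Filt(\Fac \mathcal{T})) \;\subset\; \Filt(i(\Fac \mathcal{T})) \;\subset\; \Filt(\Fac(i(\mathcal{T}))),
\]
which is the desired statement. I do not expect a real obstacle here: the lemma is essentially a bookkeeping consequence of exactness of $i$. The only mildly subtle point is keeping the definitions of $\Fac$ and $\Filt$ aligned (both are defined via $\add$ of the generating family) so that additivity of $i$ really is enough to pass the ``$\add$'' inside each layer.
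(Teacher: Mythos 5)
Your proposal is correct and follows essentially the same route as the paper: apply the exact functor $i$ termwise to a filtration whose subquotients lie in $\opname{Fac}\mathcal{T}$, using exactness to preserve the subquotients and the (easy) inclusion $i(\opname{Fac}\mathcal{T})\subset \opname{Fac}(i(\mathcal{T}))$. Your version merely makes explicit, as separate steps, the two compatibilities that the paper's proof combines in one pass.
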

\begin{proof}
For any $X\in\opname{Filt}(\opname{Fac}\mathcal {T})$, there exists a sequence $0=X_{0}\subset X_{1}\subset \cdot \cdot \cdot \subset X_{s-1}\subset X_{s}=X$ with $X_{i}/X_{i-1}\in \add(\opname{Fac}\mathcal {T})$ for all $1\leq i\leq s$. Apply the exact functor $i$ to the above sequence, we have a sequence $0=i(X_{0})\subset i(X_{1})\subset \cdot \cdot \cdot \subset i(X_{s-1})\subset i(X_{s})=i(X)$ with $i(X_{i})/i(X_{i-1})\cong i(X_{i}/X_{i-1})\in i(\add(\opname{Fac}\mathcal {T}))\subset \add(\opname{Fac}(i(\mathcal {T})))$ for all $1\leq i\leq s$. Therefore $i(X)\in \opname{Filt}(\opname{Fac}(i(\mathcal {T})))$.
\end{proof}
\begin{theorem}
Let $\mathcal {R}$ be a symmetric ladder of $\mod A$ relative to $\mod B$ and $\mod C$ of height $2$.
If $\mathcal{T}_{X}$ and $\mathcal{T}_{Y}$ are respectively functorially finite torsion classes in $\opname{mod}B$ and $\opname{mod}C$, with the corresponding left finite semibricks $\mathcal {S}_{X}$ and $\mathcal {S}_{Y}$, then the glued semibrick $\mathcal {S}_{Z}=i_{0}(\mathcal {S}_{X})\sqcup j_{-1}(\mathcal {S}_{Y})$ in $\mod A$ is associated with the functorially finite torsion class $\mathcal{T}_{Z}=\{Z\in \mod A|j_{0}(Z)\in\mathcal {T}_{Y}, i_{-1}(Z)\in\mathcal {T}_{X}\}$. In particular, $\mathcal {S}_{Z}$ is a left finite semibrick.
\end{theorem}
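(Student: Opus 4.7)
The plan is to verify the equality $\mathcal{T}_Z = T(\mathcal{S}_Z) = \opname{Filt}(\opname{Fac}(\mathcal{S}_Z))$. Once this is established, the fact that $\mathcal{T}_Z$ is functorially finite (Corollary 3.4) combined with Proposition 2.7 will automatically yield that $\mathcal{S}_Z$ is a left finite semibrick, giving the ``in particular'' clause. Two preparatory facts are key: by Proposition 4.1, $\mathcal{S}_Z$ is already a semibrick in $\mod A$; and in a symmetric ladder of height $2$, each of $i_0, i_{-1}, j_0, j_{-1}$ admits both a left and a right adjoint within the ladder, hence is exact. In particular, [FZ, Lemma 3.1(4)] (used already in the proof of Proposition 3.1) applies to the upper recollement to give the canonical short exact sequence
$$0 \longrightarrow i_0 i_{-1} Z \longrightarrow Z \longrightarrow j_{-1} j_0 Z \longrightarrow 0 \qquad (\star)$$
for every $Z \in \mod A$.

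For the inclusion $T(\mathcal{S}_Z) \subseteq \mathcal{T}_Z$, since $\mathcal{T}_Z$ is a torsion class it suffices to verify $\mathcal{S}_Z \subseteq \mathcal{T}_Z$. For $s \in \mathcal{S}_X$, the recollement identities of Remark 2.9(2) give $j_0(i_0 s) = 0 \in \mathcal{T}_Y$ and $i_{-1}(i_0 s) \cong s \in \mathcal{T}_X$; for $t \in \mathcal{S}_Y$, they give $j_0(j_{-1} t) \cong t \in \mathcal{T}_Y$ and, since $i^! j_* = 0$, $i_{-1}(j_{-1} t) = 0 \in \mathcal{T}_X$. Hence both $i_0(\mathcal{S}_X)$ and $j_{-1}(\mathcal{S}_Y)$ satisfy the defining conditions of $\mathcal{T}_Z$.

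The substantive direction is $\mathcal{T}_Z \subseteq T(\mathcal{S}_Z)$. Given $Z \in \mathcal{T}_Z$ I would apply $(\star)$. For the right-hand term, $j_0 Z \in \mathcal{T}_Y = \opname{Filt}(\opname{Fac}(\mathcal{S}_Y))$ and $j_{-1}$ is exact, so Lemma 4.2 places $j_{-1} j_0 Z$ in $\opname{Filt}(\opname{Fac}(j_{-1}(\mathcal{S}_Y))) \subseteq T(\mathcal{S}_Z)$. Symmetrically, $i_{-1} Z \in \mathcal{T}_X = \opname{Filt}(\opname{Fac}(\mathcal{S}_X))$ together with $i_0$ exact forces $i_0 i_{-1} Z \in \opname{Filt}(\opname{Fac}(i_0(\mathcal{S}_X))) \subseteq T(\mathcal{S}_Z)$. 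Since torsion classes are closed under extensions, the sequence $(\star)$ yields $Z \in T(\mathcal{S}_Z)$.

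The only real obstacle is making the two applications of Lemma 4.2 legitimate on the end terms of $(\star)$, which requires simultaneously the exactness of $i_0$ and $j_{-1}$, together with the exactness of $i_{-1}$ needed to produce $(\star)$ itself. This is precisely where the height-$2$ hypothesis is essential, since it upgrades the one-sided exactness of $i^!$ and $j_*$ to honest exactness. Once this is in place the two inclusions assemble into $\mathcal{T}_Z = T(\mathcal{S}_Z)$, and the bijection of Proposition 2.7 closes the argument.
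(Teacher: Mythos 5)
Your proof is correct and follows essentially the same route as the paper's: both reduce to showing $T(\mathcal{S}_Z)=\mathcal{T}_Z$ and handle the substantive inclusion $\mathcal{T}_Z\subseteq T(\mathcal{S}_Z)$ via the canonical short exact sequence $0\to i_0i_{-1}Z\to Z\to j_{-1}j_0Z\to 0$ together with Lemma 4.2 and closure under extensions. The only (harmless) variation is in the easy inclusion, where you invoke minimality of $T(\mathcal{S}_Z)$ and check the bricks of $\mathcal{S}_Z$ directly against the defining conditions of $\mathcal{T}_Z$, whereas the paper applies Lemma 4.2 to $i_{-1}$ and $j_0$ on all of $\opname{Filt}(\opname{Fac}\mathcal{S}_Z)$; note also that the bijection between left finite semibricks and functorially finite torsion classes is Proposition 2.5, not 2.7.
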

\begin{proof}
By Proposition 2.5, we only have to prove that $T(\mathcal {S}_{Z})=\mathcal{T}_{Z}$.

We first prove $T(\mathcal {S}_{Z})\subset \mathcal{T}_{Z}$. For any $Z\in T(\mathcal {S}_{Z})=\opname{Filt}(\opname{Fac}(i_{0}(\mathcal {S}_{X})\sqcup j_{-1}(\mathcal {S}_{Y})))$,
since $i_{-1}$ is exact, by Lemma 4.2 and Proposition 2.5 we have $i_{-1}(Z)\in \opname{Filt}(\opname{Fac}(\mathcal {S}_{X}))=T(\mathcal {S}_{X})=\mathcal{T}_{X}$. Similarly, since $j_{0}$ is exact, by Lemma 4.2 and Proposition 2.5 we have $j_{0}(Z)\in \opname{Filt}(\opname{Fac}(\mathcal {S}_{Y}))=T(\mathcal {S}_{Y})=\mathcal{T}_{Y}$. Thus $Z\in \mathcal{T}_{Z}$.

Conversely, for any $Z\in \mathcal{T}_{Z}$, by Corollary 3.5 $Z=(X, Y)_{f}$ where $X\in \mathcal {T}_{X}$ and $Y\in \mathcal {T}_{Y}$. Then we have an exact sequence $0\rightarrow (X, 0)\rightarrow (X, Y)_{f}\rightarrow (0, Y)\rightarrow 0$, i.e., there is a sequence $0\subset (X, 0)\subset (X, Y)_{f}$. Since $X\in \mathcal{T}_{X}=T(\mathcal {S}_{X})=\opname{Filt}(\opname{Fac}(\mathcal {S}_{X}))$, there exists a sequence $0=X_{0}\subset X_{1}\subset \cdot \cdot \cdot \subset X_{m-1}\subset X_{m}=X$ with $X_{i}/X_{i-1}\in \add(\opname{Fac}(\mathcal {S}_{X}))$ for all $1\leq i\leq m$. Since $Y\in \mathcal{T}_{Y}=T(\mathcal {S}_{Y})=\opname{Filt}(\opname{Fac}(\mathcal {S}_{Y}))$, there exists a sequence $0=Y_{0}\subset Y_{1}\subset \cdot \cdot \cdot \subset Y_{n-1}\subset Y_{n}=Y$ with $Y_{j}/Y_{j-1}\in \add(\opname{Fac}(\mathcal {S}_{Y}))$ for all $1\leq j\leq n$. Then we have a sequence $0=(X_{0}, 0)\subset (X_{1}, 0)\subset \cdot \cdot \cdot \subset (X_{m-1}, 0)\subset (X_{m}, 0)=(X, 0)=(X, Y_{0})\subset (X, Y_{1})\subset \cdot \cdot \cdot \subset (X, Y_{n-1})\subset (X, Y_{n})=(X, Y)_{f}$, where $(X_{i}, 0)/(X_{i-1}, 0)\cong X_{i}/X_{i-1}\in \add(\opname{Fac}(\mathcal {S}_{X}))\subset \add(\opname{Fac}(\mathcal {S}_{X}\sqcup \mathcal {S}_{Y}))$ for all $1\leq i\leq m$, and $(X, Y_{j})/(X, Y_{j-1})\cong Y_{j}/Y_{j-1}\in \add(\opname{Fac}(\mathcal {S}_{Y}))\subset \add(\opname{Fac}(\mathcal {S}_{X}\sqcup \mathcal {S}_{Y}))$ for all $1\leq j\leq n$. Therefore $Z\in \opname{Filt}(\opname{Fac}(\mathcal {S}_{X}\sqcup \mathcal {S}_{Y}))\cong \opname{Filt}(\opname{Fac}(i_{0}(\mathcal {S}_{X})\sqcup j_{-1}(\mathcal {S}_{Y})))=T(\mathcal {S}_{Z})$.
\end{proof}

The natural question is if we can construct support $\tau$-tilting modules by gluing functorially finite torsion classes via ladders. More precisely,
\begin{question*}
Given a symmetric ladder of module categories of height 2, support $\tau$-tilting modules $X$ and $Y$ in $\mod B$ and $\mod C$, is it possible to construct a support $\tau$-tilting $A$-module corresponding to the glued functorially finite torsion class via the lower recollement?
\end{question*}
The following theorem answers this question positively.
\begin{theorem}
Let $\mathcal {R}$ be a symmetric ladder of $\mod A$ relative to $\mod B$ and $\mod C$ of height $2$.
If $X$ and $Y$ are respectively support $\tau$-tilting modules in $\opname{mod}B$ and $\opname{mod}C$, with the corresponding functorially finite torsion classes  $\mathcal{T}_{X}$ and $\mathcal{T}_{Y}$, then the induced functorially finite torsion class $\mathcal{T}_{Z}=\{Z\in \mod A|j_{0}(Z)\in\mathcal {T}_{Y}, i_{-1}(Z)\in\mathcal {T}_{X}\}$ in $\mod A$ is associated with the support $\tau$-tilting module $Z=i_{0}(X)\oplus K_{Y}$, where $K_{Y}$ is defined by the minimal left $\mathcal{T}_{Z}$-approximation $f: j_{1}(Y)\longrightarrow K_{Y}$.
\end{theorem}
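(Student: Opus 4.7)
The strategy is to identify $Z = i_0 X \oplus K_Y$ with the basic support $\tau$-tilting $A$-module $P(\mathcal{T}_Z)$ attached to $\mathcal{T}_Z$ under the Adachi-Iyama-Reiten bijection of Theorem~2.2, by verifying three properties: $Z\in\mathcal{T}_Z$, $\opname{Fac}Z=\mathcal{T}_Z$, and $Z$ is basic $\tau$-rigid. Membership of $Z$ in $\mathcal{T}_Z$ is direct: the recollement identities $j_0 i_0=0$ and $i_{-1}i_0\cong\id$ give $j_0(i_0X)=0\in\mathcal{T}_Y$ and $i_{-1}(i_0X)\cong X\in\mathcal{T}_X$, while $K_Y\in\mathcal{T}_Z$ holds by the very definition of $f\colon j_1 Y\to K_Y$.

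The covering inclusion $\mathcal{T}_Z\subseteq\opname{Fac}Z$ is the main content. Fix $T\in\mathcal{T}_Z$. A height-$2$ symmetric ladder provides the two canonical short exact sequences
\[
0\to i_0 i_{-1} T\to T\to j_{-1}j_0 T\to 0,\qquad 0\to j_1 j_0 T\to T\to i_0 i_1 T\to 0,
\]
coming from the counits of the relevant adjunctions and valid thanks to the extra exactness of $i^!=i_{-1}$ and $i^*=i_1$ in height $2$ (cf.\ the use of [FZ, Lemma 3.1(4)] in the proof of Proposition~3.1). Using $i_{-1}T\in\opname{Fac}X$, pick a surjection $X^a\twoheadrightarrow i_{-1}T$ in $\mod B$; the adjunction $i_0\dashv i_{-1}$ turns it into a map $(i_0X)^a\to T$ whose image is the subobject $i_0 i_{-1} T$ of $T$. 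Using $j_0T\in\opname{Fac}Y$, pick $Y^b\twoheadrightarrow j_0 T$; applying the exact functor $j_1=j_!$ (again, a consequence of height $2$) yields $j_1 Y^b\twoheadrightarrow j_1 j_0 T\hookrightarrow T$. Since the target lies in $\mathcal{T}_Z$, the approximation property of $f$ factors this composite through $K_Y^b$, giving a map $K_Y^b\to T$ whose image contains $j_1 j_0 T$. The combined map $(i_0 X)^a\oplus K_Y^b\to T$ is therefore surjective, since its image contains both subobjects $i_0 i_{-1} T$ and $j_1 j_0 T$, which already sum to all of $T$ by comparing the two short exact sequences.

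For $\tau$-rigidity I compute $\Ext^1_A(Z,\mathcal{T}_Z)$ summand-wise. Exactness of $i_{-1}=i^!$ yields the derived-adjunction identity $\Ext^1_A(i_0 X,T)\cong\Ext^1_B(X,i_{-1}T)$, vanishing for $T\in\mathcal{T}_Z$ because $X$ is Ext-projective in $\mathcal{T}_X$ and $i_{-1}T\in\mathcal{T}_X$. Similarly, exactness of $j_1=j_!$ gives $\Ext^1_A(j_1 Y,T)\cong\Ext^1_C(Y,j_0 T)=0$, so $j_1 Y$ is ``$\mathcal{T}_Z$-projective''. I then invoke the standard fact that the minimal left $\mathcal{T}$-approximation of a $\mathcal{T}$-projective module is itself Ext-projective in $\mathcal{T}$: for any short exact sequence $0\to T'\to E\to K_Y\to 0$ with $T'\in\mathcal{T}_Z$ (forcing $E\in\mathcal{T}_Z$), the lift $\widetilde f\colon j_1 Y\to E$ of $f$ (available from $\Ext^1_A(j_1 Y,T')=0$) factors as $\widetilde f=s\circ f$ by the approximation property, and left-minimality of $f$ forces the induced endomorphism of $K_Y$ to be an automorphism, splitting the sequence. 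Thus $\Ext^1_A(K_Y,\mathcal{T}_Z)=0$ and $Z$ is $\tau$-rigid.

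Basicness of $Z$ follows from basicness of $X$, $Y$ together with the absence of common indecomposable summands of $i_0 X$ and $K_Y$: any summand of $i_0 X$ is annihilated by $j_0$, whereas the left-minimality of $f$ rules out such an ``$i_0$-type'' summand in $K_Y$. A basic $\tau$-rigid module $Z\in\mathcal{T}_Z$ with $\opname{Fac}Z=\mathcal{T}_Z$ is then forced to coincide with $P(\mathcal{T}_Z)$ under the bijection of Theorem~2.2, since any additional indecomposable Ext-projective $E\in\mathcal{T}_Z$ would appear as a quotient of some $Z^n$ via the covering step and split off by its own Ext-projectivity, hence already lie in $\add(Z)$. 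The most delicate point I foresee is the Ext-projectivity argument for $K_Y$, where the minimality of $f$ has to be combined with the height-$2$ ladder-exactness of $j_1$; neither ingredient is available from a bare recollement, and by [GKP, 2.3] the restriction to symmetric ladders of height $2$ is precisely what forces $A$ to be triangular matrix, providing the concrete structural model in which all of the above can be verified.
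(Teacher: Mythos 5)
Your overall strategy coincides with the paper's: show $Z\in\mathcal{T}_Z$, show $Z$ is Ext-projective in $\mathcal{T}_Z$, show $\mathcal{T}_Z\subseteq\opname{Fac}Z$, and conclude via the Adachi--Iyama--Reiten bijection. The covering argument via the canonical subobject $i_0i_{-1}T$ and an approximation of $j_1(\text{something in }\add Y)$, and the Ext-projectivity of $K_Y$ via left minimality of $f$, are exactly the paper's two steps (your splitting argument for $K_Y$ is a cleaner packaging of the paper's projective-cover/pushout computation).

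There is, however, one step that fails as stated: the exactness of $j_1=j_!$ is \emph{not} a consequence of the ladder having height $2$. In the triangular matrix model $A=\left(\begin{smallmatrix} B & 0\\ M & C\end{smallmatrix}\right)$ one has $j_1=-\otimes_C e_2A$ with $e_2A\cong {}_CM\oplus {}_CC$ as a left $C$-module, so $j_1$ is exact only when ${}_CM$ is flat; the height-$2$ ladder exists regardless. Consequently your second sequence $0\to j_1j_0T\to T\to i_0i_1T\to 0$ is not exact on the left: for $T=(R,S)_g$ the counit $j_1j_0T\to T$ is $(g,\mathrm{id}_S)\colon (S\otimes_CM,S)\to (R,S)$, which is not monic unless $g$ is. This infects two places. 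First, your ``derived-adjunction identity'' $\Ext^1_A(j_1Y,T)\cong\Ext^1_C(Y,j_0T)$ is unjustified; what is true (and suffices) is only the injection $\Ext^1_A(j_1Y,T)\hookrightarrow\Ext^1_C(Y,j_0T)$, which follows from exactness of $j_0$ alone (apply $j_0$ to an extension of $j_1Y$ by $T$ and adjoint back a splitting), and this is essentially how the paper argues, via an injective copresentation of $T$ and a pullback. Second, your claim that the two subobjects ``sum to all of $T$'' must be rephrased: $j_1j_0T$ is not a subobject, but the right-exact part $j_1j_0T\to T\to i_0i_1T\to 0$ (always valid) together with the genuine exact sequence $0\to i_0i_{-1}T\to T\to j_{-1}j_0T\to 0$ and the epimorphism $j_1j_0T\twoheadrightarrow j_{-1}j_0T$ still give $\im(j_1j_0T\to T)+i_0i_{-1}T=T$, so your surjectivity conclusion survives. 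Finally, your closing argument that any further indecomposable Ext-projective $E$ of $\mathcal{T}_Z$ ``splits off'' from a surjection $Z^n\twoheadrightarrow E$ is too quick, since the kernel of that surjection need not lie in $\mathcal{T}_Z$; the identification $\add Z=\add P(\mathcal{T}_Z)$ should instead be quoted from the standard $\tau$-tilting fact that an Ext-projective generator of a functorially finite torsion class is its associated support $\tau$-tilting module (the paper is equally terse here, reducing everything to $\opname{Fac}Z=\mathcal{T}_Z$ via Theorem~2.2).
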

\begin{proof}
By Theorems 2.2 and 3.3, we only have to prove that $Z$ is Ext-projective in $\mathcal{T}_{Z}$ and $\mathcal{T}_{Z}=\opname{Fac}Z$.

(i) First, we show that $Z$ is Ext-projective in $\mathcal{T}_{Z}$, i.e., $\Ext_{A}^{1}(Z, \mathcal{T}_{Z})=0$. It is easy to see that $i_{0}(X)\in \mathcal{T}_{Z}$ and therefore $Z\in \mathcal{T}_{Z}$. It follows that $\opname{Fac}Z\subset \mathcal{T}_{Z}$ since $\mathcal{T}_{Z}$ is a torsion class.

Note that $i_{0}$ and $i_{-1}$ are exact adjoint functors. Then we have $\Ext_{A}^{1}(i_{0}(X), \mathcal{T}_{Z})\cong\Ext_{B}^{1}(X, i_{-1}(\mathcal{T}_{Z}))=0$ since $i_{-1}(\mathcal{T}_{Z})\subset \mathcal{T}_{X}$ and $\mathcal {P}(\mathcal{T}_{X})=X$. Now we aim to prove that $\Ext_{A}^{1}(K_{Y}, \mathcal{T}_{Z})=0$. We consider the epimorphism $\pi :P\longrightarrow K_{Y}$, where $P$ is the projective cover of $K_{Y}$. Then we have a short exact sequence
\begin{equation}
0\longrightarrow K\longrightarrow j_{1}(Y)\oplus P\longrightarrow K_{Y}\longrightarrow 0.
\end{equation}
Apply the functor $\Hom_{A}(-, \mathcal {T}_{Z})$ to (4.1), we have a long exact sequence $(\divideontimes )$
$$\Hom_{A}(j_{1}(Y)\oplus P, \mathcal {T}_{Z})\longrightarrow \Hom_{A}(K, \mathcal {T}_{Z})\longrightarrow \Ext_{A}^{1}(K_{Y}, \mathcal {T}_{Z})\longrightarrow \Ext_{A}^{1}(j_{1}(Y)\oplus P, \mathcal {T}_{Z}).$$

Step1: we claim that $\Ext_{A}^{1}(j_{1}(Y)\oplus P, \mathcal{T}_{Z})=\Ext_{A}^{1}(j_{1}(Y), \mathcal{T}_{Z})=0$. We only have to prove $\Ext_{A}^{1}(j_{1}(Y), T)=0$ for any $T\in \mathcal {T}_{Z}$.

Consider the exact sequence
\begin{equation}
0\rightarrow T\buildrel {p} \over\rightarrow I\buildrel {q } \over\rightarrow L\rightarrow 0
\end{equation}
where $I$ is the injective envelope of $T$. Apply the functor $\Hom_{A}(j_{1}(Y), -)$ to (4.2), we have a long exact sequence
$$\xymatrix{
& \Hom_{A}(j_{1}(Y), I)\ar[r]^{q_{*}}\ar[d]^{\cong} & \Hom_{A}(j_{1}(Y), L)\ar[r]\ar[d]^{\cong} & \Ext_{A}^{1}(j_{1}(Y), T)\ar[r] & 0&\\
& \Hom_{C}(Y, j_{0}(I))\ar[r]^{(j_{0}q)_{*}} &\Hom_{C}(Y, j_{0}(L)). &   & &}$$
Since $(j_{1}, j_{0})$ is an adjoint pair, we only have to prove $(j_{0}q)_{*}$ is epic. Since $j_{0}$ is exact, apply $j_{0}$ to (4.2) we have the following exact sequence
\begin{equation}
0\rightarrow j_{0}(T)\buildrel {j_{0}p} \over\rightarrow j_{0}(I)\buildrel {j_{0}q } \over\rightarrow j_{0}(L)\rightarrow 0.
\end{equation}
Apply the functor $\Hom_{C}(Y, -)$ to (4.3), we have a long exact sequence
$$\Hom_{C}(Y, j_{0}(I))\buildrel {(j_{0}q)_{*}}  \over\longrightarrow  \Hom_{C}(Y, j_{0}(L))\longrightarrow \Ext_{C}^{1}(Y, j_{0}(T)).
$$
Since $T\in \mathcal {T}_{Z}$, $j_{0}(T)\in \mathcal {T}_{Y}$. It follows that $\Ext_{C}^{1}(Y, j_{0}(T))=0$ since $\mathcal {P}(\mathcal{T}_{Y})=Y$. Thus $(j_{0}q)_{*}$ is epic.

Step2: we claim that $\Hom_{A}(j_{1}(Y)\oplus P, \mathcal {T}_{Z})\longrightarrow \Hom_{A}(K, \mathcal {T}_{Z})$ is surjective. For any $T\in \mathcal {T}_{Z}$ and a map $g: K\longrightarrow T$, we consider the pushout diagram:
$$\xymatrix{
& 0\ar[r] & K\ar[r]^{i}\ar[d]_{g} & j_{1}(Y)\oplus P\ar[r]^{(f, \pi)}\ar[d]^{(h, h')}\ar@{.>}[dl]_{t'} & K_{Y}\ar[r]\ar@{=}[d]\ar@{.>}[dl]^{tl} & 0&\\
& 0\ar[r] &T\ar[r]_{r} & C\ar[r]_{s} & K_{Y}\ar[r] & 0.&}$$
Since $T$ and $K_{Y}$ are both in $\mathcal {T}_{Z}$ and $\mathcal {T}_{Z}$ is a torsion class, we have $C\in \mathcal {T}_{Z}$. Observe that $h: j_{1}(Y)\longrightarrow C$, there exists $t: K_{Y}\longrightarrow C$ such that $h=tf$ since $f$ is the left $\mathcal{T}_{Z}$-approximation of $j_{1}(Y)$. By the commutativity of the above diagram, we have $f=sh=stf$. Then $st$ is an automorphism since $f$ is left minimal (see [ASS, IV.Definition 1.1]) and there exists $l\in\opname{End}_{A}(K_{Y})$ such that $stl=id_{K_{Y}}$. By diagram chasing there exists $t': j_{1}(Y)\oplus P\longrightarrow T$ such that $g=t'i$. Observe the long exact sequence $(\divideontimes )$, we have finished to prove that $\Ext_{A}^{1}(K_{Y}, \mathcal{T}_{Z})=0$.

(ii) Finally, it is enough to show that $\mathcal {T}_{Z}\subset \opname{Fac}Z$. For any $M\in \mathcal {T}_{Z}$, since $i_{-1}$ is exact, by [FZ, Lemma 3.1(4)] we have a short exact sequence $0\rightarrow i_{0}i_{-1}(M)\buildrel {\sigma _{M}} \over\rightarrow M\buildrel {\varepsilon_{M}} \over\rightarrow j_{-1}j_{0}(M)\rightarrow 0$. It is easy to check that $i_{0}i_{-1}(M)$ and $j_{-1}j_{0}(M)$ are both in $\mathcal {T}_{Z}$. Since $M\in \mathcal {T}_{Z}$, we have $i_{-1}(M)\in \mathcal {T}_{X}=\opname{Fac}X$ and $j_{0}(M)\in \mathcal {T}_{Y}=\opname{Fac}Y$. Then there exist $X_{M}\in \opname{add}X$ and $Y_{M}\in \opname{add}Y$ such that $\pi_{1}: X_{M}\longrightarrow i_{-1}(M)$ and $\pi_{2}: Y_{M}\longrightarrow j_{0}(M)$ are epic. Since $i_{0}$ is exact, there is an epimorphism $h_{1}: i_{0}(X_{M})\longrightarrow i_{0}i_{-1}(M)$. Since $j_{-1}$ is exact and $j_{1}\longrightarrow j_{-1}\longrightarrow 0$, there is an epimorphism $h_{2}: j_{1}(Y_{M})\longrightarrow j_{-1}j_{0}(M)$.

Let $f': j_{1}(Y_{M})\longrightarrow \widetilde{Y_{M}}$ be the minimal left $\mathcal {T}_{Z}$-approximation of $j_{1}(Y_{M})$, then we have $\widetilde{Y_{M}}\in \opname{add}(K_{Y})$ since $Y_{M}\in \opname{add}Y$. Note that $h_{2}: j_{1}(Y_{M})\longrightarrow j_{-1}j_{0}(M)$ is epic and $j_{-1}j_{0}(M)\in \mathcal {T}_{Z}$, there exists an epimorphism $h_{3}: \widetilde{Y_{M}}\longrightarrow j_{-1}j_{0}(M)$ such that $h_{2}=h_{3}f'$. Consider the following diagram:
$$\xymatrix{
& 0\ar[r] & i_{0}(X_{M})\ar[r]^{i'}\ar[d]_{h_{1}} & i_{0}(X_{M})\oplus\widetilde{Y_{M}} \ar[r]^{\pi'}\ar[d]_{(\sigma _{M}h_{1}, m)} & \widetilde{Y_{M}}\ar[r]\ar[d]^{h_{3}}\ar@{.>}[dl]^{m} & 0&\\
& 0\ar[r] &i_{0}i_{-1}(M)\ar[r]_{\sigma _{M}} & M\ar[r]_{\varepsilon_{M}} & j_{-1}j_{0}(M)\ar[r] & 0.&}$$
Observe that $\widetilde{Y_{M}}$ is Ext-projective in $\mathcal {T}_{Z}$ since $\widetilde{Y_{M}}\in \opname{add}(K_{Y})$ and $K_{Y}$ is Ext-projective in $\mathcal {T}_{Z}$. Then there exists $m: \widetilde{Y_{M}}\longrightarrow M$ such that $h_{3}=\varepsilon_{M}\cdot m$ since $i_{0}i_{-1}(M)\in\mathcal {T}_{Z}$. Now we define $h'=(\sigma _{M}h_{1}, m): i_{0}(X_{M})\oplus\widetilde{Y_{M}}\longrightarrow M$. It is easy to check that the above diagram is commutative. By Snake Lemma, we know that $h'$ is epic since $h_{1}$ and $h_{3}$ are both epic. It follows that $M\in \opname{Fac}Z$ since $i_{0}(X_{M})\oplus\widetilde{Y_{M}}\in\opname{add}Z$.
\end{proof}
The following remark states what we will mean by \emph{gluing support $\tau$-tilting modules via symmetric ladders of height 2}.
\begin{remark}
Let $\mathcal {R}$ be a symmetric ladder of $\mod A$ relative to $\mod B$ and $\mod C$ of height $2$. We say that a support $\tau$-tilting module $Z\in\mod A$ is glued from support $\tau$-tilting modules $X\in\mod B$ and $Y\in\mod C$ with respect to $\mathcal {R}$ if $Z$ is obtained by the construction of Theorem 4.4, that is, $Z$ corresponds to the functorially finite torsion class glued from the functorially finite torsion classes associated to $X$ and $Y$ with respect to $\mathcal {R}$.
\end{remark}

We conclude this section with some examples illustrating our results. We show the process of gluing functorially finite torsion classes via the lower recollement (see Corollary 3.4 or Corollary 3.5). Then by Theorem 4.4 we show how to glue support $\tau$-tilting modules via the ladder. The process of gluing left finite semibrick via the upper recollement is easy by Proposition 4.1.

\begin{example}
Let $B$ be the path algebra of quiver $\cdot 1$ and $C$ the path algebra of quiver $\cdot 2$. Then the matrix algebra $A$ is the path algebra of quiver $1\longleftarrow 2$. They produce a symmetric ladder of height $2$ (see $(\mathcal {L})$ in Section 3).

In the following, we illustrate the construction of support $\tau$-tilting $A$-modules in Theorem 4.4.

\begin{table}[htbp]
\begin{minipage}{0.48\linewidth}
\centering
  \caption{\label{tab:test}}
 \begin{tabular}{lcl}
  \toprule
\quad\quad $\opname{f-tors\,}B  $\quad\quad & $\opname{f-tors\,}A$ \quad\quad & $\opname{f-tors\,}C$ \\
  \midrule
  \quad\quad $\mod B$ &
  $\mod A$ & \quad
  $\mod C$ \\\\
  \quad\quad $\mod B$&
  $\opname{add}(1)$\quad &
  \quad\quad
  $0$\\\\
  \quad\quad\quad $0$ &
  $\opname{add}(2)$
  \quad & \quad
  $\mod C$ \\\\
  \quad\quad\quad $0$ &
  $0$
  \quad & \quad\quad
  $0$ \\\\
   \bottomrule
 \end{tabular}
 \end{minipage}\begin{minipage}{0.48\linewidth}
 \centering
\caption{\label{tab:test}}
 \begin{tabular}{lcl}
  \toprule
$\opname{s\tau-tilt\,}B  $\quad\quad & $\opname{s\tau-tilt\,}A$ \quad\quad & $\opname{s\tau-tilt\,}C$ \\
  \midrule
  \quad
  $\color{red}{1}$ &
  $\begin{smallmatrix}\color{red}{1}\end{smallmatrix}\oplus
  \begin{smallmatrix}\color{red}{2}\\1\end{smallmatrix}$ & \quad\quad
  $\color{red}{2}$ \\\\
  \quad $\color{red}{1}$&
  $\color{red}{1}$\quad &
  \quad\quad
  $0$\\\\
  \quad $0$ &
  $\color{red}{2}$
  \quad & \quad\quad
  $\color{red}{2}$ \\\\
   \quad $0$ &
  $0$
  \quad & \quad\quad
  $0$ \\\\
   \bottomrule
\end{tabular}
\end{minipage}
\end{table}
In table 1, we list gluing of functorially finite torsion classes. In table 2, we list the construction of support $\tau$-tilting modules by gluing functorially finite torsion classes via the ladder, and the bricks in the corresponding left finite semibrick for each support $\tau$-tilting are read in color. Here we obtain 4 support $\tau$-tilting $A$-modules and the left one $\begin{smallmatrix}2\end{smallmatrix}\oplus
  \begin{smallmatrix}2\\1\end{smallmatrix}$ can not be obtained by this way.

Note that $j_{1}=?\ten_C e_2A$ preserves projective modules, $j_{1}(2)=\begin{smallmatrix}2\\1\end{smallmatrix}$. Observe the third rows in the both tables, $\begin{smallmatrix}2\\1\end{smallmatrix}$ is not in $\opname{add}(2)$, so we take the left $\opname{add}(2)$-approximation $\begin{smallmatrix}2\\1\end{smallmatrix}\longrightarrow 2$.
\end{example}

\begin{example}
Let $B$ be the path algebra of quiver $1\longrightarrow 2$ and $C$ the path algebra of quiver $\cdot 3$. Then the matrix algebra $A$ is the path algebra of quiver $3\longrightarrow 1\longrightarrow 2$. They produce a symmetric ladder of height $2$ (see $(\mathcal {L})$ in Section 3).

In the following, we illustrate the construction of support $\tau$-tilting $A$-modules in Theorem 4.4.
\begin{table}[htbp]
\begin{minipage}{0.48\linewidth}
\centering
  \caption{\label{tab:test}}
\resizebox{\textwidth}{55mm}{
 \begin{tabular}{lcl}
  \toprule
\quad\quad $\opname{f-tors\,}B  $\quad & $\opname{f-tors\,}A$ \quad\quad & $\opname{f-tors\,}C$ \\
  \midrule
  \quad\quad $\mod B$ &
  $\mod A$ &
  $\mod C$ \\\\
  \quad $\mod B/\opname{add}(2)$&
  $\mod A/\opname{add}(2)$\quad &
  $\mod C$\\\\
  \quad\quad $\opname{add}(1)$ &
  $\opname{add}(1\oplus {\begin{smallmatrix}3\\1\end{smallmatrix}}\oplus 3)$
  \quad &
  $\mod C$ \\\\
 \quad\quad $\opname{add}(2)$ &
  $\opname{add}(2\oplus 3)$
  \quad &
  $\mod C$ \\\\
  \quad\quad\quad $0$ &
  $\opname{add}(3)$
  \quad &
  $\mod C$ \\\\
  \quad\quad $\mod B$ &
  $\opname{add}(2\oplus {\begin{smallmatrix}1\\2\end{smallmatrix}}\oplus 1)$ & \quad
  $0$ \\\\
  \quad $\opname{add}({\begin{smallmatrix}1\\2\end{smallmatrix}}\oplus 1)$&
  $\opname{add}({\begin{smallmatrix}1\\2\end{smallmatrix}}\oplus 1)$\quad &
  \quad
  $0$\\\\
  \quad\quad $\opname{add}(1)$ &
  $\opname{add}(1)$
  \quad & \quad
  $0$ \\\\
 \quad\quad $\opname{add}(2)$ &
  $\opname{add}(2)$
  \quad & \quad
  $0$ \\\\
  \quad\quad\quad $0$ &
  $0$
  \quad & \quad
  $0$ \\\\
   \bottomrule
 \end{tabular}}
 \end{minipage}\begin{minipage}{0.44\linewidth}
\centering
\caption{\label{tab:test}}
\resizebox{\textwidth}{55mm}{
 \begin{tabular}{lcl}
  \toprule
$\opname{s\tau-tilt\,}B  $\quad\quad & $\opname{s\tau-tilt\,}A$ \quad\quad & $\opname{s\tau-tilt\,}C$ \\
  \midrule
  \quad $\begin{smallmatrix}\color{red}{1}\\2\end{smallmatrix}\oplus
  \begin{smallmatrix}\color{red}{2}\end{smallmatrix}$ &
  $\begin{smallmatrix}\color{red}{1}\\2\end{smallmatrix}\oplus
  \begin{smallmatrix}\color{red}{2}\end{smallmatrix}\oplus\begin{smallmatrix}\color{red}{3}\\1\\2\end{smallmatrix}$ & \quad\quad
  $\color{red}{3}$ \\\\
  \quad $\begin{smallmatrix}\color{red}{1}\\ \color{red}{2}\end{smallmatrix}\oplus
  \begin{smallmatrix}1\end{smallmatrix}$&
  $\begin{smallmatrix}\color{red}{1}\\ \color{red}{2}\end{smallmatrix}\oplus
  \begin{smallmatrix}1\end{smallmatrix}\oplus\begin{smallmatrix}\color{red}{3}\\1\\2\end{smallmatrix}$\quad &
  \quad\quad
  $\color{red}{3}$\\\\
  \quad $\color{red}{1}$ &
  $\begin{smallmatrix}\color{red}{1}\end{smallmatrix}\oplus
  \begin{smallmatrix}\color{red}{3}\\1\end{smallmatrix}$
  \quad & \quad\quad
  $\color{red}{3}$ \\\\
 \quad $\color{red}{2}$ &
  ${\color{red}{2}}\oplus{\color{red}{3}}$
  \quad & \quad\quad
  $\color{red}{3}$ \\\\
  \quad $0$ &
  $\color{red}{3}$
  \quad & \quad\quad
  $\color{red}{3}$ \\\\
  \quad $\begin{smallmatrix}\color{red}{1}\\2\end{smallmatrix}\oplus
  \begin{smallmatrix}\color{red}{2}\end{smallmatrix}$ &
  $\begin{smallmatrix}\color{red}{1}\\2\end{smallmatrix}\oplus
  \begin{smallmatrix}\color{red}{2}\end{smallmatrix}$ & \quad\quad
  $0$ \\\\
  \quad $\begin{smallmatrix}\color{red}{1}\\ \color{red}{2}\end{smallmatrix}\oplus
  \begin{smallmatrix}1\end{smallmatrix}$&
  $\begin{smallmatrix}\color{red}{1}\\ \color{red}{2}\end{smallmatrix}\oplus
  \begin{smallmatrix}1\end{smallmatrix}$\quad &
  \quad\quad
  $0$\\\\
  \quad $\color{red}{1}$ &
  $\color{red}{1}$
  \quad & \quad\quad
  $0$ \\\\
 \quad $\color{red}{2}$ &
  $\color{red}{2}$
  \quad & \quad\quad
  $0$ \\\\
  \quad $0$ &
  $0$
  \quad & \quad\quad
  $0$ \\\\
   \bottomrule
\end{tabular}}
\end{minipage}
\end{table}

In table 3, we list gluing of functorially finite torsion classes. In table 4, we list the construction of support $\tau$-tilting modules by gluing functorially finite torsion classes via the ladder, and the bricks in the corresponding left finite semibrick for each support $\tau$-tilting are read in color.
Here we obtain 10 support $\tau$-tilting $A$-modules and the left four $\begin{smallmatrix}3\end{smallmatrix}\oplus\begin{smallmatrix}2\end{smallmatrix}\oplus
\begin{smallmatrix}3\\1\\2\end{smallmatrix}, \begin{smallmatrix}3\end{smallmatrix}\oplus\begin{smallmatrix}3\\1\end{smallmatrix}\oplus
\begin{smallmatrix}3\\1\\2\end{smallmatrix}, \begin{smallmatrix}3\\1\end{smallmatrix}\oplus\begin{smallmatrix}1\end{smallmatrix}\oplus
\begin{smallmatrix}3\\1\\2\end{smallmatrix}, \begin{smallmatrix}3\end{smallmatrix}\oplus\begin{smallmatrix}3\\1\end{smallmatrix}$
can not be obtained by this way.

Note $j_{1}=?\ten_C e_2A$ preserves projective modules, $j_{1}(3)=\begin{smallmatrix}3\\1\\2\end{smallmatrix}$. Observe the third rows in the both tables, $\begin{smallmatrix}3\\1\\2\end{smallmatrix}$ is not in $\opname{add}(1\oplus {\begin{smallmatrix}3\\1\end{smallmatrix}}\oplus 3)$, so we take the left $\opname{add}(1\oplus {\begin{smallmatrix}3\\1\end{smallmatrix}}\oplus 3)$-approximation $\begin{smallmatrix}3\\1\\2\end{smallmatrix}\longrightarrow \begin{smallmatrix}3\\1\end{smallmatrix}$. Observe the fourth rows in the both tables, $\begin{smallmatrix}3\\1\\2\end{smallmatrix}$ is not in $\opname{add}(2\oplus 3)$, so we take the left $\opname{add}(2\oplus 3)$-approximation $\begin{smallmatrix}3\\1\\2\end{smallmatrix}\longrightarrow 3$. Observe the fifth rows in the both tables, $\begin{smallmatrix}3\\1\\2\end{smallmatrix}$ is not in $\opname{add}(3)$, so we take the left $\opname{add}(3)$-approximation $\begin{smallmatrix}3\\1\\2\end{smallmatrix}\longrightarrow 3$.

\end{example}

\section{Triangular matrix algebras}
\medskip

Throughout this section, we concentrate on triangular matrix algebras. Let $B$ and $C$ be finite-dimensional $k$ algebras and $M$ a finitely generated $C$-$B$-bimodule. $A=\left(\begin{array}{cc} B & 0\\ {}_CM_B & C\end{array}\right)$ is the triangular matrix algebra. The construction of support $\tau$-tilting modules over triangular matrix algebra in [PMH, Theorem 4.2(1)] is exactly what we mean, but they did not have an explicit formula. In this section, the first main result is to give an explicit construction of support $\tau$-tilting modules over triangular matrix algebras and generalize the result of [GH, Theorem 4.3].
\begin{theorem}
If $X$ and $Y$ are respectively support $\tau$-tilting modules in $\opname{mod}B$ and $\opname{mod}C$, then $(X, 0)\oplus (X_{Y}, Y)_{f}$ is a support $\tau$-tilting $A$-module, where $X_{Y}$ and $f$ are defined by the minimal left $\opname{Fac}X$-approximation $f: Y\otimes _{C}M\longrightarrow X_{Y}$.
\end{theorem}
\begin{proof}
It is well known that there is a symmetric ladder of $\mod A$ relative to $\mod B$ and $\mod C$ of height $2$ as follows (see ($\mathcal {L}$) in Section 3)
\[
\xymatrix@C=5pc{\mod B
\ar@<+1ex>[r]|{inc} \ar@<-3ex>[r] &\mod A \ar@<1ex>[l]|{?\ten_A Ae_1}
\ar@<-3ex>[l]
 \ar@<+1ex>[r]|{?\ten_A Ae_2} \ar@<-3ex>[r] & \mod
C.\ar@<1ex>[l]|{inc} \ar@<-3ex>[l]|{?\ten_C e_2A}}
\]
In this case, $i_{0}(X)=(X,0)$ and $j_{1}(Y)=(Y\otimes _{C}M, Y)$. Since $Y\in \opname{Fac}Y$ and $f: Y\otimes _{C}M\longrightarrow X_{Y}$ is the minimal left $\opname{Fac}X$-approximation of $Y\otimes _{C}M$, one can check that $(f, id): (Y\otimes _{C}M, Y)\longrightarrow (X_{Y}, Y)_{f}$ is the minimal left $\mathcal {T}_{Z}(=\{(R, S)_{g}|R\in\opname{Fac}X, S\in\opname{Fac}Y\})$-approximation of $(Y\otimes _{C}M, Y)$. By Theorem 2.2, Corollary 3.5 and Theorem 4.4, $(X, 0)\oplus (X_{Y}, Y)_{f}$ is a support $\tau$-tilting $A$-module.
\end{proof}

Actually the construction in Theorem 5.1 can be restricted to $\tau$-tilting modules. Before doing this, the following lemma in [PMH, Lemma 4.1(1)] is useful.
\begin{lemma}
If $\mathcal {T}_{X}$ and $\mathcal {T}_{Y}$ are respectively sincere functorially finite in $\mod B$ and $\mod C$, then $\mathcal {T}_{Z}=\{(X, Y)_{f}|X\in \mathcal {T}_{X}, Y\in \mathcal {T}_{Y}\}$ is sincere functorially finite in $\mod A$.
\end{lemma}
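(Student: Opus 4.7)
The plan is to split the assertion into two independent tasks: functorial finiteness (including the torsion-class property) of $\mathcal{T}_Z$, and sincerity of $\mathcal{T}_Z$. For the first task, I would simply invoke Corollary 3.5: taking $\mathcal{T}_B := \mathcal{T}_X$ and $\mathcal{T}_C := \mathcal{T}_Y$ in the matrix-algebra setting, that corollary says precisely that $\mathcal{T}_Z = \{(X,Y)_f \mid X \in \mathcal{T}_X,\, Y \in \mathcal{T}_Y\}$ is a functorially finite torsion class in $\mod A$. So all closure axioms and approximations are already handled.

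What is left is sincerity, which I would verify directly on simples. In the triangular matrix algebra $A$, every simple right $A$-module is of one of two forms: $(S,0)$ with $S$ a simple $B$-module, or $(0,T)$ with $T$ a simple $C$-module (the structure map is forced to be zero in either case). Recall that a torsion class is sincere iff every simple module is a composition factor of some object of the class.

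For a simple of the form $(S,0)$, sincerity of $\mathcal{T}_X$ in $\mod B$ yields some $X_0 \in \mathcal{T}_X$ having $S$ as a composition factor; the $A$-module $(X_0,0)$ (with zero structure map) is clearly in $\mathcal{T}_Z$, and its composition series as an $A$-module is obtained componentwise from the $B$-composition series of $X_0$, so $(S,0)$ appears among its composition factors. For a simple of the form $(0,T)$, sincerity of $\mathcal{T}_Y$ in $\mod C$ yields some $Y_0 \in \mathcal{T}_Y$ with $T$ as a composition factor; since $0 \in \mathcal{T}_X$, the $A$-module $(0,Y_0)_{0}$ belongs to $\mathcal{T}_Z$, and again its composition series agrees with the $C$-composition series of $Y_0$ placed in the second component, so $(0,T)$ appears.

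Honestly I do not expect any real obstacle here: the nontrivial content (the torsion-class structure and functorial finiteness) has already been established in Corollary 3.5, and what remains is the essentially bookkeeping fact that the composition factors of a module $(X,0)$ or $(0,Y)$ over $A$ are determined componentwise. The only point worth being a little careful about is that $\mathcal{T}_Z$ is described \emph{only} in terms of the components $X \in \mathcal{T}_X$ and $Y \in \mathcal{T}_Y$ and allows any structure map $f$, which is what guarantees that the witnesses $(X_0,0)$ and $(0,Y_0)_{0}$ genuinely lie in $\mathcal{T}_Z$.
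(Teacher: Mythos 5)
Your proof is correct. Note that the paper itself does not prove this lemma at all --- it is quoted from [PMH, Lemma 4.1(1)] --- so there is no internal proof to compare against; what you have written is a legitimate self-contained derivation of the cited fact. The functorial-finiteness half is exactly the paper's own Corollary 3.5 (Smal\o's theorem), invoked correctly. The sincerity half is the right argument: the simple $A$-modules are precisely the $(S,0)$ and $(0,T)$ (since $(X,0)$ is always a submodule of $(X,Y)_f$, a simple must have one component zero), and a composition series of $(X_0,0)$ or $(0,Y_0)_0$ is computed componentwise, so the witnesses supplied by sincerity of $\mathcal{T}_X$ and $\mathcal{T}_Y$ transport to $\mathcal{T}_Z$; your observation that $\mathcal{T}_Z$ imposes no condition on the structure map $f$, so that $(X_0,0)$ and $(0,Y_0)_0$ genuinely belong to it, is indeed the one point that needed checking. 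If one takes ``sincere'' to mean ``contains a sincere module'' rather than the simple-by-simple formulation, one just sums the finitely many witnesses (or takes $(X_0,Y_0)_0$ with $X_0$, $Y_0$ sincere), which is harmless since $\mathcal{T}_Z$ is closed under finite direct sums.
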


\begin{corollary}
Let $X$ and $Y$ be $\tau$-tilting modules in $\opname{mod}B$ and $\opname{mod}C$. Then we have
\begin{enumerate}
\item $(X, 0)\oplus (X_{Y}, Y)_{f}$ is a $\tau$-tilting $A$-module, where $X_{Y}$ and $f$ are defined by the minimal left $\opname{Fac}X$-approximation $f: Y\otimes _{C}M\longrightarrow X_{Y}$;
\item If $Y=C$, $(X, 0)\oplus (U, C)_{f}$ is exactly the Bongartz completion of $(X, 0)$, where $U$ and $f$ are defined by the minimal left $\opname{Fac}X$-approximation $f: C\otimes _{C}M\longrightarrow U$.
\end{enumerate}
\end{corollary}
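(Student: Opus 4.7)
For part (1), the plan is to interpret the claim via Theorem 2.2 and reduce it to Lemma 5.2. Since $X$ and $Y$ are $\tau$-tilting, Theorem 2.2 gives that $\mathcal{T}_X = \opname{Fac}X$ and $\mathcal{T}_Y = \opname{Fac}Y$ are \emph{sincere} functorially finite torsion classes in their respective module categories. Lemma 5.2 then yields that the glued class
\[
\mathcal{T}_Z = \{(R, S)_g \in \mod A : R \in \opname{Fac}X,\ S \in \opname{Fac}Y\}
\]
is sincere functorially finite in $\mod A$. Since Theorem 5.1 identifies $\mathcal{T}_Z$ with $\opname{Fac}\bigl((X, 0) \oplus (X_Y, Y)_f\bigr)$, a second application of Theorem 2.2 (via the bijection $\opname{\tau-tilt\,}A \leftrightarrow \opname{sf-tors\,}A$) forces $(X, 0) \oplus (X_Y, Y)_f$ to be a $\tau$-tilting $A$-module.

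For part (2), specializing $Y = C$ in part (1) produces the $\tau$-tilting module $T := (X, 0) \oplus (U, C)_f$, whose associated torsion class is $\mathcal{T}_Z = \{(R, S)_g : R \in \opname{Fac}X\}$ since $\opname{Fac}C = \mod C$. To identify $T$ as the Bongartz completion of $(X, 0)$, my plan is to show $\mathcal{T}_Z = {}^{\perp}\tau_A(X, 0)$; by the Auslander--Reiten description in [AIR], the right-hand side is the maximal torsion class in $\mod A$ having $(X, 0)$ as an Ext-projective object, and it is precisely the torsion class associated to the Bongartz completion. One inclusion is automatic: part (1) shows $(X, 0)$ is Ext-projective in $\mathcal{T}_Z$, hence $\mathcal{T}_Z \subseteq {}^{\perp}\tau_A(X, 0)$.

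The substantive---and essentially the only---step is the explicit computation of $\tau_A(X, 0)$, which I expect to be the main obstacle. I apply the exact functor $i_0$ to a minimal projective presentation $B^{n_1} \to B^{n_0} \to X \to 0$ in $\mod B$, obtaining the projective presentation $(B^{n_1}, 0) \to (B^{n_0}, 0) \to (X, 0) \to 0$ in $\mod A$ (using $i_0(B) = e_1 A$). Dualizing with $\Hom_A(-, A)$ converts each $(e_1 A)^{n_i}$ into $(Ae_1)^{n_i} \cong (B^{n_i}, M^{n_i})$ as left $A$-modules, and applying $D = \Hom_k(-, k)$ together with tensor-hom duality yields
\[
\tau_A(X, 0) \cong \bigl(\tau_B X,\ \Hom_B(M, \tau_B X)\bigr)_{\mathrm{ev}},
\]
with structure map the evaluation pairing $\mathrm{ev} : \Hom_B(M, \tau_B X) \otimes_C M \to \tau_B X$.

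Granted this formula, unpacking the compatibility $\alpha \circ g = \mathrm{ev} \circ (\beta \otimes \id_M)$ for a morphism $(\alpha, \beta) : (R, S)_g \to \tau_A(X, 0)$ shows via tensor-hom adjunction that $\beta$ is the adjoint of $\alpha \circ g$ and is thus completely determined by $\alpha : R \to \tau_B X$. This gives the natural isomorphism $\Hom_A((R, S)_g, \tau_A(X, 0)) \cong \Hom_B(R, \tau_B X)$. Since $X$ is $\tau$-tilting in $\mod B$, one has $\opname{Fac}X = {}^{\perp}\tau_B X$ by [AIR], and therefore
\[
{}^{\perp}\tau_A(X, 0) = \{(R, S)_g : R \in {}^{\perp}\tau_B X\} = \{(R, S)_g : R \in \opname{Fac}X\} = \mathcal{T}_Z,
\]
identifying $T = (X, 0)\oplus (U, C)_f$ as the Bongartz completion of $(X, 0)$. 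Beyond the triangular-matrix bookkeeping in the $\tau_A$-computation, all remaining steps are formal adjunction arguments.
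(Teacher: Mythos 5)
Your proof is correct and follows essentially the same route as the paper: part (1) is verbatim the paper's argument (Theorem 5.1 $+$ Lemma 5.2 $+$ the bijection $\opname{\tau-tilt}A\leftrightarrow\opname{sf-\tors}A$ of Theorem 2.2), and part (2) likewise identifies $\mathcal{T}_Z$ with ${}^{\bot}(\tau_A(X,0))$ and invokes [AIR, Theorem 2.10]. The only difference is that the paper simply asserts $\mathcal{T}_Z={}^{\bot}(\tau_A(X,0))$ as a consequence of $\opname{Fac}X={}^{\bot}(\tau_B X)$, whereas you justify this step by explicitly computing $\tau_A(X,0)\cong(\tau_B X,\Hom_B(M,\tau_B X))_{\mathrm{ev}}$ from the minimal projective presentation and deducing $\Hom_A((R,S)_g,\tau_A(X,0))\cong\Hom_B(R,\tau_B X)$; this computation is correct and makes rigorous the step the paper leaves implicit.
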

\begin{proof}
(1) By Theorem 5.1, $(X, 0)\oplus (X_{Y}, Y)_{f}$ corresponds to the functorially finite torsion classes $\mathcal {T}_{Z}(=\{(R, S)_{g}|R\in\opname{Fac}X, S\in\opname{Fac}Y\})$. Since $X$ and $Y$ are $\tau$-tilting modules, by Theorem 2.2 $\opname{Fac}X$ and $\opname{Fac}Y$ are sincere functorially finite. By Lemma 5.2, we have $\mathcal {T}_{Z}$ is sincere functorially finite. Therefore $(X, 0)\oplus (X_{Y}, Y)_{f}$ is $\tau$-tilting by Theorem 2.2.

(2) By Theorem 5.1, $(X, 0)\oplus (U, C)_{f}$ corresponds to the functorially finite torsion classes $\mathcal {T}_{Z}(=\{(R, S)_{g}|R\in\opname{Fac}X, S\in\mod C\})$. Since $X$ is a $\tau$-tilting module, by [AIR, Theorem 2.12] we have $\opname{Fac}X=^{\bot }(\tau X)$. Thus $\mathcal {T}_{Z}=^{\bot }(\tau (X,0))$. It follows that $(X, 0)\oplus (U, C)_{f}$ is the Bongartz completion of $(X, 0)$ by definition of the Bongartz completion ([AIR, Theorem 2.10]).
\end{proof}

As a special case of Theorem 5.1, we obtain an independent proof of [GH, Theorem 4.3].
\begin{corollary}
If $X$ and $Y$ are respectively support $\tau$-tilting modules in $\opname{mod}B$ and $\opname{mod}C$ which satisfy $\Hom _{B}(Y\otimes _{C}M, \tau X)=0$ and $\Hom _{B}(eB, Y\otimes _{C}M)=0$, where $e$ is the maximal idempotent such that $\Hom_{B}(eB, X)=0$, then $(X, 0)\oplus (Y\otimes _{C}M, Y)$ is a support $\tau$-tilting $A$-module.
\end{corollary}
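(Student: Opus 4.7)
The plan is to deduce this corollary directly from Theorem 5.1 by showing that, under the two $\Hom$-vanishing hypotheses, the module $Y\otimes_C M$ already lies in $\opname{Fac} X$, so that the minimal left $\opname{Fac} X$-approximation $f: Y\otimes_C M \to X_Y$ appearing in Theorem 5.1 may be taken to be the identity. Once this identification is made, the conclusion of Theorem 5.1 specializes immediately to the desired statement, with $X_Y = Y\otimes_C M$.

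The key step is to invoke the standard characterization from [AIR, Proposition 2.9] of the functorially finite torsion class associated with a support $\tau$-tilting pair $(X, eB)$ in $\mod B$, namely
\[
\opname{Fac} X \;=\; \{\, N \in \mod B \mid \Hom_B(N, \tau X) = 0 \text{ and } \Hom_B(eB, N) = 0 \,\}.
\]
Since by assumption $e$ is the maximal idempotent with $\Hom_B(eB, X) = 0$, the pair $(X, eB)$ is precisely the support $\tau$-tilting pair determined by $X$, so this description of $\opname{Fac} X$ applies. The two hypotheses $\Hom_B(Y\otimes_C M, \tau X) = 0$ and $\Hom_B(eB, Y\otimes_C M) = 0$ then assert exactly that $Y\otimes_C M \in \opname{Fac} X$.

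Once $Y\otimes_C M$ itself belongs to $\opname{Fac} X$, the identity morphism $\mathrm{id}: Y\otimes_C M \to Y\otimes_C M$ is trivially a left $\opname{Fac} X$-approximation, and it is plainly minimal. Hence in the notation of Theorem 5.1 one may take $X_Y = Y\otimes_C M$ and $f = \mathrm{id}$. Substituting these values into Theorem 5.1 yields that
\[
(X, 0) \,\oplus\, (Y\otimes_C M,\, Y)_{\mathrm{id}} \;=\; (X, 0) \,\oplus\, (Y\otimes_C M,\, Y)
\]
is a support $\tau$-tilting $A$-module, which is the claim.

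There is no genuine obstacle in this argument; the whole point is that Corollary 5.4 is the degenerate case of Theorem 5.1 in which no nontrivial approximation is needed. The only care required is to check that the hypothesis on $e$ matches the idempotent in the support $\tau$-tilting pair attached to $X$, so that the characterization of $\opname{Fac} X$ in terms of $\tau X$ and $eB$ may legitimately be quoted; this follows from the uniqueness of the projective part of a support $\tau$-tilting pair [AIR].
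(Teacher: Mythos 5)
Your argument is correct and matches the paper's own proof: both reduce the corollary to Theorem 5.1 by using the AIR characterization $\opname{Fac}X = {}^{\bot}(\tau X)\cap (eB)^{\bot}$ to conclude that $Y\otimes_C M$ already lies in $\opname{Fac}X$, so the minimal left approximation is the identity. The only cosmetic difference is the precise reference within [AIR] (the paper cites Corollary 2.13 rather than Proposition 2.9).
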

\begin{proof}
From [AIR, Corollary 2.13] we have $Y\otimes _{C}M\in ^{\bot}(\tau X)\cap (eB)^{\bot }=\opname{Fac}X$. So we omit the approximation in Theorem 5.1 and get the result easily.
\end{proof}

The following result gives us a reduction at the level of functorially finite torsion classes over triangular matrix algebras.

\begin{theorem}
If $X$ is a $\tau$-tilting $B$-module, we have an order-preserving bijection
$$
\{\mathcal {T}\in \opname{f-\tors} A|\opname{Fac}(X, 0)\subseteq \mathcal {T}\subseteq ^{\bot}(\tau(X, 0))\}\longleftrightarrow \opname{f-\tors} C
$$
given by $F: \mathcal {T}=\{(R, S)_{g}|R\in \opname{mod}B, S\in \opname{mod}C\} \mapsto \mathcal {G}=\{S|(R, S)_{g}\in \mathcal {T}\}$ with inverse $G: \mathcal {G} \mapsto \mathcal {T}=\{(R, S)_{g}|R\in \opname{Fac}X, S\in \mathcal {G}\}$.
\end{theorem}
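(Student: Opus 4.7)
The plan is to reduce the bijection to two routine inputs plus one structural observation: a Bongartz-type description of the upper endpoint ${}^{\bot}(\tau(X,0))$, and the fact that every $\mathcal{T}$ in the stated interval automatically has product shape $\{(R,S)_g\mid R\in\opname{Fac}X,\ S\in F(\mathcal{T})\}$. I would begin by pinning down the upper endpoint: Corollary 5.3(2) shows that the Bongartz completion of $(X,0)$ is $(X,0)\oplus(U,C)_f$, whose associated torsion class equals ${}^{\bot}(\tau(X,0))$ as explained there, and which by Theorem 5.1 coincides with $\{(R,S)_g\mid R\in\opname{Fac}X,\ S\in\mod C\}$. This identification is the key auxiliary fact that controls the whole bijection.

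Next I would verify that $G$ is well-defined and maps into the interval. For $\mathcal{G}\in\opname{f-\tors}C$, Corollary 3.5 applied to the functorially finite torsion classes $\opname{Fac}X\subseteq\mod B$ and $\mathcal{G}\subseteq\mod C$ shows that $G(\mathcal{G})=\{(R,S)_g\mid R\in\opname{Fac}X,\ S\in\mathcal{G}\}$ is a functorially finite torsion class in $\mod A$. The lower bound $\opname{Fac}(X,0)\subseteq G(\mathcal{G})$ is immediate since $0\in\mathcal{G}$, and the upper bound $G(\mathcal{G})\subseteq{}^{\bot}(\tau(X,0))$ is then immediate from the Bongartz description. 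The identity $F\circ G=\mathrm{id}$ follows from the definitions, using that $X\in\opname{Fac}X$ guarantees every $S\in\mathcal{G}$ appears as the second coordinate of some triple in $G(\mathcal{G})$.

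The remaining and main step is the identity $\mathcal{T}=G(F(\mathcal{T}))$ for $\mathcal{T}$ in the interval. The inclusion $\mathcal{T}\subseteq G(F(\mathcal{T}))$ follows from the Bongartz description: any $(R,S)_g\in\mathcal{T}\subseteq{}^{\bot}(\tau(X,0))$ has $R\in\opname{Fac}X$, and $S\in F(\mathcal{T})$ by definition. The reverse inclusion, which I expect to be the main obstacle, exploits closure of $\mathcal{T}$ under quotients and extensions together with $\opname{Fac}(X,0)\subseteq\mathcal{T}$. Any $(R_0,S)_{g_0}\in\mathcal{T}$ admits the subobject $(R_0,0)\hookrightarrow(R_0,S)_{g_0}$ (compatibility of the bonding maps with the zero second coordinate is automatic) with quotient $(0,S)_0$, so $(0,S)_0\in\mathcal{T}$. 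Given any $R\in\opname{Fac}X$ one has $(R,0)\in\opname{Fac}(X,0)\subseteq\mathcal{T}$, so for an arbitrary bonding map $g'$ the triple $(R,S)_{g'}$ fits into the short exact sequence $0\to(R,0)\to(R,S)_{g'}\to(0,S)_0\to 0$ and therefore lies in $\mathcal{T}$. This yields $G(F(\mathcal{T}))\subseteq\mathcal{T}$, and in particular $F(\mathcal{T})\in\opname{f-\tors}C$ (closure of $F(\mathcal{T})$ under quotients and extensions can be read off by applying the exact functor $j_0=?\otimes_A Ae_2$ to $\mathcal{T}$, while functorial finiteness is forced by Corollary 3.5 together with the product identity). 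Order-preservation in both directions is tautological since both $F$ and $G$ are described by coordinate-wise conditions.
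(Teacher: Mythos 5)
Your proposal is correct and follows essentially the same route as the paper: identify the interval's upper endpoint ${}^{\bot}(\tau(X,0))$ with $\{(R,S)_g\mid R\in\opname{Fac}X,\ S\in\mod C\}$ using $\opname{Fac}X={}^{\bot}(\tau X)$, show every $\mathcal{T}$ in the interval has product shape, check that $F(\mathcal{T})$ is a functorially finite torsion class via Corollary 3.5 and coordinatewise exact sequences, and observe that $F$ and $G$ are mutually inverse and order-preserving. Your extension argument (quotienting to $(0,S)$ and extending by $(R,0)\in\opname{Fac}(X,0)$) in fact spells out the product-shape step that the paper passes over with ``It follows that,'' so if anything your write-up is slightly more complete at that point.
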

\begin{proof}
(1) We first prove that
$\{\mathcal {T}\in \opname{f-\tors} A|\opname{Fac}(X, 0)\subseteq \mathcal {T}\subseteq ^{\bot}(\tau(X, 0))\}\\=\{\mathcal {T}|\mathcal {T}=\{(R, S)_{g}|R\in\opname{Fac}X, S\in\mathcal {G}\in\opname{f-\tors} C\}\}.$

It is easy to check that $\opname{Fac}(X, 0)=\{(R, S)_{g}|R\in \opname{Fac}X, S=0\}$ and $^{\bot}(\tau(X, 0))=\{(R, S)_{g}|R\in ^{\bot}(\tau X), S\in \opname{mod}C\}$. Since $X$ is $\tau$-tilting, we have $\opname{Fac}X=^{\bot}(\tau X)$. It follows that if $\opname{Fac}(X, 0)\subseteq \mathcal {T}\subseteq ^{\bot}(\tau(X, 0))$, then $\mathcal {T}=\{(R, S)_{g}|R\in \opname{Fac}X, S\in \mathcal {G}\subseteq \opname{mod}C\}$. We claim that if $\mathcal {T}\in \opname{f-\tors} A$, $\mathcal {G}\in \opname{f-\tors} C$.

(i) Since $\mathcal {T}$ is functorially finite, by Corollary 3.5, it is easy to see that $\mathcal {G}$ is functorially finite.

(ii) Now we check $\mathcal {G}\in \opname{\tors} C$. Let $0\longrightarrow P\longrightarrow Q\longrightarrow R\longrightarrow 0$ be an exact sequence in $\opname{mod}C$. Considering the exact sequence $0\longrightarrow 0\longrightarrow X\longrightarrow X\longrightarrow 0$ in $\opname{mod}B$, then we have an exact sequence $$0\longrightarrow (0, P) \longrightarrow(X, Q) \longrightarrow(X, R) \longrightarrow 0$$ in $\opname{mod}A$. If $Q\in \mathcal {G}$, $(X, Q)\in \mathcal {T}$. It follows that $(X, R)\in \mathcal {T}$ since $\mathcal {T}$ is a torsion class. Thus $R\in \mathcal {G}$. If $P, R\in \mathcal {G}$, $(0, P), (X, R)\in \mathcal {T}$. It follows that $(X, Q)\in \mathcal {T}$ since $\mathcal {T}$ is a torsion class. Thus $Q\in \mathcal {G}$.

Therefore $\{\mathcal {T}\in \opname{f-\tors} A|\opname{Fac}(X, 0)\subseteq \mathcal {T}\subseteq ^{\bot}(\tau(X, 0))\}\subseteq \{\mathcal {T}|\mathcal {T}= \{(R, S)_{g}|R\in\opname{Fac}X, S\in\mathcal {G}\in\opname{f-\tors} C\}\}.$ The opposite inclusion is clear by the gluing of functorially finite torsion classes.

It is an easy observation that the maps $F$ and $G$ are well defined and they are inverse of each other.

(2) Note that if $\mathcal {G}_{1}\subseteq \mathcal {G}_{2}$, $\mathcal {T}_{1}=G(\mathcal {G}_{1})\subseteq G(\mathcal {G}_{2})=\mathcal {T}_{2}$. Therefore the bijections are isomorphisms of partially ordered sets.
\end{proof}

Applying the above result, we are ready to give the second main result of this section. We study the subset of $\opname{s\tau-tilt\,}A$ given by $\opname{s\tau-tilt\,}_{(X, 0)}A:=\{(R, S)_{f}\in\opname{s\tau-tilt\,}A|(X, 0)\in\opname{add}(R, S)_{f}\}$.
\begin{theorem}
If $X$ is a $\tau$-tilting $B$-module, we have an order-preserving bijection
$$\mathrm{red}: \opname{s\tau-tilt\,}_{(X, 0)}A
\longleftrightarrow \opname{s\tau-tilt\,}C
$$
given by $Z=(R, S)_{f} \mapsto S$ with inverse $Y \mapsto (X, 0)\oplus (X_{Y}, Y)_{f}$ where $X_{Y}$ and $f$ are defined by the minimal left $\opname{Fac}X$-approximation $f: Y\otimes _{C}M\longrightarrow X_{Y}$. In particular, $\opname{s\tau-tilt\,}C$ can be embedded as an interval in $\opname{s\tau-tilt\,}A$.
\end{theorem}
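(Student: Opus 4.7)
My plan is to realize this bijection as a composition of three order-preserving bijections that are essentially already in hand. First, by the AIR bijection of Theorem 2.2 together with the standard interval description from $\tau$-tilting theory (which, for a fixed $\tau$-rigid module, identifies the support $\tau$-tilting modules containing it as a summand with a specific interval of functorially finite torsion classes), sending $Z\mapsto\opname{Fac}(Z)$ should give an order-preserving bijection
$$
\opname{s\tau-tilt\,}_{(X,0)}A \;\longleftrightarrow\; \{\,\mathcal{T}\in\opname{f-\tors} A : \opname{Fac}(X,0)\subseteq \mathcal{T}\subseteq {}^{\bot}(\tau(X,0))\,\}.
$$
To justify the two inclusions, I would note that $(X,0)\in\opname{add}(Z)$ immediately yields $\opname{Fac}(X,0)\subseteq\opname{Fac}(Z)$, while $\opname{Fac}(Z)\subseteq {}^{\bot}(\tau Z)\subseteq {}^{\bot}(\tau(X,0))$ follows from $\tau$-rigidity combined with $(X,0)$ being a direct summand of $Z$. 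Next, Theorem 5.5 supplies an order-preserving bijection between this interval and $\opname{f-\tors} C$, and finally AIR applied to $C$ provides $\opname{f-\tors} C \leftrightarrow \opname{s\tau-tilt\,} C$. Composing the three stages yields an order-preserving map $Z\mapsto P(\mathcal{G}_Z)$, where $\mathcal{G}_Z=\{S\in\opname{mod} C : (R',S)_g\in\opname{Fac}(Z) \text{ for some } R'\}$.

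Second, I would match this composition with the explicit formulas in the statement. For the inverse direction, given $Y\in\opname{s\tau-tilt\,} C$, Theorem 5.1 has already shown that $(X,0)\oplus(X_Y,Y)_f$ is support $\tau$-tilting, and its $\opname{Fac}$ equals $\{(R',S)_g : R'\in\opname{Fac} X, S\in\opname{Fac} Y\}$, which traces back through Theorem 5.5 to $\opname{Fac} Y$, and hence through AIR to $Y$. For the forward direction, given a basic $Z=(R,S)_f\in\opname{s\tau-tilt\,}_{(X,0)}A$, set $Y:=P(\mathcal{G}_Z)$; then by Theorem 5.1 the module $(X,0)\oplus(X_Y,Y)_{f'}$ is a basic support $\tau$-tilting $A$-module with the same $\opname{Fac}$ as $Z$, so by the injectivity of the AIR bijection it is isomorphic to $Z$. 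Comparing the second coordinates of the decomposition $(R,S)_f\cong (X\oplus X_Y, Y)$ would force $S\cong Y$, so the prescribed forward map $Z\mapsto S$ indeed coincides with $Z\mapsto P(\mathcal{G}_Z)$.

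The order-preserving property and the ``interval embedding'' conclusion are inherited from the component bijections; the top and bottom of the embedded interval are respectively the Bongartz completion of $(X,0)$ (Corollary 5.3(2)) and $(X,0)$ itself. The step I expect to be the principal technical obstacle is the identification $S\cong P(\mathcal{G}_Z)$ in the forward direction: it requires carefully combining the injectivity of the AIR bijection with the uniqueness of how an isomorphism of $A$-modules decomposes along the two coordinates of the triangular matrix algebra, and one must also verify that $(X,0)$ is genuinely $\tau$-rigid as an $A$-module so that the interval description from $\tau$-tilting theory applies — this last point follows from $X$ being $\tau$-tilting over $B$ and the standard description of $\tau$ for triangular matrix algebras.
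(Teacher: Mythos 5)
Your proposal is correct and follows essentially the same route as the paper: both realize $\mathrm{red}$ as the composite of the AIR interval bijection for the $\tau$-rigid module $(X,0)$, the torsion-class bijection of Theorem 5.5, and the AIR bijection for $C$, then match the resulting maps with the explicit formulas via Theorem 5.1. The only cosmetic difference is in the forward direction, where the paper computes $F(\opname{Fac}Z)=\opname{Fac}S$ directly from the decomposition $Z=(X,0)\oplus(R_1,S)_g$, while you recover $S$ by injectivity of the AIR bijection and coordinate comparison — both are valid.
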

\begin{proof}
By [AIR, Proposition 2.9], Theorem 2.2 and Theorem 5.5 we have a commutative diagram
$$\begin{xy}
(0,0)*+{\{\mathcal {T}\in \opname{f-\tors} A|\opname{Fac}(X, 0)\subseteq \mathcal {T}\subseteq ^{\bot}(\tau(X, 0))\}}="1",
(0,-20)*+{\opname{s\tau-tilt\,}_{(X, 0)}A}="2",
(60,0)*+{\opname{f-\tors} C}="3",
(60,-20)*+{\opname{s\tau-tilt\,}C}="4",
\ar"2";"1",\ar"1";"3",\ar"3";"4",\ar@{.>}"2";"4",
\end{xy}$$
in which each arrow is a bijection, and where the dashed arrow is given by $Z=(R, S)_{f} \mapsto P(F(\opname{Fac}Z))$ and the inverse is given by $Y \mapsto P(G(\opname{Fac}Y))$.

Hence to prove the theorem, we only need to show that
$P(F(\opname{Fac}Z))=S$ for any $Z=(R, S)_{f}\in\opname{s\tau-tilt\,}_{(X, 0)}A$ and
$P(G(\opname{Fac}Y))=(X, 0)\oplus (X_{Y}, Y)_{f}$ for any $Y\in \opname{s\tau-tilt\,}C$. By Theorem 5.5, we have $G(\opname{Fac}Y)=\{(R, S)_{g}|R\in \opname{Fac}X, S\in \opname{Fac}Y\}$. The corresponding support $\tau$-tilting $A$-module $P(G(\opname{Fac}Y))$ is $(X, 0)\oplus (X_{Y}, Y)_{f}$ by Theorem 5.1. Assume that $Z=(R, S)_{f}=(X, 0)\oplus (R_{1}, S)_{g}$, we have $\opname{Fac}Z=\opname{Fac}(X, 0)\oplus\opname{Fac}(R_{1}, S)_{g}=\{(U, 0)|U\in \opname{Fac}X\}\oplus \{(V, W)_{h}|V\in \opname{Fac}R_{1}, W\in\opname{Fac}S\}$.
By Theorem 5.5, we have $F(\opname{Fac}Z)=\opname{Fac}S$. It follows that $P(F(\opname{Fac}Z))=S$.
\end{proof}
\begin{corollary}
The bijection in Theorem 5.6 is compatible with mutation of support $\tau$-tilting modules.
\end{corollary}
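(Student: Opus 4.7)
The plan is to derive the mutation compatibility from the order-preserving nature of $\mathrm{red}$ established in Theorem 5.6, together with the well-known characterization of mutation of support $\tau$-tilting modules in terms of covering relations in the associated poset.

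First, I would recall from [AIR, Corollary 2.34] that for any finite-dimensional algebra $\Lambda$, two basic support $\tau$-tilting $\Lambda$-modules form a mutation pair if and only if they cover each other in the poset $(\opname{s\tau-tilt\,}\Lambda, \leq)$, where $T_1 \leq T_2$ iff $\opname{Fac}T_1 \subseteq \opname{Fac}T_2$. Applying this to both $A$ and $C$, it suffices to check that $\mathrm{red}$ preserves covering relations.

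By Theorem 5.6, $\mathrm{red}$ is an order-preserving bijection whose inverse $Y \mapsto (X, 0)\oplus (X_Y, Y)_f$ is also order-preserving; hence it is a poset isomorphism between $\opname{s\tau-tilt\,}_{(X,0)}A$ and $\opname{s\tau-tilt\,}C$. Any poset isomorphism automatically preserves covering relations. To see that these reflect actual mutations of support $\tau$-tilting $A$-modules (as opposed to covers within the restricted subposet that might not be covers in the ambient poset), I would invoke the ``in particular'' clause of Theorem 5.6 stating that $\opname{s\tau-tilt\,}_{(X,0)}A$ is an interval in $\opname{s\tau-tilt\,}A$. Thus, for $Z_1, Z_2 \in \opname{s\tau-tilt\,}_{(X,0)}A$, any $W \in \opname{s\tau-tilt\,}A$ with $Z_1 \leq W \leq Z_2$ automatically lies in $\opname{s\tau-tilt\,}_{(X,0)}A$, so covering relations inside the interval coincide with covering relations in the ambient poset. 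Combining the two facts yields the claim.

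The main obstacle is the interval property, which guarantees that an $A$-mutation of $Z \in \opname{s\tau-tilt\,}_{(X,0)}A$ at an indecomposable summand distinct from $(X, 0)$ remains in $\opname{s\tau-tilt\,}_{(X,0)}A$; but this has already been secured in Theorem 5.6. As an alternative, one could give a direct module-theoretic argument by writing $Z = (X, 0) \oplus (X_Y, Y)_f$ via Theorem 5.1, performing a left mutation at an indecomposable summand of $(X_Y, Y)_f$, and tracking the induced change of $Y$ through the minimal left $\opname{Fac}X$-approximation $f: Y\otimes_C M \longrightarrow X_Y$; this would explicitly exhibit $\mathrm{red}$ of the mutated module as a mutation of $Y$ in $\opname{s\tau-tilt\,}C$. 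The poset-theoretic route is shorter and avoids these computations.
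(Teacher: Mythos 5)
Your proposal is correct and follows essentially the same route as the paper: both invoke [AIR, Corollary 2.34] to identify mutations with covering relations and then use the order-preserving property of $\mathrm{red}$ from Theorem 5.6. You are in fact slightly more careful than the paper's (very terse) proof, since you explicitly note that the interval property of $\opname{s\tau-tilt\,}_{(X,0)}A$ is needed to ensure that covers inside the subposet are genuine covers in $\opname{s\tau-tilt\,}A$.
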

\begin{proof}
By [AIR, Corollary 2.34], the exchange graph of $\opname{s\tau-tilt\,}A$ coincides with the Hasse diagram of $\opname{s\tau-tilt\,}A$. Since $\tau$-tilting reduction preserves the partial order in $\opname{f-\tors}$ (and hence in $\opname{s\tau-tilt\,}$) the claim follows.
\end{proof}
For any finite-dimensional algebra $A$, $Q(\opname{s\tau-tilt\,}A)$ is the support $\tau$-tilting quiver of $A$, i.e., the set of vertices in $Q(\opname{s\tau-tilt\,}A)$ is $\opname{s\tau-tilt\,}A$ and the arrows in $Q(\opname{s\tau-tilt\,}A)$ describe left mutations (see [AIR, Definition 2.29]). We give an easy example to illustrate Theorem 5.6.
\begin{example}
Let $B$ be the path algebra of quiver $1\longrightarrow 2$ and $C$ the path algebra of quiver $\cdot 3$. Then the triangular matrix algebra $A$ is the path algebra of quiver $3\longrightarrow 1\longrightarrow 2$.

Let $X$ be $\begin{smallmatrix}1\\ 2\end{smallmatrix} \oplus\begin{smallmatrix} 1\end{smallmatrix}$ a $\tau$-tilting $B$-module. The Bongartz completion of $(X, 0)$ is given by $\begin{smallmatrix}1\\2\end{smallmatrix}\oplus \begin{smallmatrix}1\end{smallmatrix} \oplus\begin{smallmatrix}3\\1\\2\end{smallmatrix}$. It is easy to see that $Q(\opname{s\tau-tilt\,}C)$ is given by the quiver $3\longrightarrow 0$. By Theorem 5.6, we have that $\opname{s\tau-tilt\,}C$ can be embedded as an interval in $\opname{s\tau-tilt\,}A$. In the following picture, we indicate this embedding in $Q(\opname{s\tau-tilt\,}A)$ by enclosing the image of $\opname{s\tau-tilt\,}C$ marked in red. Actually the colored support $\tau$-tilting $A$-modules are exactly the gluing of $X$ and support $\tau$-tilting $C$-modules.
$$\begin{xy} (0,0)*+{ \begin{smallmatrix}1\\2\end{smallmatrix}\oplus \begin{smallmatrix}2\end{smallmatrix}\oplus \begin{smallmatrix}3\\1\\2\end{smallmatrix}}="1",
(-25,-15)*+{\begin{smallmatrix}3\end{smallmatrix}\oplus \begin{smallmatrix}2\end{smallmatrix}\oplus \begin{smallmatrix}3\\1\\2\end{smallmatrix}}="2",
(0,-15)*+{\color{red}\begin{smallmatrix}1\\2\end{smallmatrix}\oplus \begin{smallmatrix}1\end{smallmatrix} \oplus\begin{smallmatrix}3\\1\\2\end{smallmatrix}}="3",
(25,-15)*+{
\begin{smallmatrix}1\\2\end{smallmatrix}\oplus \begin{smallmatrix}2\end{smallmatrix}}="4",
(-43,-30)*+{
\begin{smallmatrix}3\end{smallmatrix}\oplus \begin{smallmatrix}2\end{smallmatrix}}="5",
(-25,-30)*+{
\begin{smallmatrix}3\end{smallmatrix}\oplus \begin{smallmatrix}3\\1\end{smallmatrix}\oplus \begin{smallmatrix}3\\1\\2\end{smallmatrix}}="6",
(0,-30)*+{
\begin{smallmatrix}3\\1\end{smallmatrix}\oplus \begin{smallmatrix}1\end{smallmatrix}\oplus \begin{smallmatrix}3\\1\\2\end{smallmatrix}}="7",
(25,-30)*+{\color{red}
\begin{smallmatrix}1\\2\end{smallmatrix}\oplus \begin{smallmatrix}1\end{smallmatrix}}="8",
(43,-30)*+{\begin{smallmatrix}2\end{smallmatrix}}="9",
(-25,-45)*+{
\begin{smallmatrix}3\end{smallmatrix}\oplus \begin{smallmatrix}3\\1\end{smallmatrix}}="10",
(0,-45)*+{
\begin{smallmatrix}3\\1\end{smallmatrix} \oplus\begin{smallmatrix}1\end{smallmatrix}}="11",
(25,-45)*+{ \begin{smallmatrix}1\end{smallmatrix}}="12",
(-25,-60)*+{ \begin{smallmatrix}3\end{smallmatrix}}="13",
(25,-60)*+{
\begin{smallmatrix}0\end{smallmatrix}}="14",
 \ar"1";"2",
\ar"1";"3", \ar"1";"4", \ar"2";"5", \ar"2";"6", \ar"3";"7",
\ar"3";"8", \ar"4";"8", \ar"4";"9", \ar"5";"13", \ar@/_1.7pc/"5";"9",
\ar"6";"10", \ar"7";"6", \ar"7";"11", \ar"8";"12",
\ar"9";"14",\ar"10";"13", \ar"11";"10", \ar"11";"12",
\ar"12";"14", \ar"13";"14",
\end{xy}$$

\end{example}

Now we compare our reduction (Theorem 5.6) with Jasso's reduction [J, Theorem 3.16]. Let $X$ be a $\tau$-tilting $B$-module. By Corollary 5.3(2), $(X, 0)$ is a $\tau$-rigid $A$-module and $T=(X, 0)\oplus (U, C)_{f}$ is the Bongartz completion of $(X, 0)$. Set
$$B'=\End_{A}T \quad\text{and}\quad C'=B'/\langle e\rangle ,$$
where $e$ is the idempotent corresponding to the projective $B'$-module $\Hom_{A}(T, (X, 0))$. By Jasso's reduction [J, Theorem 3.16], there is an order-preserving bijection $$\mathrm{red}: \opname{s\tau-tilt\,}_{(X, 0)}A
\longleftrightarrow \opname{s\tau-tilt\,}C'.$$

It is natural to ask if there is any relation between algebras $C$ and $C'$. In the following, we show that $C$ coincides with $C'$ in Jasso's reduction.
\begin{theorem}
With the above setting, $C$ is isomorphic to $C'$.
\end{theorem}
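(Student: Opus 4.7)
The plan is to construct an explicit surjective $k$-algebra homomorphism $\Phi:B'\to C$, show that its kernel coincides with $\langle e\rangle$, and conclude $C'\cong C$.

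The map $\Phi$ will be obtained by applying the exact functor $j_0=-\otimes_A Ae_2:\mod A\to\mod C$ to the endomorphism algebra of $T$. Since $j_0((X,0))=0$ and $j_0((U,C)_f)=C$, functoriality produces a $k$-algebra homomorphism $\Phi:\End_A(T)\to\End_C(C)\cong C$; concretely, writing $\phi\in B'$ as a matrix with respect to $T=(X,0)\oplus(U,C)_f$, its $(2,2)$-block has the form $(\alpha,\beta)$ satisfying $\alpha f=f(\beta\otimes\id_M)$, and $\Phi(\phi)=\beta$. Surjectivity of $\Phi$ will follow from the approximation property of $f$: for any $c\in C$, the composite $f\circ(\beta_c\otimes\id_M):M\to U$ lands in $\opname{Fac}X$, so it factors through $f$ and produces an $\alpha_c$ making $(\alpha_c,\beta_c)$ an endomorphism of $(U,C)_f$ mapping to $c$. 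Since $j_0((X,0))=0$ we have $\Phi(e)=0$, so $\Phi$ descends to a surjection $\bar\Phi:C'\twoheadrightarrow C$.

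Injectivity of $\bar\Phi$, i.e.\ $\ker\Phi\subseteq\langle e\rangle$, will reduce by a block-by-block comparison of matrices to the claim that every endomorphism $(\alpha,0):(U,C)_f\to(U,C)_f$ with $\alpha f=0$ factors through an object of $\add(X,0)$. Writing $\alpha=\bar\alpha\circ p$ with $p:U\twoheadrightarrow\opname{coker}f$ and $\bar\alpha:\opname{coker}f\to U$ gives the candidate factorization $(U,C)_f\xrightarrow{(p,0)}(\opname{coker}f,0)\xrightarrow{(\bar\alpha,0)}(U,C)_f$, so what remains is to establish $\opname{coker}f\in\add X$.

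This last step is the heart of the argument and the main technical obstacle. The plan is to deduce it from Wakamatsu's Lemma. Setting $K=\ker f$, the approximation property of $f$ forces every map $M\to V$ with $V\in\opname{Fac}X$ to vanish on $K$, so the induced monomorphism $\bar f:M/K\hookrightarrow U$ is still a left $\opname{Fac}X$-approximation; it remains left minimal because $f$ is. Wakamatsu's Lemma applied to the monic minimal left $\opname{Fac}X$-approximation $\bar f$ inside the extension-closed torsion class $\opname{Fac}X$ will yield $\Ext^1_B(\opname{coker}f,V)=0$ for all $V\in\opname{Fac}X$, making $\opname{coker}f$ Ext-projective in $\opname{Fac}X$. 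Since $\opname{coker}f$ is a quotient of $U\in\opname{Fac}X$ it already lies in $\opname{Fac}X$, and Theorem 2.2 applied to the $\tau$-tilting module $X$ identifies the Ext-projectives of $\opname{Fac}X$ with $\add X$. This gives $\opname{coker}f\in\add X$, completes the factorization claim, and thereby the proof that $\bar\Phi:C'\xrightarrow{\sim}C$.
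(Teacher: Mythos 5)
Your proposal is correct and follows essentially the same route as the paper: the homomorphism sending $(\alpha,\beta)\mapsto\beta$, surjectivity via the left $\opname{Fac}X$-approximation property of $f$, and injectivity by factoring $\alpha$ through $\opname{coker}f$ and showing $\opname{coker}f\in\add X$ via the dual of Wakamatsu's Lemma together with the Ext-projectivity characterization of $\add X$ for the $\tau$-tilting module $X$. The only difference is organizational (you work with $\End_A T$ and the idempotent $e$ directly rather than first passing to $\End_A(U,C)_f$ modulo maps factoring through $\add(X,0)$), which changes nothing of substance.
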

\begin{proof}
By the constructions of $B'$ and $C'$, we know that

$C'=\End_{A}(U, C)_{f}/\{$ endomorphisms of $(U, C)_{f}$ factoring through $\opname{add} (X, 0)\}.$

The endmorphisms of $(U, C)_{f}$ are  pairs of $(\alpha, \beta)$ such that the following diagram
$$\xymatrix{
& C\otimes _{C}M\ar[rr]^{f}\ar[d]_{\beta \otimes id_M} && U\ar[d]^{\alpha}&\\
& C\otimes _{C}M\ar[rr]^{f} && U&}$$
commutes, where $\alpha\in \End_{B}U$ and $\beta\in \End_{C}C$.

The pair $(\alpha, \beta)$ factors through $\opname{add} (X, 0)$ if and only if there exists $\gamma\in\Hom_{B}(U, X^{\oplus n})$ and $\delta\in\Hom_{B}(X^{\oplus n}, U)$ such that $(\alpha, \beta)=(\delta, 0)(\gamma, 0)$, i.e., the following diagram
$$\xymatrix{
& C\otimes _{C}M\ar[rr]^{f}\ar[d]_{0} && U\ar[d]^{\gamma}&\\
& 0\ar[rr]^{0}\ar[d]_{0} && X^{\oplus n} \ar[d]^{\delta}&\\
& C\otimes _{C}M\ar[rr]^{f} && U&}$$
commutes, where $\delta\gamma=\alpha$ and $\beta=0$.

Thus $C'=\End_{A}(U, C)_{f}/\{(\alpha, 0)|$ $\alpha$ factoring through $\opname{add}X\}.$ Now we define an algebra homomorphism $\theta: \End_{A}(U, C)_{f}\longrightarrow \End_{C}C\cong C$ via $(\alpha, \beta)\longmapsto \beta$.

We claim that $\theta$ is surjective. Let $\beta\in\End_{C}C$. Since $f$ is the left $\opname{Fac} X$-approximation, there exists $\alpha\in\End_{B}U$ such the following diagram
$$\xymatrix{
& C\otimes _{C}M\ar[rr]^{f}\ar[d]_{\beta \otimes id_M} && U\ar@{.>}[d]^{\alpha}&\\
& C\otimes _{C}M\ar[rr]^{f} && U&}$$
commutes. It follows that $(\alpha, \beta)\in\End_{A}(U, C)_{f}$ and $\theta((\alpha, \beta))=\beta$.

Observe that $\theta$ yields a surjective algebra homomorphism $\varphi: C'\longrightarrow C$. We only have to prove that $\varphi$ is injective.\\
(i) Assume that $(\alpha, \beta)\in \opname{ker}\varphi$. Then $\beta=0$. It follows that $\alpha f=0$ and hence $\alpha$ factors through $\opname{coker}f$.\\
(ii) $\opname{coker}f\in \opname{add}X$. By the dual of Wakamatsu's Lemma [AR, Lemma 1.3], we have $\Ext^{1}_{B}(\opname{coker}f, \opname{Fac}X)=0$. Thus $\opname{coker}f$ is an $\Ext$-projective object in $\opname{Fac}X$ and hence belongs to $\opname{add}X$ since $X$ is $\tau$-tilting.

We have finished to prove that $\varphi$ is an isomorphism.
\end{proof}

As another application of Theorem 5.5, we give the third main result of this section. We denote by $l(\alpha)$ the length of a maximal green sequence $\alpha$. By [KD, Theorem 5.3], for a finite-dimensional algebra $A$, torsion classes in a maximal green sequence are always functorially finite and the length of a maximal green sequence is exactly the length of the path starting at 0 in the Hasse quiver of functorially torsion classes of $\mod A$.
\begin{theorem}
If $\alpha: 0=\mathscr{X}_{0}\subsetneq \mathscr{X}_{1}\subsetneq \cdot \cdot \cdot \subsetneq \mathscr{X}_{r-1}\subsetneq \mathscr{X}_{r}=\mod B$ and $\beta: 0=\mathscr{Y}_{0}\subsetneq \mathscr{Y}_{1}\subsetneq \cdot \cdot \cdot \subsetneq \mathscr{Y}_{s-1}\subsetneq \mathscr{Y}_{s}=\mod C$ are respectively maximal green sequences in $\mod B$ and $\mod C$, then $\gamma: 0=\mathscr{Z}_{0}\subsetneq \mathscr{Z}_{1}\subsetneq \cdot \cdot \cdot \mathscr{Z}_{r}\subsetneq \mathscr{Z}_{r+1} \cdot \cdot \cdot\subsetneq \mathscr{Z}_{r+s-1}\subsetneq \mathscr{Z}_{r+s}=\mod A$ is a maximal green sequence in $\mod A$, where $\mathscr{Z}_{i}=\{(X,0)|X\in \mathscr{X}_{i}\}$ for $0\leq i\leq r$ and $\mathscr{Z}_{r+j}=\{(X,Y)_{f}|X\in \mod B, Y\in\mathscr{Y}_{j} \}$ for $1\leq j\leq s$. In particular, $l(\gamma)=l(\alpha)+l(\beta)$.
\end{theorem}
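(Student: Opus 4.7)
The plan is to verify, in order, that each $\mathscr{Z}_k$ is a torsion class in $\mod A$, that the chain starts at $0$ and ends at $\mod A$ with all inclusions strict, and that each consecutive inclusion is a covering relation; the length equality $l(\gamma) = l(\alpha) + l(\beta) = r+s$ will then follow by counting the steps. Functorial finiteness of each $\mathscr{Z}_k$ is given by Corollary 3.5: for $0 \leq i \leq r$ glue $(\mathscr{X}_i, 0)$, and for $1 \leq j \leq s$ glue $(\mod B, \mathscr{Y}_j)$. The endpoints $\mathscr{Z}_0 = 0$ and $\mathscr{Z}_{r+s} = \mod A$ are immediate from $\mathscr{X}_0 = 0$ and $\mathscr{Y}_s = \mod C$; strict inclusions within each of the two stretches are inherited from $\alpha$ and $\beta$, and the transition $\mathscr{Z}_r \subsetneq \mathscr{Z}_{r+1}$ is witnessed by $(0,Y)$ for any $0 \neq Y \in \mathscr{Y}_1$.

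For the covering relations in the \emph{first stretch} ($0 \leq i \leq r-1$), suppose $\mathscr{T}$ is a torsion class with $\mathscr{Z}_i \subseteq \mathscr{T} \subseteq \mathscr{Z}_{i+1}$. Every module in $\mathscr{T}$ has the form $(X,0)$, and since the embedding $(-,0) \colon \mod B \hookrightarrow \mod A$ is exact, the set $\mathscr{X} := \{X \in \mod B : (X,0) \in \mathscr{T}\}$ is a torsion class satisfying $\mathscr{X}_i \subseteq \mathscr{X} \subseteq \mathscr{X}_{i+1}$, with $\mathscr{T} = \{(X,0) : X \in \mathscr{X}\}$. The covering property of $\alpha$ forces $\mathscr{X} \in \{\mathscr{X}_i, \mathscr{X}_{i+1}\}$, and hence $\mathscr{T} \in \{\mathscr{Z}_i, \mathscr{Z}_{i+1}\}$.

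For the \emph{second stretch} ($0 \leq j \leq s-1$), let $\mathscr{T}$ be a torsion class between $\mathscr{Z}_{r+j}$ and $\mathscr{Z}_{r+j+1}$ and set $\mathscr{Y} := \{Y \in \mod C : (0,Y) \in \mathscr{T}\}$. Exactness of $(0,-) \colon \mod C \to \mod A$ makes $\mathscr{Y}$ a torsion class with $\mathscr{Y}_j \subseteq \mathscr{Y} \subseteq \mathscr{Y}_{j+1}$, and the covering property of $\beta$ gives $\mathscr{Y} \in \{\mathscr{Y}_j, \mathscr{Y}_{j+1}\}$. One then translates this back to $\mathscr{T}$ using the triangular-matrix short exact sequence
\[
0 \longrightarrow (X,0) \longrightarrow (X,Y)_f \longrightarrow (0,Y) \longrightarrow 0
\]
in $\mod A$: when $\mathscr{Y} = \mathscr{Y}_{j+1}$, any $(X,Y)_f \in \mathscr{Z}_{r+j+1}$ satisfies $(X,0) \in \mathscr{Z}_r \subseteq \mathscr{T}$ and $(0,Y) \in \mathscr{T}$, so closure of $\mathscr{T}$ under extensions yields $(X,Y)_f \in \mathscr{T}$ and $\mathscr{T} = \mathscr{Z}_{r+j+1}$; when $\mathscr{Y} = \mathscr{Y}_j$, any $(X,Y)_f \in \mathscr{T}$ surjects onto $(0,Y)$, so $Y \in \mathscr{Y}_j$ and hence $\mathscr{T} = \mathscr{Z}_{r+j}$.

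The main obstacle is precisely this last translation step: because the covering definition ranges over arbitrary torsion classes rather than only functorially finite ones, Theorem 5.5 (which establishes a bijection on functorially finite torsion classes via $X = B$, noting $\tau B = 0$ and $\opname{Fac}(B,0) = \mathscr{Z}_r$) cannot be invoked directly as a black box; one must instead exploit the displayed exact sequence to pass between torsion classes of $\mod A$ and their ``$C$-shadows.'' Once this is done, concatenating the coverings of both stretches exhibits $\gamma$ as a maximal green sequence, and a direct index count gives $l(\gamma) = r + s = l(\alpha) + l(\beta)$.
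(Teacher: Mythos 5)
Your proof is correct, and it takes a genuinely different (and more self-contained) route than the paper. The paper disposes of the first stretch with a one-line assertion and handles the second stretch by setting $X=B$ in Theorem 5.5 and appealing to the order-preserving bijection $G$ between $\{\mathcal{T}\in\opname{f-\tors}A\mid \mathscr{Z}_r\subseteq\mathcal{T}\}$ and $\opname{f-\tors}C$; since Definition 2.6 requires covering relations in the poset of \emph{all} torsion classes, the paper is implicitly relying on the fact (quoted from [DK, Theorem 5.3] just before the theorem) that covering relations among functorially finite torsion classes detect covering relations among arbitrary ones. You instead argue directly: given an arbitrary intermediate torsion class $\mathscr{T}$, you extract its $B$-shadow $\{X\mid (X,0)\in\mathscr{T}\}$ or $C$-shadow $\{Y\mid (0,Y)\in\mathscr{T}\}$, check these are torsion classes via exactness of the two embeddings, apply the covering property of $\alpha$ or $\beta$, and translate back using the short exact sequence $0\to(X,0)\to(X,Y)_f\to(0,Y)\to 0$ together with closure of $\mathscr{T}$ under extensions and quotients. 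This buys you an argument that works verbatim for arbitrary torsion classes and does not route through Theorem 5.5 or the [DK] input, at the cost of a longer case analysis; the paper's argument is shorter but leaves the functorial-finiteness issue and the first-stretch verification to the reader. Your correctly identifying that Theorem 5.5 ``cannot be invoked as a black box'' is precisely the point the paper's proof elides.
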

\begin{proof}
Since $\alpha$ is a maximal green sequence, $\mathscr{X}_{i}\subsetneq \mathscr{X}_{i+1}$ are all covering relations for $0\leq i\leq r-1$. It follows that $\mathscr{Z}_{i}\subsetneq \mathscr{Z}_{i+1}$ are covering relations for $0\leq i\leq r-1$. Since $\beta$ is a maximal green sequence, $\mathscr{Y}_{j}\subsetneq \mathscr{Y}_{j+1}$ are all covering relations for $0\leq j\leq s-1$. Let $X=B$ in Theorem 5.5, observe that the map $G$ is order-preserving, it follows that $\mathscr{Z}_{r+j}\subsetneq \mathscr{Z}_{r+j+1}$ are covering relations for $0\leq j\leq s-1$. Therefore $\mathscr{Z}_{i}\subsetneq \mathscr{Z}_{i+1}$ are all covering relations for $0\leq i\leq r+s-1$ and $\gamma$ is a maximal green sequence.
\end{proof}
By [DIRRT, Theorem 3.3], there is a unique brick associated with each covering relation of torsion classes which called brick labelling in [DIRRT, Definition 3.5]. Consequently, any maximal green sequence gives rise to a sequence of isomorphism classes of bricks. Keller and Demonet in [KD, Theorem 5.4] showed that maximal green sequences for finite dimensional algbras are parameterized by such sequences called complete forward hom-orthogonal sequences. A complete forward hom-orthogonal sequence for $A$ first mentioned by Igusa in [I, Definition 2.2] is a sequence of bricks $M_{1}, M_{2}, \cdot\cdot\cdot, M_{k}$ of $A$ where $\opname{Hom}_{A}(M_{i}, M_{j})=0$ for all $i\textless j $ that cannot be refined in such a way that preserve this condition. Here we also give a statement of Theorem 5.10 in language of bricks.
\begin{corollary}
If $(M_{1}, M_{2}, \cdot\cdot\cdot, M_{r})$ and $(N_{1}, N_{2}, \cdot\cdot\cdot, N_{s})$ are respectively complete forward hom-orthogonal sequences in $\mod B$ and $\mod C$, then $(M_{1}, M_{2}, \cdot\cdot\cdot, M_{r}, N_{1}, N_{2}, \cdot\cdot\cdot, N_{s})$ is a complete forward hom-orthogonal sequence in $\mod A$, where $M_{i}\cong (M_{i}, 0)$ can be regarded as an $A$-module for any $1\leq i\leq r$ and $N_{j}\cong (0, N_{j})$ can be regarded as an $A$-module for any $1\leq j\leq s$.
\end{corollary}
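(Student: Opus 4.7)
The plan is to derive the corollary by combining Theorem 5.10 with the Demonet--Keller bijection [DK, Theorem 5.4] between maximal green sequences and complete forward hom-orthogonal sequences. Since $(M_{1}, \ldots, M_{r})$ and $(N_{1}, \ldots, N_{s})$ are complete forward hom-orthogonal, [DK, Theorem 5.4] furnishes maximal green sequences $\alpha$ of $\mod B$ and $\beta$ of $\mod C$ whose associated sequence of brick labels is $(M_{i})$, respectively $(N_{j})$. Theorem 5.10 then glues them into a maximal green sequence $\gamma$ of $\mod A$, and the claim reduces to identifying the brick labels of $\gamma$ (in the sense of [DIRRT, Theorem 3.3]) with the concatenation $(M_{1}, 0), \ldots, (M_{r}, 0), (0, N_{1}), \ldots, (0, N_{s})$; once this is done, [DK, Theorem 5.4] applied to $\gamma$ yields the conclusion.

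Before reaching that identification, I would first verify that the proposed concatenation is itself a forward hom-orthogonal sequence of bricks in $\mod A$. Using the explicit description of morphisms between triples $(X, Y)_{f}$ recalled in Section 3, one has natural isomorphisms
$$\Hom_{A}((M, 0), (M', 0)) \cong \Hom_{B}(M, M'), \quad \Hom_{A}((0, N), (0, N')) \cong \Hom_{C}(N, N'),$$
together with $\Hom_{A}((M, 0), (0, N)) = 0$, the last vanishing since the $B$-component of any such morphism is a map $M \to 0$. Consequently each $(M_{i}, 0)$ and each $(0, N_{j})$ is a brick in $\mod A$, and the forward hom-orthogonality of the two original sequences transfers directly to the concatenation.

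For the identification of brick labels, I would argue covering-by-covering. For $0 \leq i \leq r-1$, the covering $\mathscr{Z}_{i} \subsetneq \mathscr{Z}_{i+1}$ is obtained from $\mathscr{X}_{i} \subsetneq \mathscr{X}_{i+1}$ under the exact fully faithful embedding $X \mapsto (X, 0)$, so its brick label is $(M_{i+1}, 0)$. For $0 \leq j \leq s-1$, the covering $\mathscr{Z}_{r+j} \subsetneq \mathscr{Z}_{r+j+1}$ corresponds, via the order-preserving bijection $G$ of Theorem 5.5 (taken with $X = B$), to $\mathscr{Y}_{j} \subsetneq \mathscr{Y}_{j+1}$; the candidate $(0, N_{j+1})$ clearly lies in $\mathscr{Z}_{r+j+1}$ but not in $\mathscr{Z}_{r+j}$, and applying the exact quotient functor $(X, Y)_{f} \mapsto Y$ from $\mod A$ to $\mod C$ sends it to the brick $N_{j+1}$ labelling the corresponding covering in $\beta$, which by functoriality of the brick-labelling of [DIRRT, Theorem 3.3] forces $(0, N_{j+1})$ to be the brick label of $\mathscr{Z}_{r+j} \subsetneq \mathscr{Z}_{r+j+1}$.

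The main obstacle lies in this last identification: brick labels are defined intrinsically through minimal extensions realising the covering relation, and one must rigorously transport them along the embeddings $\mod B \hookrightarrow \mod A$ and $\mod C \hookrightarrow \mod A$. The key technical point is the compatibility of the bijection $G$ of Theorem 5.5 with brick labels; this compatibility follows from the exactness of the second-coordinate functor $(X, Y)_{f} \mapsto Y$ together with the fact that, under $G$, the image of a torsion class in $\mod C$ is reconstructed by specifying the $C$-component freely, so that the ``new'' module realising the covering in $\mod A$ can be taken to have zero $B$-component.
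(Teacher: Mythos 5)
Your proposal is correct and follows essentially the same route as the paper: lift the two complete forward hom-orthogonal sequences to maximal green sequences via [DK, Theorem 5.4], glue them with Theorem 5.10, and identify the brick labels of the resulting covering relations with $(M_{i},0)$ and $(0,N_{j})$. The paper merely asserts this last identification, whereas you verify it cover by cover; your direct computation $\Hom_{A}((X,Y)_{f},(0,N))\cong\Hom_{C}(Y,N)$ already suffices there, without any appeal to ``functoriality'' of the brick labelling.
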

\begin{proof}
Assume that $(M_{1}, M_{2}, \cdot\cdot\cdot, M_{r})$ and $(N_{1}, N_{2}, \cdot\cdot\cdot, N_{s})$ respectively arise from maximal green sequences $0=\mathscr{X}_{0}\subsetneq \mathscr{X}_{1}\subsetneq \cdot \cdot \cdot \subsetneq \mathscr{X}_{r-1}\subsetneq \mathscr{X}_{r}=\mod B$ and $0=\mathscr{Y}_{0}\subsetneq \mathscr{Y}_{1}\subsetneq \cdot \cdot \cdot \subsetneq \mathscr{Y}_{s-1}\subsetneq \mathscr{Y}_{s}=\mod C$. Then by Theorem 5.10, $0=\mathscr{Z}_{0}\subsetneq \mathscr{Z}_{1}\subsetneq \cdot \cdot \cdot \mathscr{Z}_{r}\subsetneq \mathscr{Z}_{r+1} \cdot \cdot \cdot\subsetneq \mathscr{Z}_{r+s-1}\subsetneq \mathscr{Z}_{r+s}=\mod A$ is a maximal green sequence in $\mod A$, where $\mathscr{Z}_{i}=\{(X,0)|X\in \mathscr{X}_{i}\}$ for $0\leq i\leq r$ and $\mathscr{Z}_{r+j}=\{(X,Y)_{f}|X\in \mod B, Y\in\mathscr{Y}_{j} \}$ for $1\leq j\leq s$, and the corresponding sequence of bricks $(M_{1}, M_{2}, \cdot\cdot\cdot, M_{r}, N_{1}, N_{2}, \cdot\cdot\cdot, N_{s})$ is a complete forward hom-orthogonal sequence.
\end{proof}
By Proposition 2.7, maximal green sequences induce maximal paths in the Hasse quiver of support $\tau$-tilting poset. Actually Corollary 5.11 means that if we want to construct maximal paths in the support $\tau$-tilting quiver of $A$ from that of $B$ and $C$, starting from support $\tau$-tilting module 0 we should label $\opname{brick} B$ on the arrows of the maximal paths in $Q(\opname{s\tau-tilt\,}B)$ first then $\opname{brick} C$ on the arrows of the maximal paths in $Q(\opname{s\tau-tilt\,}C)$. We give examples to illustrate our result.
\begin{example}
Let $B$ be the path algebra of quiver $\cdot 1$ and $C$ the path algebra of quiver $\cdot 2$. Then the matrix algebra $A$ is the path algebra of quiver $1\longleftarrow 2$. In the middle line of Table 5, we construct a maximal path $(\begin{smallmatrix} 1\end{smallmatrix}\oplus\begin{smallmatrix}2\\ 1\end{smallmatrix}\longrightarrow 1\longrightarrow 0)$ in $Q(\opname{s\tau-tilt\,}A)$. The corresponding maximal green sequence and complete forward hom-orthogonal sequence are respectively $0\subsetneq \opname{add}(1)\subsetneq \mod A$ and $(1, 2)$.
\begin{table}[htbp]
  \caption{\label{tab:test}}
 \begin{tabular}{lcl}
  \toprule
 $ Q(\opname{s\tau-tilt\,}B) $\quad\quad\quad & $Q(\opname{s\tau-tilt\,}A)$ \quad\quad\quad & $Q(\opname{s\tau-tilt\,}C)$ \\
  \midrule
  \quad\quad $\begin{xy}
(0,0)*+{\begin{smallmatrix} 1\end{smallmatrix}}="1",
(0,-15)*+{\begin{smallmatrix} 0\end{smallmatrix}}="2",
\ar^{\color{red}1}"1";"2",
\end{xy}$ &
$\begin{xy}
(0,0)*+{\begin{smallmatrix} 2\end{smallmatrix}\oplus\begin{smallmatrix}2\\ 1\end{smallmatrix}}="1",
(0,-15)*+{\begin{smallmatrix} 2\end{smallmatrix}}="2",
(18,0)*+{\begin{smallmatrix} 1\end{smallmatrix}\oplus\begin{smallmatrix}2\\ 1\end{smallmatrix}}="3",
(36,0)*+{\begin{smallmatrix} 1\end{smallmatrix}}="4",
(36,-15)*+{\begin{smallmatrix} 0\end{smallmatrix}}="5",
\ar"1";"2",\ar"3";"1",\ar^{\color{red}2}"3";"4",\ar"2";"5",
\ar"4";"5"^{\color{red}1}
\end{xy}$ \quad\quad
   & \quad\quad
  $\begin{xy}
(0,0)*+{\begin{smallmatrix} 2\end{smallmatrix}}="1",
(0,-15)*+{\begin{smallmatrix} 0\end{smallmatrix}}="2",
\ar^{\color{red}2}"1";"2",
\end{xy}$ \\\\
  \bottomrule
 \end{tabular}
\end{table}

\end{example}

\begin{example}
Let $B$ be the path algebra of quiver $1\longrightarrow 2$ and $C$ the path algebra of quiver $\cdot 3$. Then the matrix algebra $A$ is the path algebra of quiver $3\longrightarrow 1\longrightarrow 2$. In the middle line of Table 6, we construct 2 maximal paths in $Q(\opname{s\tau-tilt\,}A)$.

One is
$$\begin{smallmatrix} 1\\2\end{smallmatrix}\oplus\begin{smallmatrix}2\end{smallmatrix}\oplus
\begin{smallmatrix}3\\1\\2\end{smallmatrix}\longrightarrow\begin{smallmatrix} 1\\2\end{smallmatrix}\oplus\begin{smallmatrix}2\end{smallmatrix}\longrightarrow\begin{smallmatrix} 1\\2\end{smallmatrix}\oplus\begin{smallmatrix}1\end{smallmatrix}\longrightarrow \begin{smallmatrix}1\end{smallmatrix}\longrightarrow 0,$$
the corresponding complete forward hom-orthogonal sequence and maximal green sequence are respectively $(1, \begin{smallmatrix} 1\\2\end{smallmatrix}, 2, 3)$ and
$$0\subsetneq \opname{add}(1)\subsetneq \opname{add}(\begin{smallmatrix} 1\\2\end{smallmatrix}\oplus 1)\subsetneq \opname{add}(2\oplus\begin{smallmatrix} 1\\2\end{smallmatrix}\oplus 1)\subsetneq \mod A.$$

The other is
$$\begin{smallmatrix} 1\\2\end{smallmatrix}\oplus\begin{smallmatrix}2\end{smallmatrix}\oplus
\begin{smallmatrix}3\\1\\2\end{smallmatrix}\longrightarrow\begin{smallmatrix} 1\\2\end{smallmatrix}\oplus\begin{smallmatrix}2\end{smallmatrix}\longrightarrow \begin{smallmatrix}2\end{smallmatrix}\longrightarrow 0,$$
the corresponding complete forward hom-orthogonal sequence and maximal green sequence are respectively $(2, 1, 3)$ and
$$0\subsetneq \opname{add}(2)\subsetneq \opname{add}(2\oplus\begin{smallmatrix} 1\\2\end{smallmatrix}\oplus 1)\subsetneq \mod A.$$
\begin{table}[htbp]
  \caption{\label{tab:test}}
 \begin{tabular}{lcl}
  \toprule
 $ Q(\opname{s\tau-tilt\,}B) $\quad\quad\quad & $Q(\opname{s\tau-tilt\,}A)$ \quad\quad\quad & $Q(\opname{s\tau-tilt\,}C)$ \\
  \midrule
   $\begin{xy}
(0,0)*+{\begin{smallmatrix}1\\ 2\end{smallmatrix}\oplus \begin{smallmatrix} 2\end{smallmatrix}}="1",
(0,-15)*+{\begin{smallmatrix} 0\end{smallmatrix}}="2",
(18,0)*+{\begin{smallmatrix}1\\ 2\end{smallmatrix} \oplus\begin{smallmatrix} 1\end{smallmatrix}}="3",
(18,-15)*+{\begin{smallmatrix} 1\end{smallmatrix}}="4",
\ar^{\color{red}2}"1";"3",\ar^{\color{red}\begin{smallmatrix}1\\ 2\end{smallmatrix}}"3";"4",
\ar^{\color{red}1}"4";"2"
\end{xy}$ &
$\begin{xy}
(0,0)*+{\begin{smallmatrix}1\\ 2\end{smallmatrix}\oplus\begin{smallmatrix} 2\end{smallmatrix} \oplus\begin{smallmatrix}3\\ 1\\2\end{smallmatrix}}="1",
(0,-20)*+{\begin{smallmatrix} 0\end{smallmatrix}}="2",
(25,0)*+{\begin{smallmatrix}1\\ 2\end{smallmatrix}\oplus\begin{smallmatrix} 2\end{smallmatrix}}="3",
(50,0)*+{\begin{smallmatrix}1\\ 2\end{smallmatrix}\oplus\begin{smallmatrix} 1\end{smallmatrix}}="4",
(50,-20)*+{\begin{smallmatrix} 1\end{smallmatrix}}="5",
\ar^{\color{red}3}"1";"3", \ar^{\color{red}2}"3";"4",\ar^{\color{red}1}"5";"2",
\ar^{\color{red}\begin{smallmatrix}1\\ 2\end{smallmatrix}}"4";"5"
\end{xy}$ \quad\quad
   & \quad\quad
  $\begin{xy}
(0,0)*+{\begin{smallmatrix} 3\end{smallmatrix}}="1",
(0,-15)*+{\begin{smallmatrix} 0\end{smallmatrix}}="2",
\ar^{\color{red}3}"1";"2",
\end{xy}$ \\\\
\midrule
$\begin{xy}
(0,0)*+{\begin{smallmatrix}1\\ 2\end{smallmatrix} \oplus\begin{smallmatrix} 2\end{smallmatrix}}="1",
(15,0)*+{\begin{smallmatrix} 2\end{smallmatrix}}="2",
(15,-15)*+{\begin{smallmatrix} 0\end{smallmatrix}}="3",
\ar^{\color{red}1}"1";"2",
\ar^{\color{red}2}"2";"3"
\end{xy}$ &
$\begin{xy}
(0,0)*+{\begin{smallmatrix}1\\ 2\end{smallmatrix}\oplus \begin{smallmatrix} 2\end{smallmatrix}\oplus\begin{smallmatrix}3\\1\\ 2\end{smallmatrix}}="1",
(0,-15)*+{\begin{smallmatrix} 0\end{smallmatrix}}="2",
(40,0)*+{\begin{smallmatrix}1\\ 2\end{smallmatrix}\oplus \begin{smallmatrix} 2\end{smallmatrix}}="3",
(40,-15)*+{\begin{smallmatrix} 2\end{smallmatrix}}="4",
\ar^{\color{red}3}"1";"3",\ar^{\color{red}1}"3";"4",
\ar^{\color{red}2}"4";"2"
\end{xy}$ \quad\quad
   & \quad\quad
  $\begin{xy}
(0,0)*+{\begin{smallmatrix} 3\end{smallmatrix}}="1",
(0,-15)*+{\begin{smallmatrix} 0\end{smallmatrix}}="2",
\ar^{\color{red}3}"1";"2",
\end{xy}$ \\\\
  \bottomrule
 \end{tabular}
\end{table}\\
\end{example}

\bigskip
{\bf Acknowledgement.} This work was supported by the NSFC (Grant No. 12201211) and the China Scholarship Council (CSC No. 202109710002). The author is grateful to Prof. Dong Yang for his many helpful discussions. She also would like to thank Prof. Xiaowu Chen and Prof. Nan Gao for their several discussions. She thanks Prof. Guodong Zhou for pointing out reference [FZ]. Thanks also go to Prof. Osamu Iyama for pointing out to the author that $\varphi$ in Theorem 5.9 is injective and Dr. Peigen Cao for suggesting maximal green sequences. She thanks the support and excellent working conditions at the University of Tokyo. Moreover, She thanks anonymous referees for useful comments.
\bigskip

\end{document}